\newtheorem{theorem}{Theorem}
\newtheorem{lemma}[theorem]{Lemma}
\newtheorem{proposition}[theorem]{Proposition}
\newtheorem{corollary}[theorem]{Corollary}
\theoremstyle{definition}
\newtheorem{definition}[theorem]{Definition}
\newtheorem{remark}[theorem]{Remark}
\newcommand{\id}{\,\mathrm{d}}
\providecommand{\keywords}
{
  \small	
  \textbf{\textit{Keywords:}}
}
\numberwithin{equation}{section}
\begin{document}

\title{On a singular limit of the Kobayashi--Warren--Carter energy}
\author[Y.~Giga]{Yoshikazu Giga}
\address[Y.~Giga]{Graduate School of Mathematical Sciences, The University of Tokyo,
3-8-1 Komaba, Meguro-ku, Tokyo 153-8914, Japan}
\email{labgiga@ms.u-tokyo.ac.jp}
\author[J.~Okamoto]{Jun Okamoto}
\address[J.~Okamoto]{Kyoto University Institute for the Advanced Study of Human Biology,
Yoshida-Konoe-cho, Sakyo-ku, Kyoto 606-8501, Japan}
\email{okamoto.jun.8n@kyoto-u.ac.jp}
\author[K.~Sakakibara]{Koya Sakakibara}
\address[K.~Sakakibara]{Department of Applied Mathematics, Faculty of Science, Okayama University of Science,
1-1 Ridaicho, Okayama-shi, Okayama 700-0005, Japan;
RIKEN iTHEMS,
2-1 Hirosawa, Wako-shi, Saitama 351-0198, Japan}
\email{ksakaki@ous.ac.jp}
\author[M.~Uesaka]{Masaaki Uesaka}
\address[M.~Uesaka]{Arithmer Inc.,
1-6-1 Roppongi, Minato-ku, Tokyo 106-6040, Japan;
Graduate School of Mathematical Sciences, The University of Tokyo,
3-8-1 Komaba, Meguro-ku, Tokyo 153-8914, Japan}
\email{masaaki.uesaka@arithmer.co.jp}
\keywords{Gamma limit, Modica--Mortola functional, Kobayashi--Warren--Carter energy, multi-dimensional domain.}
\subjclass[2010]{49J45, 82B26.}
\begin{abstract}
By introducing a new topology, a representation formula of the Gamma limit of the Kobayashi--Warren--Carter energy is given in a multi-dimensional domain.
 A key step is to study the Gamma limit of a single-well Modica--Mortola functional.
 The convergence introduced here is called the sliced graph convergence, which is finer than conventional $L^1$ convergence, and the problem is reduced to a one-dimensional setting by a slicing argument.
\end{abstract}
\maketitle

\tableofcontents

\section{Introduction} \label{S1} 

We consider the Kobayashi--Warren--Carter energy, which is a sum of a weighted total variation and a single-well Modica--Mortola energy.
 Their explicit forms are
\begin{align}
	E^\varepsilon_\mathrm{KWC}(u,v) &:= \int_\Omega \alpha(v)|Du| 
	+ E^\varepsilon_\mathrm{sMM}(v), \label{eq:E_KWC}\\
	{E^\varepsilon_\mathrm{sMM}(v)} &:= \frac\varepsilon2 \int_\Omega |\nabla v|^2 {\mathrm{d}\mathcal{L}^N}
	+ \frac{1}{2\varepsilon} \int_\Omega F(v){\mathrm{d}\mathcal{L}^N},\notag
\end{align}
where $\Omega$ is a bounded domain in $\mathbf{R}^N$ with the Lebesgue measure $\mathcal{L}^N$, $\alpha\ge0$, $\varepsilon>0$ is a small parameter, and $F$ is a single-well potential which takes its minimum at $v=1$.
 Typical examples of $\alpha$ and $F$ are $\alpha(v)=v^2$ and $F(v)=(v-1)^2$, respectively.
 These are the original choices in \cite{KWC1,KWC3}.
 The first term in \eqref{eq:E_KWC} is a weighted total variation with weight $\alpha(v)$.
 This energy was first introduced by {\cite{KWC1,KWC3}} to model motion of {grain boundaries of polycrystal} which have some structures like the averaged angle of each grain.
 This energy is quite popular in materials science.

We are interested in a singular limit of the Kobayashi--Warren--Carter energy $E^\varepsilon_\mathrm{KWC}$ as $\varepsilon$ tends to zero.
 If we assume boundedness of $E^\varepsilon_\mathrm{KWC}$ for a sequence $(u,v_\varepsilon)$ for fixed $u$, {then} $v_\varepsilon$ tends to a unique minimum of $F$ as $\varepsilon\to0$ in the $L^2$ sense.
 However, if $u$ has a jump discontinuity, its convergence is not uniform near such places, even in a one-dimensional setting, suggesting that we have to introduce a finer topology than $L^2$ or $L^1$ topology.
 In fact, in a one-dimensional setting, the notion of graph convergence of $v^\varepsilon$ to a set-valued function is introduced, and representations of Gamma limits of $E^\varepsilon_\mathrm{KWC}$ and $E^\varepsilon_\mathrm{sMM}$ are given in \cite{GOU}.

In this paper, we extend this one-dimensional results to a multi-dimensional setting.
 For this purpose, we introduce a new concept of convergence called sliced graph convergence.
 Roughly speaking, it requires graph convergence on each line.
 Under this convergence in $v_\varepsilon$ and the $L^1$-convergence in $u$, one is able to derive a representation formula for the Gamma limit of $E^\varepsilon_\mathrm{KWC}$ as $\varepsilon\to0$.
 It is
\begin{align*}
	E^0_\mathrm{KWC}(u,\Xi) &:= \alpha(1) \int_{\Omega\backslash J_u} |Du| 
	+ \int_{J_u} \min_{\xi^-\leq\xi\leq\xi^+} \alpha(\xi) \left|u^+ - u^- \right| {\mathrm{d}\mathcal{H}^{N-1}} + {E^0_\mathrm{sMM}(\Xi)}, \\
	{E^0_\mathrm{sMM}(\Xi)} &:= 2 \int_\Sigma \left\{ G(\xi^-) + G(\xi^+) \right\} {\mathrm{d}\mathcal{H}^{N-1}}\notag
\end{align*}
when $v_\varepsilon$ converges to a set-valued function $\Xi$ of form
\begin{equation*}
	\Xi(z) = 
	\begin{cases}
		\left[\xi^-(z),\xi^+(z)\right],&z\in\Sigma,\\
		\{1\},&z\not\in\Sigma,
	\end{cases}
\end{equation*}
where $\Sigma$ is a countably $N-1$ rectifiable set, and $\xi^\pm$ are {$\mathcal{H}^{N-1}$-measurable functions with $\xi^- \le 1 \le \xi^+$. Here}  $\mathcal{H}^{N-1}$ denotes the $N-1$ dimensional Hausdorff measure.
 The function $G$ is defined by
\[
	G(\sigma) := \left| \int^\sigma_1 \sqrt{F(\tau)} {\mathrm{d}\tau} \right|.
\]
The functions $u^+$ and $u^-$ denote upper and lower approximate limits in the measure-theoretic sense \cite{Fe}.

In the case $\alpha(v)=v^2$, we see
\[
	E^0_\mathrm{KWC}(u,\Xi) = \int_{\Omega\backslash J_u} |Du| 
	+ \int_{J_u\cap\Sigma} \left(\xi^-_+ \right)^2 \left|u^+ - u^- \right| {\mathrm{d}\mathcal{H}^{N-1}}
	+ {E^0_\mathrm{sMM}(\Xi)},
\]
where $a_+$ denotes the positive part of a function $a$, i.e., $a_+=\max(a,0)$.
 In \cite{GOU}, the case $\alpha(v)=v^2$ is discussed for a one-dimensional setting.
 (Unfortunately, $\xi^-_+$ has been misprinted as $\xi^-$ in \cite{GOU}.)
 When $F(v)=(v-1)^2$,
\[
	{E^0_\mathrm{sMM}(\Xi)} = \int_\Sigma \left\{ (\xi^- -1)^2 + (\xi^+ -1)^2 \right\} {\mathrm{d}\mathcal{H}^{N-1}}.
\]
In a one-dimensional setting, the results in \cite{GOU} gave a full characterization of the Gamma limit: the compactness result and the mere convergence result. 
 On the other hand, it is unclear what kind of set-valued functions should be considered {as} the limit of $v_\varepsilon$ in a multi-dimensional setting, assuming $E^\varepsilon_\mathrm{sMM}(v_\varepsilon)$ is bounded.
 A compactness result is still missing in a multi-dimensional setting.

The basic idea is to reduce a multi-dimensional setting to a one-dimensional setting by a slicing argument based on the following disintegration
\[
	\int_\Omega f(z) {\mathrm{d}\mathcal{L}^N}(z)
	= \int_{{\pi_\nu(\Omega_\nu)}} \left(\int_{\pi^{-1}_\nu(x)} f\id\mathcal{H}^1\right)\id\mathcal{L}^{N-1}(x), 
\]
where $\pi_\nu$ denotes the projection of $\mathbf{R}^N$ to the {subspace} orthogonal to a unit vector $\nu$, and $\Omega_\nu=\pi_\nu(\Omega)$.
 This idea is often used to study the singular limit of the Ambrosio--Tortorelli functional
\[
	\mathcal{E}^\varepsilon (u,v) = \int_\Omega v^2 |\nabla u|^2\id\mathcal{L}^N
	+ \lambda \int_\Omega (u-h)^2\id\mathcal{L}^N + {E^\varepsilon_\mathrm{sMM}(v)}, \quad \lambda \geq 0
\] 
as in \cite{AT,AT2,FL}, where $h$ is a given $L^2$ function and $F(v)=(v-1)^2$.
 This problem can be handled in $L^1$ topology, and its limit is known to be the Mumford--Shah functional
\[
	\mathcal{E}^0 (u,K) = \int_{\Omega\backslash K} |\nabla u|^2\id\mathcal{L}^N
	+ \mathcal{H}^{N-1}(K) + \lambda \int_\Omega (u-h)^2\id\mathcal{L}^N,
\]
where $K$ is a countably $N-1$ rectifiable set.
 In this case, in our language, it suffices to consider the case $\xi^-=0$, $\xi^+=1$ on $\Sigma=K$ so that
\[
	{\mathcal{E}^0_\mathrm{sMM}(\Xi) }= \mathcal{H}^{N-1}(K).
\]
In our case, however, as observed in the one-dimensional problem \cite{GOU}, it is reasonable to study non-constant $\xi^\pm$.
 Moreover, the fidelity term including $\lambda$ is also allowed in our case.

Our first main result is the Gamma-convergence of
\[
	{E^\varepsilon_\mathrm{sMM} (v)} + \int_J \alpha(v) j(y) {\mathrm{d}\mathcal{H}^{N-1}}(y)
\]
for a given countably rectifiable set $J$, where $j$ is an $ \mathcal{H}^{N-1}${-}integrable function on $J$.
This energy is a special case of $E^\varepsilon_\mathrm{KWC} (u,v)$ when $u$ has a jump in $J$ while it is constant outside $J$.
 To show liminf inequality, we decompose $\Sigma$ into a disjoint union of compact sets {$\{K_i\}_i$} lying in almost flat hypersurfaces.
 Then we reduce the problem in a one-dimensional setting like \cite{FL}.
 To show limsup inequality, we approximate $\xi^\pm$ so that they are constants in each $K_i$.
 This approximation procedure is quite involved because one should approximate not only energies but also approximate in the sliced graph topology.
 The basic choice of recovery sequences is similar to \cite{AT,FL}.

This paper's main result is the Gamma-convergence of the Kobayashi--Warren--Carter energy.
 The additional difficulty comes from the $\int\alpha(v)|Du|$ part, and this part can be carried out by decomposing the domain of integration into two parts: place close to $\Sigma$ of the limit $\Xi$ of $v_\varepsilon$, and outside such place.

The most difficult problem is how to choose a suitable topology for $v_\varepsilon$ to $\Xi$.
 We take a slice, a straight line passing through $x$ with direction $\nu$ for $\mathcal{L}^{N-1}$-almost every $x\in\pi_\nu(\Omega)$ for some directions $\nu$. 
 We need several concepts of set-valued functions to formulate the topology, including measurability, as discussed in \cite{AF}.

The compactness is missing for the convergence of $E^\varepsilon_\mathrm{KWC}$ to $E^0_\mathrm{KWC}$.
Therefore, we do not know whether a minimizer of $E^0_\mathrm{KWC}$ exists under suitable boundary conditions or a minimizer of energy like $E^0_\mathrm{KWC}+\lambda\int_\Omega(u-h)^2 d\mathcal{L}^N$ exists.
 If one minimizes $E^0_\mathrm{KWC}$ in the $\Xi$ variable, i.e.,
\[
	TV_\mathrm{KWC}(u) := \inf_{\Xi\in\mathcal{A}_0} E^0_\mathrm{KWC}(u,\Xi),
\]
this can be calculated as
\[
	TV_\mathrm{KWC}(u) = \int_\Sigma \sigma \left( |u^+ - u^-| \right) {\mathrm{d}\mathcal{H}^{N-1}} + \int_{\Omega\backslash J_u}|Du|
\]
with
\begin{align*}
	\sigma(r) & := \min_{\xi^-,\xi^+} \left\{ r\min_{\xi^-\leq\xi\leq\xi^+} \alpha(\xi) + 2 \left( G(\xi^-)+G(\xi^+) \right) \right\} \\
	& = \min_{\xi^-} \left\{ r\min_{\xi^-\leq\xi\leq1} \alpha(\xi) + 2G(\xi^-) \right\},\ r \geq 0
\end{align*}
if $\alpha(v)\geq\alpha(1)$ for $v\geq1$.
 This $\sigma$ is always concave.
 If $F(v)=(v-1)^2$, then
\[
	\sigma(r) = \min_{\xi^-} \left\{ r(\xi^-_+)^2 + (\xi^- - 1)^2 \right\} = \frac{r}{r+1}.
\]
In other words,
\[
	TV_\mathrm{KWC}(u) = \int_\Sigma \frac{|u^+ - u^-|}{1+|u^+ - u^-|}{\mathrm{d}\mathcal{H}^{N-1}}
	+ \int_{\Omega\backslash J_u}|Du|.
\]
This functional is a kind of total variation but has different aspects.
 For example, if $u$ is a piecewise constant monotone increasing function in a one-dimensional setting, the total variation $TV(u)=\int_\Omega|Du|$ equals $\sup u-\inf u$.
 This case is often called a staircase problem since $TV$ does not care about the number and size of jumps for monotone functions.
 In contrast to $TV$, the $TV_\mathrm{KWC}$ costs less if the number of jumps is smaller, provided that each jump is the same size and $\sup u-\inf u$ is the same.
 The energy like $TV_\mathrm{KWC}$ for a piecewise constant function is derived as the surface tension of grain boundaries in polycrystals \cite{LL}, which is an active area, as studied by \cite{GaSp}.

The Modica--Mortola functional is the sum of Dirichlet energy and potential energy.
 The Gamma limit problem was first studied in \cite{MM1}.
 Since then, there has been much literature studying the Gamma-convergence problems.
 If $F$ is a double-well potential, say $F(v)=(v^2-1)^2$, then the Modica--Mortola functional reads
\[
	E^\varepsilon_\mathrm{dMM}(v) = \frac{\varepsilon}{2} \int_\Omega |\nabla v|^2 {\mathrm{d}\mathcal{L}^N}
	+ \frac{1}{2\varepsilon} \int_\Omega (v^2-1)^2 {\mathrm{d}\mathcal{L}^N}.
\]
If $E^\varepsilon_\mathrm{dMM}(v_\varepsilon)$ is bounded, $v_\varepsilon(z)$ converges to either $1$ or $-1$ for $\mathcal{L}^N$-almost all $z\in\Omega$ by taking a subsequence.
 The interface between two states, $\{\lim v_\varepsilon=1\}$ {and} $\{\lim v_\varepsilon=-1\}$, is called a transition interface.
 In a one-dimensional setting, its Gamma limit is considered in $L^1$ topology and is characterized by the number of transition points \cite{MM2}.
 This result is extended to a multi-dimensional setting in \cite{M,St}, and the Gamma limit is a constant multiple of the surface area of the transition interface.
 However, the topology of convergence of $v_\varepsilon$ is either {in} $L^1$ {topology or in measure} (including almost everywhere convergence).
 If we consider its Gamma limit in the sliced graph convergence, we expect that the limit equals
\[
	E^0_\mathrm{dMM}(\Xi) = 2 \int_\Sigma \left\{G_-(\xi^-)+G_+(\xi^+) \right\} {\mathrm{d}\mathcal{H}^{N-1}}
	+ G_-(1)\mathcal{H}^{N-1} (\Sigma)
\]
for
\begin{equation*}
	\Xi(z) := \left \{
	\begin{array}{ll}
	\left[ \xi^-(z), \xi^+(z) \right], &{\text{for}}\ z \in \Sigma, \\
	\text{either}\ 1\ \text{or}\ -1, &{\text{otherwise}},
	\end{array}
	\right.
\end{equation*}
where $[-1,1]\subset[\xi^-,\xi^+]$.
 Here, $G_\pm$ is defined as
\[
	G_\pm(\sigma) = \left| \int^\sigma_{\pm 1} \sqrt{F(\tau)}{\mathrm{d}\tau} \right|.
\]
The first term in $E_{\mathrm{dMM}}^0(\Xi)$ is invisible in $L^1$ convergence, while the second term is the Gamma limit of $E_\mathrm{dMM}$ in the $L^1$ sense.
 We do not give proof in this paper.
 If compactness is available, the Gamma-convergence yields the convergence of a local minimizer and the global minimizer.
 For $L^1$ convergence, based on this strategy, the convergence of a local minimizer has been established in \cite{KS} when the limit is a strict local minimizer.
 The convergence of critical points is outside the framework of a general theory and should be discussed separately as in \cite{HT}.
 In recent years, the Gamma limit of the double-well Modica--Mortola function with spatial inhomogeneity has been studied from a homogenization point of view (see, e.g.\ \cite{CFHP1}, \cite{CFHP2}) but still under $L^1$ or convergence in measure.

The Mumford--Shah functional $\mathcal{E}^0$ is difficult to handle because one of the variables is a set $K$.
 This is the motivation for introducing $\mathcal{E}^\varepsilon$, called the Ambrosio--Tortorelli functional, to approximate $\mathcal{E}^0$ in \cite{AT}.
 The Gamma limit of $\mathcal{E}^\varepsilon$ is by now well studied \cite{AT,AT2}, and with weights \cite{FL}.
 The convergence of critical parts is studied in \cite{FLS} in a one-dimensional setting; the higher-dimensional case was studied quite recently by \cite{BMR} by adjusting the idea of \cite{LSt}.
 The Ambrosio--Tortorelli approximation is now used in various problems, including the decomposition of brittle fractures \cite{FMa} and the Steiner problem \cite{LS,BLM}.
 However, in all these works, the energy for $u$ is a $v$-weighted Dirichlet energy, not $v$-weighted total variation energy.

A singular limit of the gradient flow of the double-well Modica--Mortola flow is well studied.
 The sharp interface limit, i.e., $\varepsilon\to0$ yields the mean curvature flow of an interface.
 For an early stage of development, see \cite{BL,XC,MSch}, on convergence to a smooth mean curvature flow and \cite{ESS} on convergence to a level-set mean curvature flow \cite{G}.
 For more recent studies, see, for example, \cite{AHM,To}.

The gradient flow of the Kobayashi--Warren--Carter energy $E^\varepsilon_\mathrm{KWC}$ is proposed in  \cite{KWC1} (see also  \cite{KWC2,KWC3}) to model grain boundary motion when each grain has some structure.
 Its explicit form is
\begin{align*}
	\tau_1 v_t &= s \Delta v + (1-v) - 2sv |\nabla v|, \\
	\tau_0 v^2 u_t &= s \operatorname{div} \left( v^2\frac{\nabla u}{|\nabla u|}\right),
\end{align*}
where $\tau_0$, $\tau_1$, and $s$ are positive parameters.
 This system is regarded as the gradient flow of $E^\varepsilon_\mathrm{KWC}$ with $F(v)=(v-1)^2$, $\varepsilon=1$, {and} $\alpha(v)=v^2$.
 Because of the presence of the singular term $\nabla u/|\nabla u|$, the meaning of the solution itself is non-trivial since, even if $v\equiv1$, the flow is the total variation flow, and a non-local quantity determines the speed \cite{KG}.
 At this moment, the well-posedness of its initial-value problem is an open question.
 If the second equation is replaced by
\[
	\tau_0 (v^2+{\delta}) u_t = s \operatorname{div} \left( (v^2+\delta') \nabla u/|\nabla u| + \mu\nabla u \right)
\]
with $\delta>0$, $\delta'\geq0$ and $\mu\geq0$ satisfying $\delta'+\mu>0$, the existence and large-time behavior of solutions are established in \cite{IKY,MoSh,MoShW1,SWat,SWY,WSh} under several homogeneous boundary conditions.
 However, its uniqueness is only proved in a one-dimensional setting under $\mu>0$ \cite[Theorem 2.2]{IKY}.
 These results can be extended to the cases of non-homogeneous boundary conditions.
 Under non-homogeneous Dirichlet boundary conditions, we are able to find various structural patterns of steady states; see \cite{MoShW2}.

The singular limit of the gradient flow of $E^\varepsilon_\mathrm{KWC}$ is not known even if $\alpha(v)=v^2+\delta'$, $\delta'>0$.
 In \cite{ELM}, a gradient flow of
\[
	E(u,\Sigma) = \int_\Sigma \sigma \left(\left|u^+-u^-\right|\right){\mathrm{d}\mathcal{H}^{N-1}}, \quad N=2
\]
is studied.
 Here $u$ is a piecewise constant function outside a union $\Sigma$ of smooth curves, including triple junction, and $\sigma$ is a given non-negative function.
 Our $TV_\mathrm{KWC}$ is a typical example.
 They take variation of $E$ not only $u$ but also of $\Sigma$ and derive a weighted curvature flow with evolutions of boundary values of $u$ together with motion of triple junction.
 It is not clear that the singular limit of the gradient flow of $E^\varepsilon_\mathrm{KWC}$ gives this flow since, in the total variation flow, the variation is taken only in the direction of $u$ and does not include domain variation, which is the source of the mean curvature flow.

This paper is organized as follows.
 In Section \ref{SSGC}, we introduce the notion of sliced graph convergence.
 In Section \ref{SLSC}, we discuss the liminf inequality of the singular limit of $E^\varepsilon_\mathrm{sMM}$ with an additional term under the sliced graph convergence.
 In Section \ref{SCRS}, we discuss the limsup inequality by constructing recovery sequences.
 In Section \ref{SLKWC}, we discuss the singular limit of $E^\varepsilon_\mathrm{KWC}$.

{The results of this paper are based on the thesis \cite{O} of the second author.}
\section{Sliced graph convergence} \label{SSGC} 

In this section, we introduce the notion of sliced graph convergence.
We first recall a few basic notions of a set-valued function, especially on the measurability.
Consequently, we review the notion of the slicing argument and introduce the concept of sliced graph convergence.
\subsection{A set-valued function and its measurability}
We first recall a few basic notions of a set-valued function; see \cite{AF}.
 Let $M$ be a Borel set in $\mathbf{R}^d$ and $\Gamma$ be a set-valued function on $M$ with values in $2^{\mathbf{R}^m}\backslash\{\emptyset\}$ such that $\Gamma(z)$ is closed in $\mathbf{R}^m$ for all $z\in M$.
 We say that such $\Gamma$ is a closed set-valued function.
 We say that $\Gamma$ is \emph{Borel measurable} if $\Gamma^{-1}(U)$ is a Borel set whenever $U$ is an open set in $\mathbf{R}^m$.
 Here, the inverse $\Gamma^{-1}(U)$ is defined as
\[
	\Gamma^{-1}(U) := \left\{ z \in M \bigm|
	\Gamma(z) \cap U \neq \emptyset \right\}.
\]
Similarly, we say that $\Gamma$ is \emph{Lebesgue measurable} if $\Gamma^{-1}(U)$ is Lebesgue measurable whenever $U$ is an open set.

 Assume that $M$ is closed.
 We say that $\Gamma$ is \emph{upper semicontinuous} if $\operatorname{graph}\Gamma$ is closed in $M\times\mathbf{R}^m$, where
\[
	\operatorname{graph}\Gamma := \left\{ z=(x,y) \in M \times \mathbf{R}^m \bigm|
	y \in \Gamma({x}),\ x \in M \right\}.
\]
If $\Gamma$ is upper semicontinuous, $\Gamma$ is Borel measurable \cite{AF}.

 Assume that $M$ is compact.
 Then, $\operatorname{graph}\Gamma$ is compact if it is closed.
 We set
\[
	\mathcal{C} = \left\{ \Gamma \mid
	\operatorname{graph}\Gamma\ \text{is compact in}\ M\times \mathbf{R}^m\ \text{and}\ \Gamma(x)\neq\emptyset \ \text{for}\ {x} \in M\right\}.
\]
For $\Gamma_1, \Gamma_2 \in \mathcal{C}$, we set
\[
	d_g(\Gamma_1, \Gamma_2) := d_H(\operatorname{graph}\Gamma_1, \operatorname{graph}\Gamma_2),
\]
where $d_H$ denotes the Hausdorff distance of two sets in $M\times\mathbf{R}^m$, defined by
\[
	d_H(A,B) := \max \left\{ \sup_{x \in A} \operatorname{dist}(z,B), \sup_{w \in B} \operatorname{dist}(w,A) \right\}
\]
for $A,B \subset M\times\mathbf{R}^m$, and 
\[
	\operatorname{dist}(z,B) := \inf_{w \in B} \operatorname{dist}(z,w),\quad
	\operatorname{dist}(z,w) = |z-w|,
\]
where $|\cdot|$ denotes the Euclidean norm in $\mathbf{R}^d\times\mathbf{R}^m$.

We recall a fundamental property of a Borel measurable set-valued function \cite[Theorem 8.1.4]{AF}.
\begin{theorem} \label{MBA}
Let $\Gamma$ be a closed set-valued function on a Borel set $M$ in $\mathbf{R}^d$ with values in $2^{\mathbf{R}^m}\backslash\{\emptyset\}$.
 The following three statements are equivalent:
\begin{enumerate}
\item[(i)] $\Gamma$ is Borel (resp.\ Lebesgue) measurable.
\item[(i\hspace{-1pt}i)] $\operatorname{graph}\Gamma$ is a Borel set ($\mathbf{M}\otimes\mathbf{B}$ measurable set) in $M\times\mathbf{R}^m$.
\item[(i\hspace{-1pt}i\hspace{-1pt}i)] There is a sequence of Borel (Lebesgue) measurable functions $\{f_j\}^\infty_{j=1}$ such that
\[
	\Gamma(z) = \overline{\left\{f_j(z)\bigm| j=1,2,\ldots\right\}}.
\]
\end{enumerate}
Here $\mathbf{M}$ denotes the $\sigma$-algebra of Lebesgue measurable sets in $M$ and $\mathbf{B}$ denotes the $\sigma$-algebra of Borel sets in $\mathbf{R}^m$.
\end{theorem}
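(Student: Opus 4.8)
The plan is to prove the cycle of implications (i) $\Rightarrow$ (iii) $\Rightarrow$ (ii) $\Rightarrow$ (i), which gives all three equivalences at once, treating the Borel and the Lebesgue versions in parallel. Below, ``measurable'' refers to whichever $\sigma$-algebra $\mathbf{A}$ on $M$ is in force (the Borel sets, or $\mathbf{M}$), so that on $M\times\mathbf{R}^m$ the relevant product $\sigma$-algebra is $\mathbf{A}\otimes\mathbf{B}$.

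Two of the three links are elementary and I would dispatch them first. For (iii) $\Rightarrow$ (ii), closedness of the values gives that $y\in\Gamma(z)$ if and only if $\inf_j|y-f_j(z)|=0$, whence
\[
	\operatorname{graph}\Gamma
	=\bigcap_{n=1}^{\infty}\bigcup_{j=1}^{\infty}
	\bigl\{ (z,y)\in M\times\mathbf{R}^m \bigm| |y-f_j(z)|<1/n \bigr\};
\]
each set on the right is the preimage of $[0,1/n)$ under the $\mathbf{A}\otimes\mathbf{B}$-measurable map $(z,y)\mapsto|y-f_j(z)|$, and hence lies in $\mathbf{A}\otimes\mathbf{B}$. (The same representation makes (iii) $\Rightarrow$ (i) immediate, since for open $U$ one has $\Gamma^{-1}(U)=\bigcup_j f_j^{-1}(U)$, a countable union of measurable sets.)

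For (i) $\Rightarrow$ (iii) I would run the Castaing construction. Fix a countable dense set $\{q_k\}_k$ in $\mathbf{R}^m$ and, for $k,n\in\mathbf{N}$, set $D_{k,n}:=\Gamma^{-1}(B(q_k,1/n))$, which is measurable by (i). On $D_{k,n}$ consider the closed-valued multifunction $\Lambda_{k,n}(z):=\overline{\Gamma(z)\cap B(q_k,1/n)}$, which is nonempty-valued there by definition of $D_{k,n}$; since $\Lambda_{k,n}^{-1}(V)=\Gamma^{-1}(B(q_k,1/n)\cap V)$ for every open $V$, it is measurable, and the measurable selection theorem for closed-valued measurable multifunctions into a Polish space \cite{AF} produces a measurable selection $f_{k,n}\colon D_{k,n}\to\mathbf{R}^m$. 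Fixing in addition a measurable selection $f_0$ of $\Gamma$ on all of $M$ and extending each $f_{k,n}$ by $f_0$ outside $D_{k,n}$ yields a countable family $\{f_0\}\cup\{f_{k,n}\}_{k,n}$ of measurable selections of $\Gamma$. Given $z\in M$, $y\in\Gamma(z)$ and $\eta>0$, choose $n$ with $2/n<\eta$ and then $q_k$ with $|y-q_k|<1/n$; then $z\in D_{k,n}$ and $f_{k,n}(z)\in\overline{B}(q_k,1/n)\cap\Gamma(z)$, so $|f_{k,n}(z)-y|<\eta$. Combined with closedness of $\Gamma(z)$, this gives $\Gamma(z)=\overline{\{f_0(z)\}\cup\{f_{k,n}(z)\}_{k,n}}$, which is (iii).

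Finally, for (ii) $\Rightarrow$ (i), let $U\subset\mathbf{R}^m$ be open and let $\pi_M$ be the projection of $M\times\mathbf{R}^m$ onto $M$; then $\Gamma^{-1}(U)=\pi_M(\operatorname{graph}\Gamma\cap(M\times U))$ and the set being projected belongs to $\mathbf{A}\otimes\mathbf{B}$ by (ii). In the Lebesgue case the measurable projection theorem (Aumann, von Neumann) shows this projection is $\mathbf{M}$-measurable, which is (i). In the Borel case one uses in addition that each section $\Gamma(z)$ is $\sigma$-compact, being closed in $\mathbf{R}^m$; by the Arsenin--Kunugui theorem the projection of a Borel set with $\sigma$-compact sections is again Borel, so $\Gamma^{-1}(U)$ is Borel (see \cite[Theorem~8.1.4]{AF}). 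I expect this projection step to be the only substantive point: it rests on descriptive-set-theoretic facts about projections of product-measurable sets, while everything else reduces to bookkeeping once the measurable selection theorem is in hand.
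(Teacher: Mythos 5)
Your argument is correct, but note that the paper does not prove Theorem~\ref{MBA} at all: it is quoted verbatim from Aubin--Frankowska \cite[Theorem 8.1.4]{AF} as a known fact, so there is no in-paper proof to compare against. What you have written is a self-contained and essentially standard proof of that quoted result, and each link checks out: the identity $\operatorname{graph}\Gamma=\bigcap_n\bigcup_j\{|y-f_j(z)|<1/n\}$ for (iii)~$\Rightarrow$~(ii), the identity $\Gamma^{-1}(U)=\bigcup_j f_j^{-1}(U)$ for (iii)~$\Rightarrow$~(i), the Castaing construction via the Kuratowski--Ryll-Nardzewski selection theorem for (i)~$\Rightarrow$~(iii) (your verification that $\Lambda_{k,n}^{-1}(V)=\Gamma^{-1}(B(q_k,1/n)\cap V)$ is the right observation, using that $V$ is open), and the projection argument for (ii)~$\Rightarrow$~(i). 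You correctly identify the projection step as the only place where nontrivial descriptive set theory enters: in the Lebesgue case the measurable projection theorem (valid because Lebesgue measure is complete and $\sigma$-finite), and in the Borel case the Arsenin--Kunugui theorem, which applies because the sections $\Gamma(z)\cap U$ are $\sigma$-compact (closed sets in $\mathbf{R}^m$ are $\sigma$-compact and their intersection with an open set is a countable union of compacta). The one cosmetic flaw is that in the last step you cite \cite[Theorem~8.1.4]{AF} --- the very statement being proved --- as the source for the Arsenin--Kunugui argument; you should instead cite a descriptive-set-theory reference for that theorem (e.g.\ Kechris, \emph{Classical Descriptive Set Theory}, Theorem~35.46, or Srivastava, \emph{A Course on Borel Sets}). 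With that citation repaired, the proof stands on its own and is, if anything, more explicit about the Borel/Lebesgue dichotomy than the reference the paper relies on.
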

\subsection{The definition of the sliced graph convergence}
We next recall the notation often used in the slicing argument \cite{FL}.
 Let $S$ be a set in $\mathbf{R}^N$.
 Let $S^{N-1}$ denote the unit sphere in $\mathbf{R}^N$ centered at the origin, i.e.,
\[
	S^{N-1} = \left\{ \nu \in \mathbf{R}^N \bigm|
	|\nu| = 1 \right\}.
\]
For a given $\nu$, let $\Pi_\nu$ denote the hyperplane whose normal equals $\nu$.
 In other words,
\[
	\Pi_\nu := \left\{ x \in \mathbf{R}^N \bigm|
	\langle x,\nu \rangle = 0 \right\},
\]
where $\langle \ ,\ \rangle$ denotes the standard inner product in $\mathbf{R}^N$.
 For $x\in\Pi_\nu$, let $S_{x,\nu}$ denote the intersection of {$S$} and the whole line with direction $\nu$, which contains $x$; that is,
\[
	S_{x,\nu} := \left\{ x + t \nu \bigm|
	t \in S^1_{x,\nu} \right\},
\]
where
\[
	S^1_{x,\nu} := \left\{ t \in \mathbf{R} \bigm|
	x + t\nu \in S \right\} \subset \mathbf{R}.
\]
We also set
\[
	S_\nu := \left\{ x 	\in \Pi_\nu \bigm|
	S_{x,\nu} \neq \emptyset \right\}.
\]
See Figure \ref{FSC}.
\begin{figure}[htb]
\centering 
\includegraphics[width=5cm]{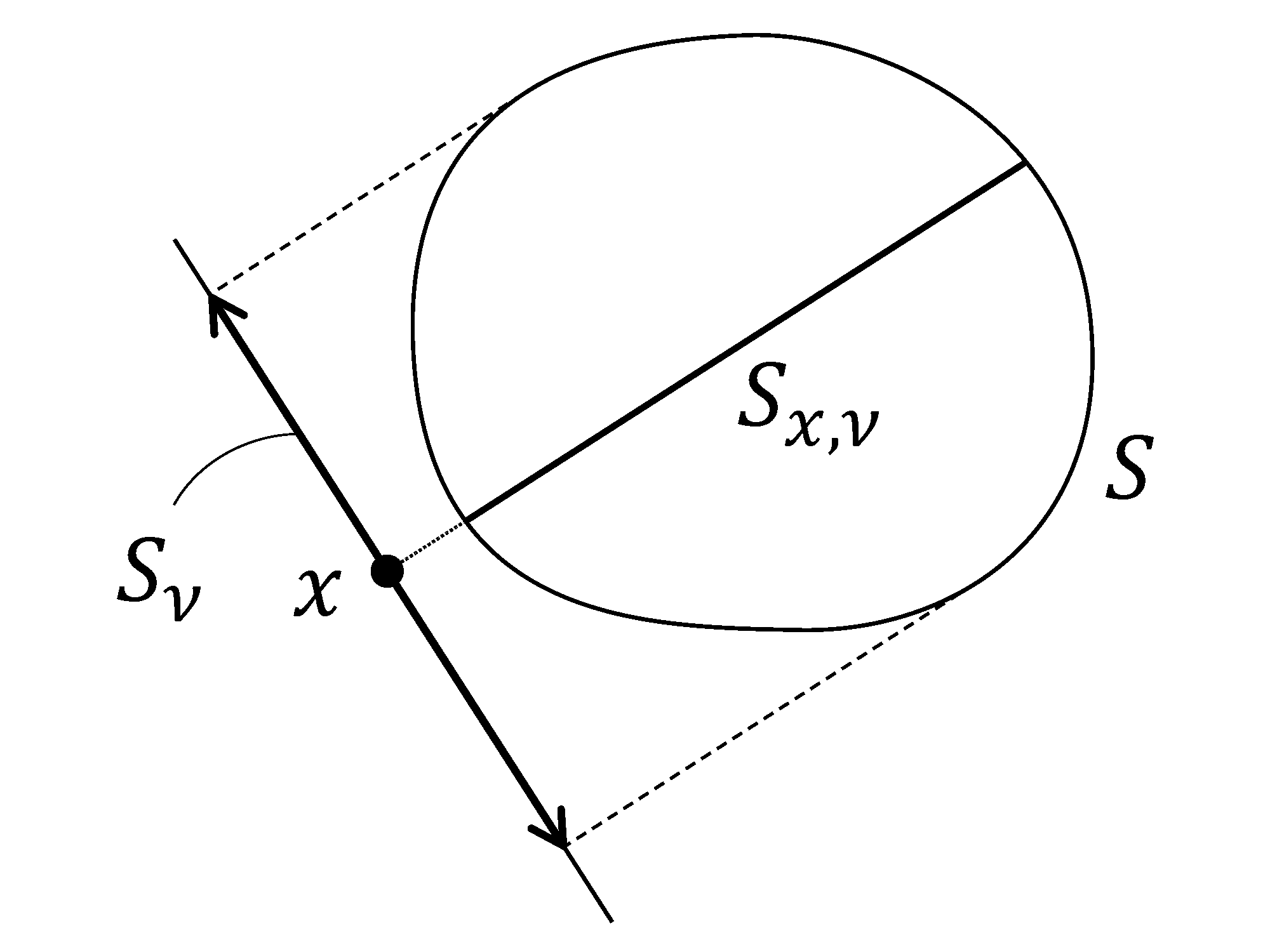}
\caption{Slicing} \label{FSC} 
\end{figure}
 For a given function $f$ on $S$, we associate it with a function $f_{x,\nu}$ on $S^1_{x,\nu}$ defined by
\[
	f_{x,\nu}(t) := f(x + t \nu).
\]

%
Let $\Omega$ be a bounded domain in $\mathbf{R}^N$, and $\mathcal{T}$ denote the set of all Lebesgue measurable (closed) set-valued function $\Gamma:\Omega\to2^\mathbf{R}$.
For $\nu \in S^{N-1}$, we consider $\Omega^1_{x,\nu}\subset\mathbf{R}$ and the (sliced) set-valued function $\Gamma_{x,\nu}$ on $\Omega^1_{x,\nu}$ defined by $\Gamma_{x,\nu}(t)=\Gamma(x+t\nu)$.
 Let $\overline{\Gamma_{x,\nu}}$ denote its closure defined on the closure of $\overline{\Omega^1_{x,\nu}}$.
 Namely, it is uniquely determined so that the graph of $\overline{\Gamma_{x,\nu}}$ equals the closure of $\operatorname{graph}\Gamma_{x,\nu}$ in $\mathbf{R}\times\mathbf{R}$.
 As with usual measurable functions, $\Gamma^{(1)}$ and $\Gamma^{(2)}$ belonging to $\mathcal{T}$ are identified if $\Gamma^{(1)}(z)=\Gamma^{(2)}(z)$ for $\mathcal{L}^N$-a.e.\ $z\in\Omega$.
 By Fubini's theorem,  $\Gamma^{(1)}_{x,\nu}(t)=\Gamma^{(2)}_{x,\nu}(t)$ for $\mathcal{L}^1$-a.e.\ $t$ for $\mathcal{L}^{N-1}$-a.e.\ $x\in\Omega_\nu$.
 With this identification, we consider its equivalence class, and we call each $\Gamma^{(1)}$, $\Gamma^{(2)}$ a representative of this equivalence class.
 For $\nu\in S^{N-1}$, we define the subset $\mathcal{B}_\nu \subset \mathcal{T}$ as follows: $\Gamma \in \mathcal{B}_\nu$ if, for a.e.\ $x\in\Omega_\nu$,
 \begin{itemize}
     \item There is a representative of $\Gamma_{x,\nu}$ such that $\overline{\Gamma_{x,\nu}} = \Gamma_{x,\nu}$ on $\Omega^1_{x,\nu}$;
     \item $\operatorname{graph}\overline{\Gamma_{x,\nu}}$ is compact in $\overline{\Omega^1_{x,\nu}}\times\mathbf{R}$.
 \end{itemize}

We note that if $\Gamma^{(1)},\Gamma^{(2)}\in\mathcal{B}_\nu$, then $\overline{\Gamma^{(1)}_{x,\nu}},\overline{\Gamma^{(2)}_{x,\nu}}\in\mathcal{C}$ with $M=\overline{\Omega^1_{x,\nu}}$ by a suitable choice of representative of $\Gamma^{(1)}_{x,\nu}, \Gamma^{(2)}_{x,\nu}$, which follows from the definition.

In this situation, we have the following fact:
\begin{lemma}
The function 
\[
	f(x) = d_g \left( \overline{\Gamma^{(1)}_{x,\nu}},\overline{\Gamma^{(2)}_{x,\nu}} \right)
	= d_H \left( \operatorname{graph}\Gamma^{(1)}_{x,\nu},\operatorname{graph}\Gamma^{(2)}_{x,\nu} \right)
\]
is Lebesgue measurable in $\Omega_\nu$.
\end{lemma}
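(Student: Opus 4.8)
\emph{Proof idea.}
By definition of $d_g$ one has $f(x)=d_H\bigl(\operatorname{graph}\overline{\Gamma^{(1)}_{x,\nu}},\operatorname{graph}\overline{\Gamma^{(2)}_{x,\nu}}\bigr)$, so the plan is to exhibit $f$ as a countable combination (suprema, infima, maxima) of elementary $\mathcal L^{N-1}$-measurable functions of $x$. Throughout I would work with the suitable representatives of $\Gamma^{(i)}_{x,\nu}$ furnished by $\Gamma^{(i)}\in\mathcal B_\nu$, so that for $\mathcal L^{N-1}$-a.e.\ $x\in\Omega_\nu$ the set $\operatorname{graph}\overline{\Gamma^{(i)}_{x,\nu}}$ is a nonempty compact subset of $\overline{\Omega^1_{x,\nu}}\times\mathbf R$; by Theorem~\ref{MBA} I would also fix Borel-measurable representatives of the $\Gamma^{(i)}$ on $\Omega$. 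Since measurability of $f$ is insensitive to modification on $\mathcal L^{N-1}$-null sets of $x$, it suffices to prove measurability on the conull set where everything above is valid.

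The first step is the structural observation that the ``total sliced graph'' is Borel. For fixed $i$, Theorem~\ref{MBA} gives that $\operatorname{graph}\Gamma^{(i)}\subset\Omega\times\mathbf R$ is Borel, and the affine map $L(z,s):=(\pi_\nu z,\langle z,\nu\rangle,s)$ is a linear isomorphism of $\mathbf R^N\times\mathbf R$ onto $\Pi_\nu\times\mathbf R\times\mathbf R$, hence carries Borel sets to Borel sets. Identifying $\Pi_\nu$ isometrically with $\mathbf R^{N-1}$, it follows that
\[
	G^{(i)}:=\bigl\{(x,t,s):x\in\Omega_\nu,\ t\in\Omega^1_{x,\nu},\ s\in\Gamma^{(i)}_{x,\nu}(t)\bigr\}=L\bigl(\operatorname{graph}\Gamma^{(i)}\bigr)
\]
is a Borel subset of $\Omega_\nu\times\mathbf R^2$, whose $x$-section is precisely $\operatorname{graph}\Gamma^{(i)}_{x,\nu}$ and is nonempty for every $x\in\Omega_\nu$.

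Next I would run a measurable-selection argument to produce, for each $i$, a countable family of $\mathcal L^{N-1}$-measurable maps $\phi^{(i)}_k:\Omega_\nu\to\mathbf R^2$ such that $\{\phi^{(i)}_k(x):k\in\mathbf N\}$ is dense in $\operatorname{graph}\Gamma^{(i)}_{x,\nu}$, hence in its closure $\operatorname{graph}\overline{\Gamma^{(i)}_{x,\nu}}$, for a.e.\ $x$. Fixing a countable dense set $\{y_m\}\subset\mathbf R^2$ and writing $B(y,r)$ for the open ball of radius $r$ about $y$, for each $m,n\in\mathbf N$ the set
\[
	A^{(i)}_{m,n}:=\bigl\{x\in\Omega_\nu:\operatorname{graph}\Gamma^{(i)}_{x,\nu}\cap B(y_m,1/n)\neq\emptyset\bigr\}
\]
is the image of the Borel set $G^{(i)}\cap(\Omega_\nu\times B(y_m,1/n))$ under the projection $\Omega_\nu\times\mathbf R^2\to\Omega_\nu$, hence $\mathcal L^{N-1}$-measurable by the measurable projection theorem (the base $\mathbf R^{N-1}$ carrying a complete $\sigma$-finite measure and the fibre $\mathbf R^2$ being Polish; see \cite{AF}); on $A^{(i)}_{m,n}$ a measurable selection theorem (\cite{AF}; cf.\ Theorem~\ref{MBA}) yields an $\mathcal L^{N-1}$-measurable $\phi^{(i)}_{m,n}$ with $\phi^{(i)}_{m,n}(x)\in\operatorname{graph}\Gamma^{(i)}_{x,\nu}\cap B(y_m,1/n)$, extended measurably elsewhere. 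A triangle-inequality argument shows that a re-indexing $\{\phi^{(i)}_k\}$ of $\{\phi^{(i)}_{m,n}\}$ has the claimed density. Finally, since $z\mapsto\operatorname{dist}(z,B)$ is $1$-Lipschitz and $\operatorname{dist}(z,B)=\operatorname{dist}(z,\overline B)$, for a.e.\ $x$
\[
	f(x)=\max\Bigl\{\ \sup_{k}\inf_{l}\bigl|\phi^{(1)}_k(x)-\phi^{(2)}_l(x)\bigr|,\ \ \sup_{l}\inf_{k}\bigl|\phi^{(1)}_k(x)-\phi^{(2)}_l(x)\bigr|\ \Bigr\},
\]
and since each $x\mapsto|\phi^{(1)}_k(x)-\phi^{(2)}_l(x)|$ is $\mathcal L^{N-1}$-measurable and measurability is preserved under countable infima, suprema, and finite maxima, $f$ is $\mathcal L^{N-1}$-measurable.

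The main obstacle is exactly the passage from the \emph{slicewise} data — which only has an honest pointwise meaning for $\mathcal L^{N-1}$-a.e.\ $x$ — to a \emph{single countable family of measurable functions of $x$}: a naive discretisation such as restricting the line parameter $t$ to the rationals does not work, since slices of merely measurable functions on $\Omega$ need not be continuous in $t$, so $\operatorname{graph}\overline{\Gamma^{(i)}_{x,\nu}}$ cannot be recovered from countably many rational slices. The remedy is the measurable projection/selection machinery, made applicable by the single observation of the second paragraph that $G^{(i)}$ is a linear image of $\operatorname{graph}\Gamma^{(i)}$; beyond this, the only remaining work is routine but mildly delicate bookkeeping — choosing Borel representatives so the projection theorem applies without caveats, pinning down the representative that defines $f$, and checking that all exceptional sets of $x$ are $\mathcal L^{N-1}$-null — while the behaviour of $d_H$ under closures and dense subsets is elementary.
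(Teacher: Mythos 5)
Your proposal is correct and follows essentially the same route as the paper: both hinge on fixing a Borel representative via Theorem~\ref{MBA} so that $\operatorname{graph}\Gamma^{(i)}$ is Borel, and on the observation that the sliced graphs $\operatorname{graph}\Gamma^{(i)}_{x,\nu}$ are exactly the $x$-sections of a linear (hence Borel) image of $\operatorname{graph}\Gamma^{(i)}$. The only difference is that where the paper invokes Theorem~\ref{MBA}~(i\hspace{-1pt}i) to conclude the compact-set-valued map $x\mapsto\operatorname{graph}\overline{\Gamma_{x,\nu}}$ is measurable and then appeals to continuity of $d_H$, you unpack that step explicitly via measurable projection, a Castaing-type family of selections, and the countable sup/inf formula for $d_H$ — a more detailed rendering of the same argument.
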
\label{lemma:distance}
\begin{proof}
Since each Lebesgue measurable function $f$ has a Borel measurable function $\overline{f}$ with $f(z)=\overline{f}(z)$ for $\mathcal{L}^N$-a.e.\ $z\in\Omega$, by Theorem~\ref{MBA}~(i\hspace{-1pt}i\hspace{-1pt}i), there is a Borel measurable representative of $\Gamma$.
By Theorem~\ref{MBA}~(i\hspace{-1pt}i),
$\operatorname{graph}\Gamma$ is a Borel set for the Borel representative of $\Gamma$.
Since the graph of the set-valued function $T:x\longmapsto\operatorname{graph}\overline{\Gamma_{x,\nu}}$ on $\Omega_\nu$ equals $\operatorname{graph}\Gamma$ for $\Gamma\in\mathcal{B}_\nu$ by taking a suitable representative of $\Gamma$,
we see that $T$ should be Borel measurable if $\Gamma$ is Borel measurable by Theorem~\ref{MBA}~(i\hspace{-1pt}i).
 (Note that $T(x)$ is a compact set in $\mathbf{R}\times\mathbf{R}$.)
 Since $d_H$ is continuous, the map $f(x)$ should be measurable.
\end{proof}
We now introduce a metric on $\mathcal{B}_\nu$ of form 
\[
	d_\nu \left( \Gamma^{(1)},\Gamma^{(2)} \right)
	:= \int_{\Omega_\nu} \frac{d_g \left( \overline{\Gamma^{(1)}_{x,\nu}},\overline{\Gamma^{(2)}_{x,\nu}} \right)}{1+d_g \left( \overline{\Gamma^{(1)}_{x,\nu}},\overline{\Gamma^{(2)}_{x,\nu}} \right)} \id\mathcal{L}^{N-1}(x)
\]
for $ \Gamma^1,\Gamma^2\in\mathcal{B}_\nu$, where $\mathcal{L}^{N-1}$ denotes the Lebesgue measure on $\Pi_\nu$.
 From Lemma~\ref{lemma:distance}, we see that this is a well-defined quantity for all $ \Gamma^{(1)},\Gamma^{(2)}\in\mathcal{B}_\nu$.
 We identify $\Gamma^{(1)},\Gamma^{(2)}\in\mathcal{B}_\nu$ if $\Gamma^{(1)}_{x,\nu}=\Gamma^{(2)}_{x,\nu}$ for a.e.\ $x$.
 With this identification, $(\mathcal{B}_\nu,d_\nu)$ is indeed a metric space.
 By a standard argument, we see that $(\mathcal{B}_\nu,d_\nu)$ is a complete metric space; we do not give proof since we do not use this fact.

Let $D$ be a countable dense set in $S^{N-1}$.
 We set
\[
	\mathcal{B}_D := \bigcap_{\nu\in D}{\mathcal{B}_\nu}.
\]
It is a metric space with metric
\[
	d_D \left( \Gamma^{(1)},\Gamma^{(2)} \right)
	:= \sum^\infty_{j=1} \frac{1}{2^j}
	\frac{d_{\nu_j} \left( \Gamma^{(1)},\Gamma^{(2)} \right)}{1+d_{\nu_j} \left( \Gamma^{(1)},\Gamma^{(2)} \right)},
\]
where $D=\{\nu_j\}^\infty_{j=1}$.
 (This is also a complete metric space.)

We shall fix $D$.
The convergence with respect to $d_D$ is called the \emph{sliced graph convergence}.
If $\{\Gamma_k\}\subset\mathcal{B}_D$ converges to $\Gamma\in\mathcal{B}_D$ with respect to $d_D$, we write $\Gamma_k\xrightarrow{sg}\Gamma$ (as $k\to\infty$).
Roughly speaking, $\Gamma_k\xrightarrow{sg}\Gamma$ if the graph of the slice $\Gamma_k$ converges to that of $\Gamma$ for a.e. $x \in \Omega_\nu$ for any $\nu \in D$.
For a function $v$ on $\Omega$, we associate a set-valued function $\Gamma_v$ by $\Gamma_v(x)=\left\{v(x)\right\}$.
If $\Gamma_k=\Gamma_{v_k}$ for some $v_k$, we shortly write $v_k\xrightarrow{sg}\Gamma$ instead of $\Gamma_{v_k}\xrightarrow{sg}\Gamma$.
We note that if $v\in H^1(\Omega)$, the $L^2$-Sobolev space of order $1$, then $\Gamma_v\in\mathcal{B}_D$ for any $D$.

We conclude this subsection by showing that the notions of the graph convergence and the sliced graph convergence are unrelated for $N\geq2$.
 First, we give an example that the graph convergence does not imply the sliced graph convergence.
 Let $C(r)$ denote the circle of radius $r>0$ centered at the origin in $\mathbf{R}^2$.
 It is clear that $d_H\left(C(r),C(r-\varepsilon)\right)\to0$ as $\varepsilon>0$ tends to zero.
 However, for $\nu=(1,0)$, $C(r-\varepsilon)_{x,\nu}$ with $x=(0,\pm r)$ is empty and does not converge to a single point $C(r)_{x,\nu}=\left\{(0,\pm r)\right\}$.
 In this case, $C(r-\varepsilon)_{x,\nu}$ converges to $C(r)_{x,\nu}$ in the Hausdorff sense except the case $x=(0,\pm r)$.
 To make the exceptional set has a positive $\mathcal{L}^1$ measure in $\Pi_\nu$, we recall a thick Cantor set defined by
\begin{align*}
	G &:= [0,1] \backslash U \\
	U &:= \bigcup \left\{\left( \frac{a}{2^n} - \frac{1}{2^{2n+1}}, \frac{a}{2^n} + \frac{1}{2^{2n+1}} \right)
	\biggm| n, a = 1,2,\ldots	\right\}.
\end{align*}
This $G$ is a compact set with a positive $\mathcal{L}^1$ measure.
 We set
\[
	K := \bigcup_{r \in G} C(r), \quad
	K_\varepsilon := \bigcup_{r \in G} C(r-\varepsilon).
\]
$K_\varepsilon$ converges to $K$ as $\varepsilon\to0$ in the Hausdorff distance sense.
 However, for any $\nu\in S^2$, the slice $(K_\varepsilon)_{x,\nu}$ does not converge to {$K_{x,\nu}$} for $x\in\Pi_\nu$ with $|x|\in G$.
 It is easy to construct an example that the graph convergence does not imply the sliced graph convergence based on this set.
 Let $\Omega$ be an open unit disk centered at the origin.
 We set
\begin{center}
\begin{minipage}[c][24pt][b]{0.35\textwidth}
\begin{eqnarray*}
	\Gamma_\varepsilon(x) := \left\{
\begin{array}{cl}
	[0,1], & z \in K_\varepsilon \\
	\{ 1 \}, & z \in \Omega\backslash K_\varepsilon
\end{array}
\right.,
\end{eqnarray*}
\end{minipage}
\begin{minipage}[c][24pt][b]{0.35\textwidth}
\begin{eqnarray*}
	\Gamma(x) := \left\{
\begin{array}{cl}
	[0,1], & z \in K \\
	\{ 1 \}, & z \in \Omega\backslash K
\end{array}
\right..
\end{eqnarray*}
\end{minipage}
\end{center}
The graph convergence of $\Gamma_\varepsilon$ to $\Gamma$ is equivalent to the Hausdorff convergence of $K_\varepsilon$ to $K$.
 The sliced graph convergence is equivalent to saying $(K_\varepsilon)_{x,\nu}\to K_{x,\nu}$ for $\nu\in D$ and a.e.\ $x$, where $D$ is some dense set in $S^1$.
 However, from the construction of $K_\varepsilon$ and $K$, we observe that for any $\nu\in S^1$, the slice $K_{x,\nu}$ does not converge to $K$ for $x$ with $|x|\in G$, which has a positive $\mathcal{L}^1$ measure on $\Pi_\nu$.
 Thus, we see that $\Gamma_\varepsilon$ does not converge to $\Gamma$ in the sense of the sliced graph convergence while $\Gamma_\varepsilon$ converges to $\Gamma$ in the sense of graph convergence. 

The sliced graph convergence does not imply the graph convergence even if the graph convergence is interpreted in the sense of essential distance.
 For any $\mathcal{H}^N$-measurable set $A$ in $\mathbf{R}^{N+1}$ and a point $p\in\mathbf{R}^{N+1}$, we set the essential distance from $p$ to $A$ as
\[
	d_e(p,A) := \inf \left\{ r>0 \bigm| \mathcal{H}^N \left( B_r(p)\cap A \right) > 0 \right\},
\]
where $B_r(p)$ is a closed ball of radius $r$ centered at $p$.
 We set
\[
	N_\delta(A) := \left\{ q\in\mathbf{R}^{N+1} \bigm| d_e (q,A) < \delta \right\},
\]
and the essential Hausdorff distance is defined as
\[
	d_{eH}(A,B) := \inf \left\{ \delta>0 \bigm| A \subset N_\delta(B),\ B \subset N_\delta(A) \right\}.
\]
Let $\Omega$ be a domain in $\mathbf{R}^N$ ($N\geq 2$) containing $B_1(0)$ and set
\[
	\Gamma^\varepsilon(z) = \left\{ \left( 1-|z|/\varepsilon \right)_+ \right\}, \quad
	\Gamma^0(z) = \{0\}
\]
for $z\in\Omega$ and $\varepsilon>0$.
 Clearly, for any $\nu\in S^{N-1}$, $x\in\Omega_\nu$ with $x\neq0$,
\[
	d_H \left( \operatorname{graph} \Gamma^\varepsilon_{x,\nu}, \operatorname{graph} \Gamma^0_{x,\nu} \right) \to 0
\]
holds as $\varepsilon\to0$,
 However,
\[
	d_{eH} \left( \operatorname{graph} \Gamma^\varepsilon, \operatorname{graph} \Gamma^0 \right) = 1;
\]
in particular, $\Gamma^\varepsilon$ does not converge to $\Gamma^0$ in the $d_{eH}$ convergence of the graphs.

\section{Lower semicontinuity} \label{SLSC} 

We now introduce a single-well Modica--Mortola function $E^\varepsilon_\mathrm{sMM}$ on $H^1(\Omega)$ when $\Omega$ is a bounded domain in $\mathbf{R}^N$.
 For $v\in H^1(\Omega)$, we set an integral
\[
	E^\varepsilon_\mathrm{sMM} (v) := \frac{\varepsilon}{2} \int_\Omega |\nabla v|^2 \id\mathcal{L}^N
	 + \frac{1}{2\varepsilon} \int_\Omega F(v) \id\mathcal{L}^N,
\]
where $\mathcal{L}^N$ denotes the $N$-dimensional Lebesgue measure.
 Here, the potential energy $F$ is a single-well potential.
 We shall assume that 
\begin{enumerate}
\item[(F1)] $F\in C^1(\mathbf{R})$ is non-negative, and $F(v)=0$ if and only if $v=1$, 
\item[(F2)] $\liminf_{|v|\to\infty} F(v) > 0$.
 We occasionally impose a stronger growth assumption than (F2): 
\item[(F2')] (monotonicity condition) $F'(v)(v-1)\geq0$ for all $v\in\mathbf{R}$.
\end{enumerate}

We are interested in the Gamma limit of $E^\varepsilon_\mathrm{sMM}$ as $\varepsilon\to 0$ under the sliced graph convergence.
We define the subset $\mathcal{A}_0 := \mathcal{A}_0(\Omega) \subset \mathcal{B}_D$ as follows: $\Xi \in \mathcal{A}_0(\Omega)$ if there is a countably $N-1$ rectifiable set $\Sigma\subset\Omega$ such that 
\begin{equation}
\Xi(z) = \left \{
\begin{array}{l} \label{SIG}
1,\ z\in\Omega\backslash\Sigma \\
\left[\xi^-, \xi^+\right],\ z\in\Sigma
\end{array}
\right.
\end{equation}
with $\mathcal{H}^{N-1}$-measurable function $\xi_\pm$ on $\Sigma$ and $\xi^-(z) \leq 1 \leq \xi^+(z)$ for $\mathcal{H}^{N-1}$-a.e.\ $z \in \Sigma$.
For the definition of countably $N-1$ rectifiability, see the beginning of Section~\ref{SSBP}.
Here $\mathcal{H}^m$ denotes the $m$-dimensional Hausdorff measure.

We briefly remark on the compactness of the graph of $\Xi\in\mathcal{A}_0$.
By definition, if $\Xi$ is of form \eqref{SIG}, then $\Xi(z)$ is compact.
However, there may be a chance that $\operatorname{graph}\overline{\Gamma_{x,\nu}}$ is not compact, even for the one-dimensional case ($N=1$).
 Indeed, if a set-valued function on $(0,1)$ is of form
\begin{equation*}
	\Xi(z) = \left \{
\begin{array}{ll}
\left[1,m\right]&\text{for}\ z=1/m \\
\{1\}&\text{otherwise},
\end{array}
\right.
\end{equation*}
then $\overline{\Xi}$ is not compact in $[0,1]\times\mathbf{R}$.
It is also possible to construct an example that $\overline{\Xi}\neq\Xi$ in $(0,1)$, which is why we impose $\Xi\in\mathcal{B}_D$ in the definition of $\mathcal{A}_0$.

For $\Xi\in\mathcal{A}_0$, we define a functional
\[
	E^0_\mathrm{sMM}(\Xi,\Omega) := 2\int_\Sigma \left\{ G(\xi^-) + G(\xi^+) \right\} {\id\mathcal{H}^{N-1}},\quad \text{where}\ G(\sigma) := \left| \int^\sigma_1 \sqrt{F(\tau)} {\id\tau} \right|.
\]
For later applications, it is convenient to consider a more general functional.
 Let $J$ be a countably $N-1$ rectifiable set, and $\alpha:\mathbf{R}\to[0,\infty)$ be continuous.
 Let $j$ be a non-negative $\mathcal{H}^{N-1}$-measurable function on $J$. We denote the triplet $(J,j,\alpha)$ by $\mathcal{J}$.
 We set
\[
	E^{0,{\mathcal{J}}}_\mathrm{sMM}(\Xi,\Omega) = E^0_\mathrm{sMM}(\Xi,\Omega)
	+ \int_{J\cap\Sigma} \left( \min_{\xi^-\leq\xi\leq\xi^+} \alpha(\xi) \right) {\id\mathcal{H}^{N-1}}.
\]
For $S$, we also set
\[
	E^{\varepsilon,{\mathcal{J}}}_\mathrm{sMM}(v) := E^\varepsilon_\mathrm{sMM}(v)
	+ \int_J \alpha(v)j{\id\mathcal{H}^{N-1}},
\]
which is important to study the Kobayashi--Warren--Carter energy.

\subsection{Liminf inequality} \label{SSLINF} 
We shall state the ``liminf inequality'' for the convergence of $E^{\varepsilon,{\mathcal{J}}}_\mathrm{sMM}$.
%
\begin{theorem} \label{INF}
Let $\Omega$ be a bounded domain in $\mathbf{R}^N$.
 Assume that $F$ satisfies (F1) and (F2).
 For ${\mathcal{J}}=(J,j,\alpha)$, assume that $J$ is countably $N-1$ rectifiable in $\Omega$ with a non-negative $\mathcal{H}^{N-1}$-measurable function $j$ on $J$ and that $\alpha \in C(\mathbf{R})$ is non-negative.
 Let $D$ be a countable dense set of $S^{N-1}$.
 Let $\{v_\varepsilon\}_{0<\varepsilon<1}$ be in $H^1(\Omega)$ so that $\Gamma_{v_\varepsilon}\in\mathcal{B}_D$.
 If $v_\varepsilon\xrightarrow{sg}\Xi$ and $\Xi\in\mathcal{A}_0$, then
\[
	E^{0,{\mathcal{J}}}_\mathrm{sMM}(\Xi,\Omega) \leq \liminf_{\varepsilon\to 0}E^{\varepsilon,{\mathcal{J}}}_\mathrm{sMM} (v_\varepsilon).
\]
\end{theorem}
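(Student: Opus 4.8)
The plan is to combine De Giorgi's localization method for Modica--Mortola type $\Gamma$-$\liminf$ inequalities with the slicing (disintegration) identity from the introduction, so as to reduce the estimate to the one-dimensional liminf inequality proved in \cite{GOU}. I may assume $L:=\liminf_{\varepsilon\to0}E^{\varepsilon,\mathcal{J}}_{\mathrm{sMM}}(v_\varepsilon)<\infty$ (otherwise there is nothing to prove) and pass to a subsequence realizing $L$; since $D$ is countable and $\Gamma_{v_\varepsilon}\xrightarrow{sg}\Xi$ means $d_\nu(\Gamma_{v_\varepsilon},\Xi)\to0$ for each $\nu\in D$, a diagonal argument extracts a further subsequence along which, for every $\nu\in D$ and $\mathcal{L}^{N-1}$-a.e.\ $x\in\Omega_\nu$, the graphs of $(v_\varepsilon)_{x,\nu}$ converge in Hausdorff distance to the graph of $\overline{\Xi_{x,\nu}}$; compactness of these limiting graphs (from $\Xi\in\mathcal{B}_D$) together with (F2) keeps the relevant slices bounded, so the one-dimensional theory applies. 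Finally, by truncating $G$ and using monotone convergence it is enough to bound $L$ from below by $\int_\Sigma\min\{2(G(\xi^-)+G(\xi^+)),M\}\,\id\mathcal{H}^{N-1}$ plus the $\mathcal{J}$-contribution for each fixed $M$.

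First I would decompose $\Sigma$, using countable $(N-1)$-rectifiability, as $\Sigma=N_0\cup\bigcup_{i\ge1}K_i$ with $\mathcal{H}^{N-1}(N_0)=0$, the $K_i$ pairwise disjoint compact sets, each contained in the graph of a Lipschitz function over a hyperplane $\Pi_{n_i}$ with Lipschitz constant below a prescribed $\eta$; I decompose $J$ similarly, taking the two decompositions compatible on $J\cap\Sigma$ (where the approximate tangent planes coincide). Enclosing finitely many $K_1,\dots,K_m$ in pairwise disjoint thin tubular open neighborhoods $A_i$ and using superadditivity of $\liminf$ over disjoint domains, the problem localizes to estimating $\liminf_{\varepsilon\to0}\bigl(\tfrac\varepsilon2\int_{A_i}|\nabla v_\varepsilon|^2+\tfrac1{2\varepsilon}\int_{A_i}F(v_\varepsilon)\bigr)\id\mathcal{L}^N$ for each $i$ and $\liminf_{\varepsilon\to0}\int_J\alpha(v_\varepsilon)j\,\id\mathcal{H}^{N-1}$. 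On $A_i$ I would pick $\nu_i\in D$ within angle $\eta$ of $n_i$, discard the tangential part of $\nabla v_\varepsilon$ (so $|\nabla v_\varepsilon|\ge|\langle\nabla v_\varepsilon,\nu_i\rangle|=|((v_\varepsilon)_{x,\nu_i})'|$ along each line), and apply the disintegration identity to obtain a lower bound $\int_{(A_i)_{\nu_i}}e^i_\varepsilon(x)\,\id\mathcal{L}^{N-1}(x)$, where $e^i_\varepsilon(x)$ is the one-dimensional single-well Modica--Mortola energy of the slice $(v_\varepsilon)_{x,\nu_i}$ on $(A_i)^1_{x,\nu_i}$.

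For $\mathcal{L}^{N-1}$-a.e.\ such $x$ the transversal line meets $K_i$ in a single point $z_x=x+t_i(x)\nu_i$ (the graph has small Lipschitz constant and $\nu_i$ is nearly normal) and, after shrinking $A_i$, avoids $\Sigma\setminus K_i$ inside $A_i$; hence the limiting sliced set-valued function equals $\{1\}$ off $t_i(x)$ and $[\xi^-(z_x),\xi^+(z_x)]$ at $t_i(x)$, the elementary configuration to which the one-dimensional liminf inequality of \cite{GOU} applies, giving $\liminf_{\varepsilon\to0}e^i_\varepsilon(x)\ge2\bigl(G(\xi^-(z_x))+G(\xi^+(z_x))\bigr)$. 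Then Fatou's lemma and the area formula for the injective projection $\pi_{\nu_i}\colon K_i\to\Pi_{\nu_i}$, whose Jacobian at $z$ is $|\langle n(z),\nu_i\rangle|\ge\cos2\eta$, yield $\liminf_{\varepsilon\to0}\int_{(A_i)_{\nu_i}}e^i_\varepsilon\,\id\mathcal{L}^{N-1}\ge\cos2\eta\int_{K_i}2\bigl(G(\xi^-)+G(\xi^+)\bigr)\id\mathcal{H}^{N-1}$. For the $\mathcal{J}$-term I would use that, by closedness of the limiting slice graphs, for $\mathcal{H}^{N-1}$-a.e.\ $z\in J$ every subsequential limit of $v_\varepsilon(z)$ (read along a slice in a direction of $D$ transverse to the relevant flat piece of $J$, on which the a.e.-slice convergence holds; note that $\mathcal{L}^{N-1}$-null sets of $\Pi_\nu$ pull back to $\mathcal{H}^{N-1}$-null sets of the piece since the projection has Jacobian bounded away from $0$) lies in $\Xi(z)$, hence $\liminf_{\varepsilon\to0}\alpha(v_\varepsilon(z))\ge\min_{\xi\in\Xi(z)}\alpha(\xi)$; since $j\ge0$, Fatou then gives $\liminf_{\varepsilon\to0}\int_J\alpha(v_\varepsilon)j\,\id\mathcal{H}^{N-1}\ge\int_{J\cap\Sigma}\bigl(\min_{\xi^-\le\xi\le\xi^+}\alpha(\xi)\bigr)j\,\id\mathcal{H}^{N-1}$ (plus the nonnegative part on $J\setminus\Sigma$). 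Summing over $i\le m$, then enlarging $m$ so that $\mathcal{H}^{N-1}(\Sigma\setminus\bigcup_{i\le m}K_i)$ contributes little, and finally letting $\eta\to0$ and $M\to\infty$, recovers $E^{0,\mathcal{J}}_{\mathrm{sMM}}(\Xi,\Omega)\le L$.

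The main obstacle is the reduction to the one-dimensional inequality together with the measure-theoretic bookkeeping it requires: choosing the rectifiable decomposition with uniformly small flatness, selecting slicing directions from the prescribed countable dense set $D$ that are simultaneously transverse to $\Sigma$ and to $J$ and compatible with the a.e.-slice graph convergence, pulling the $\mathcal{L}^{N-1}$-null exceptional sets of $\Pi_{\nu_i}$ back to $\mathcal{H}^{N-1}$-null sets of the $K_i$, and verifying that a.e.\ transversal line meets a given flat piece exactly once so that the one-dimensional limit has the single-point form required by \cite{GOU}. Once that reduction is in place, the one-dimensional estimate, Fatou's lemma, and the area formula finish the argument routinely.
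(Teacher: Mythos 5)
Your proposal is correct and follows essentially the same route as the paper: decompose $\Sigma$ into disjoint compact nearly-flat pieces, slice in directions from $D$ close to their normals, discard the tangential gradient, invoke the one-dimensional liminf inequality of \cite{GOU} on almost every line (which meets each flat piece in a single point), and recombine via Fatou's lemma and the area formula with a small-angle correction before letting the number of pieces tend to infinity and the flatness parameter to zero. The minor variations (truncating $G$ at level $M$, and handling the $\alpha j$-term by a pointwise Fatou argument on $J$ rather than the paper's slicing of the surface integral) are harmless and, if anything, slightly more explicit about the measure-theoretic bookkeeping.
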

\begin{remark} \label{INF1}
\begin{enumerate}
\item[(i)] The last inequality is called the liminf inequality.
 Here, we assume that the limit $\Xi$ is in $\mathcal{A}_0$, which is a stronger assumption than the one-dimensional result \cite[Theorem 2.1 (i)]{GOU}, where this condition automatically follows from the finiteness of the right-hand side of the liminf inequality.
\item[(i\hspace{-1pt}i)] In a one-dimensional setting, we consider the limit functional in $\overline{\Omega}$.
 Here we only consider it in $\Omega$.
 Thus, our definition of $\mathcal{A}_0$ is different from \cite{GOU}.
 Under suitable assumptions on the boundary, say $C^1$, we are able to extend the result onto $\overline{\Omega}$.
 Of course, we may replace $\Omega$ with a flat torus $\mathbf{T}^N=\mathbf{R}^N/\mathbf{Z}^N$.
\item[(i\hspace{-1pt}i\hspace{-1pt}i)] In \cite{GOU}, $\alpha(v)$ is taken $v^2$ so that
\[
	E^{0,b}_\mathrm{sMM}(\Xi,M) = E^0_\mathrm{sMM}(\Xi,M) + b\left(\left(\min\Xi(a)\right)_+\right)^2,
\]
where $(f)_+$ denotes the positive part defined by $f_+=\max(f,0)$.
 However, in \cite{GOU}, this operation was missing in the definition, which is incorrect.
\end{enumerate}
\end{remark}
%

\subsection{Basic properties of a countably $N-1$ rectifiable set} \label{SSBP} 

To prove Theorem \ref{INF}, we begin with the basic properties of a countably $N-1$ rectifiable set.
A set $J$ in $\mathbf{R}^N$ is said 
to be countably $N-1$ rectifiable if
\[
	J \subset J_0 \cup \left( \bigcup^\infty_{j=1} F_j\left(\mathbf{R}^{N-1}\right) \right)
\]
where $\mathcal{H}^{N-1}(J_0)=0$ and $F_j:\mathbf{R}^{N-1}\to\mathbf{R}^N$ are Lipschitz mappings for $j=1,2,\ldots$.
\begin{definition} \label{DEL}
Let $\delta>0$.
 A set $K$ in $\mathbf{R}^N$ is $\delta$-flat if there are $V\subset\mathbf{R}^{N-1}$, a $C^1$ function $\psi\colon\mathbf{R}^{N-1}\to\mathbf{R}$, and a rotation $A\in SO(N)$ such that
\[
	K = \left\{ \left(x,\psi(x) \right)A \bigm| x \in V \right\}
\]
and $\|\nabla\psi\|_\infty\leq \delta$.
\end{definition}
\begin{lemma} \label{CR}
Let $\Sigma$ be a countably $N-1$ rectifiable set.
 For any $\delta>0$, there is a disjoint countable family $\{K_i\}^\infty_{i=1}$ of compact $\delta$-flat sets and $\mathcal{H}^{N-1}$-measure zero $N_0$ such that
\[
	\Sigma = N_0 \cup \left( \bigcup^\infty_{i=1} K_i \right).
\]
\end{lemma}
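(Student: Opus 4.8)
The plan is to use the standard structure theorem for countably $N-1$ rectifiable sets as a decomposition into pieces of $C^1$ hypersurfaces, and then to subdivide each such piece into small compact patches on which the tangent plane varies little, so that locally the hypersurface is the graph of a $C^1$ function with small gradient after a rotation. First I would invoke the classical fact (see e.g.\ Federer \cite{Fe}, or Simon's GMT lectures) that a countably $N-1$ rectifiable set $\Sigma$ can be written, up to an $\mathcal{H}^{N-1}$-null set $N_1$, as a countable disjoint union $\Sigma = N_1 \cup \bigcup_{j} M_j$ where each $M_j$ is a (relatively closed, $\mathcal{H}^{N-1}$-measurable) subset of an embedded $C^1$ hypersurface $\mathcal{S}_j\subset\mathbf{R}^N$. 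The disjointness can be arranged by replacing $M_j$ with $M_j\setminus\bigcup_{k<j}M_k$.

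Next, fix $\delta>0$ and fix one hypersurface $\mathcal{S}=\mathcal{S}_j$. Since $\mathcal{S}$ is $C^1$, its Gauss map is continuous, so around each point $p\in\mathcal{S}$ there is an open neighborhood (in $\mathcal{S}$) on which, after a rotation $A_p\in SO(N)$ sending the tangent plane $T_p\mathcal{S}$ to $\mathbf{R}^{N-1}\times\{0\}$, the surface is a graph $\{(x,\psi(x))A_p : x\in V_p\}$ of a $C^1$ function $\psi$ with $\|\nabla\psi\|_\infty\le\delta$ on $V_p$; this is just the implicit/inverse function theorem together with continuity of $\nabla\psi$ at the center. By Lindel\"of's theorem (second countability of $\mathbf{R}^N$) countably many such neighborhoods cover $\mathcal{S}$, hence cover $M_j$. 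Intersecting these $\delta$-flat graph-neighborhoods with $M_j$, I obtain a countable cover of $M_j$ by sets each contained in a $\delta$-flat set in the sense of Definition \ref{DEL}. To upgrade "contained in a $\delta$-flat set" to "is a compact $\delta$-flat set" and to disjointify: within each coordinate patch the set $M_j$ corresponds, under the graph parametrization, to an $\mathcal{H}^{N-1}$-measurable (equivalently $\mathcal{L}^{N-1}$-measurable) subset $E$ of $V_p\subset\mathbf{R}^{N-1}$; by inner regularity of Lebesgue measure I exhaust $E$ up to a null set by countably many disjoint compact sets $C_{p,m}\subset V_p$, and set $K := \{(x,\psi(x))A_p : x\in C_{p,m}\}$, which is compact and $\delta$-flat (take $V=C_{p,m}$, possibly after extending $\psi$ to a global $C^1$ function with the same gradient bound, e.g.\ by a standard extension/cutoff — or simply note Definition \ref{DEL} only requires $\psi$ on $\mathbf{R}^{N-1}$ with $\|\nabla\psi\|_\infty\le\delta$, which is achieved by a Lipschitz/McShane-type extension of $\nabla\psi$ followed by integration, or by multiplying $\psi$ by a cutoff that preserves the gradient bound near $C_{p,m}$). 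Re-indexing the doubly-indexed family $\{K_i\}$ over all $j$, all patches, and all $m$, and collecting all the discarded null sets (the original $N_1$, the surface-measure-zero remainders in each patch, and the boundary-overlap adjustments) into a single $\mathcal{H}^{N-1}$-null set $N_0$, gives the claimed decomposition $\Sigma = N_0 \cup \bigcup_i K_i$.

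The main obstacle is the bookkeeping needed to keep the family \emph{disjoint} while each $K_i$ remains \emph{compact} and \emph{$\delta$-flat}: successively removing earlier patches $K_i\setminus\bigcup_{k<i}K_k$ destroys compactness, so one must instead do the disjointification at the level of the measurable sets $E\subset\mathbf{R}^{N-1}$ in each coordinate chart and only \emph{afterwards} extract compact exhausting subsets — accepting that the leftover (the difference between $E$ and the union of its compact subpieces) is $\mathcal{L}^{N-1}$-null and hence contributes only to $N_0$. A secondary technical point is arranging the global $C^1$ function $\psi\colon\mathbf{R}^{N-1}\to\mathbf{R}$ with $\|\nabla\psi\|_\infty\le\delta$ required by Definition \ref{DEL} from a local graph representation; this is handled by a routine extension argument, using that the gradient bound is preserved under a suitable cutoff once one works on a small enough patch around each point. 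None of this requires more than classical rectifiability theory and the inner regularity of Lebesgue measure.
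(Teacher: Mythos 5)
Your proposal is correct and follows essentially the same route as the paper: decompose $\Sigma$ (up to an $\mathcal{H}^{N-1}$-null set) into pieces of $C^1$ hypersurfaces, refine these to $\delta$-flat graph patches, disjointify at the level of measurable sets, and then exhaust each piece by countably many disjoint compact subsets via inner regularity, dumping all leftovers into $N_0$. The only cosmetic difference is that you apply inner regularity of $\mathcal{L}^{N-1}$ in the coordinate charts whereas the paper applies Borel regularity of $\mathcal{H}^{N-1}$ directly on the surface pieces; these are equivalent here, and you correctly identify the key point that disjointification must precede the extraction of compact subsets.
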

%
%
\begin{proof}
By \cite[Lemma 11.1]{Sim}, there is a countable family of  $C^1$ manifolds $\{M_i\}^\infty_{i=1}$ and $N$ with $\mathcal{H}^{N-1}(N)=0$ such that
\[
	\Sigma \subset N \cup \left( \bigcup^\infty_{i=1} M_i \right).
\]
Since $M_i$ is a $C^1$ manifold, it can be written as a countable family of $\delta$-flat sets.
 Thus, we may assume that $M_i$ is $\delta$-flat.
 We define $\{N_i,\Sigma_i\}^\infty_{k=1}$ inductively by
\begin{align*}
	&N_{{1}} := \Sigma \cap M_1, \quad \Sigma_1 := \Sigma \backslash N_1 \\
	&N_{i+1} := \Sigma_i \cap M_{i+1}, \quad \Sigma_{i+1} := \Sigma_i \backslash N_i\ (i=1,{2,\ldots}).
\end{align*}
Here, $N_i$ is $\mathcal{H}^{N-1}$-measurable and $\mathcal{H}^{N-1}(N_i)<\infty$.
 Since $\mathcal{H}^{N-1}$ is Borel regular, for any $\delta$, there exists a compact set $C\subset N_i$ such that $\mathcal{H}^{N-1}(N_i\backslash C)<\delta$.
 Thus, there is a disjoint countable family $\{M_{ij}\}^\infty_{j=1}$ of compact sets, and an $\mathcal{H}^{N-1}$-zero set $N_{i0}$ such that
\[
	N_i = N_{i0} \cup \left( \bigcup^\infty_{j=1} M_{ij} \right)\ (i=1,2,\ldots).
\]
Indeed, we define a sequence of compact sets $\{M_{ij}\}$ inductively by
\begin{align*}
	& M_{i1} \subset N_i,\\
	& M_{i,j+1} \subset N_i \backslash \bigcup^j_{k=1} M_{ik},\ j=1,2,\ldots
\end{align*}
such that $\mathcal{H}^{N-1} \left(N_i\backslash \bigcup^{{j}}_{k=1}M_{i{k}}\right)<1/2^{{j}}$. Then, setting $N_{i0}=N_i\backslash\bigcup^\infty_{j=1} M_{ij}$ yields the desired decomposition of $N_i$.
 Setting 
\[
	N_0 = (N\cap\Sigma) \cup \left( \bigcup^\infty_{i=1} N_{i0} \right)
\]
and renumbering $\{M_{ij}\}$ as $\{K_i\}$, the desired decomposition is obtained.
\end{proof}
%
\subsection{Proof of liminf inequality} \label{SINF}
\begin{proof}[Proof of Theorem \ref{INF}]
By Lemma \ref{CR}, for $\delta\in(0,1)$, we decompose $\Sigma$ as
\[
	\Sigma = N_0 \cup \left( \bigcup^\infty_{i=1} K_i \right),
\]
where $\{K_i\}^\infty_{i=1}$ is a disjoint family of compact $\delta$-flat sets and $\mathcal{H}^{N-1}(N_0)=0$.
 We set
\[
	\Sigma_m = \bigcup^m_{i=1} K_i
\]
and take a disjoint family of open sets $\{U^m_i\}^m_{i=1}$ such that $K_i\subset U^m_i$.
 By definition, $K_i$ is of the form
\[
	K_i = \left\{ \left(x,\psi(x)\right)A_i \bigm|
	x \in V_i \right\}
\]
for some $A_i\in SO(N)$, a compact set $V_i\subset\mathbf{R}^{N-1}$ and $\psi_i\in C^1(\mathbf{R}^{N-1})$ with $\|\nabla\psi_i\|_\infty\leq\delta$.
 Since $D$ is dense in $S^{N-1}$, we are able to take $\nu^i\subset D$, which is close to the normal of the hyperplane
\[
	P_i = \left\{ (x,0)A_i \bigm|
	x \in \mathbf{R}^{N-1} \right\}
\]
for $i=1,\ldots,m$.
 We may assume that $\nu_i$ is normal to $P_i$ and $\|\nabla\psi_i\|_\infty\leq 2\delta$ by rotating slightly.
 See Figure \ref{FRK}.
\begin{figure}[htb]
\centering 
\includegraphics[width=6.5cm]{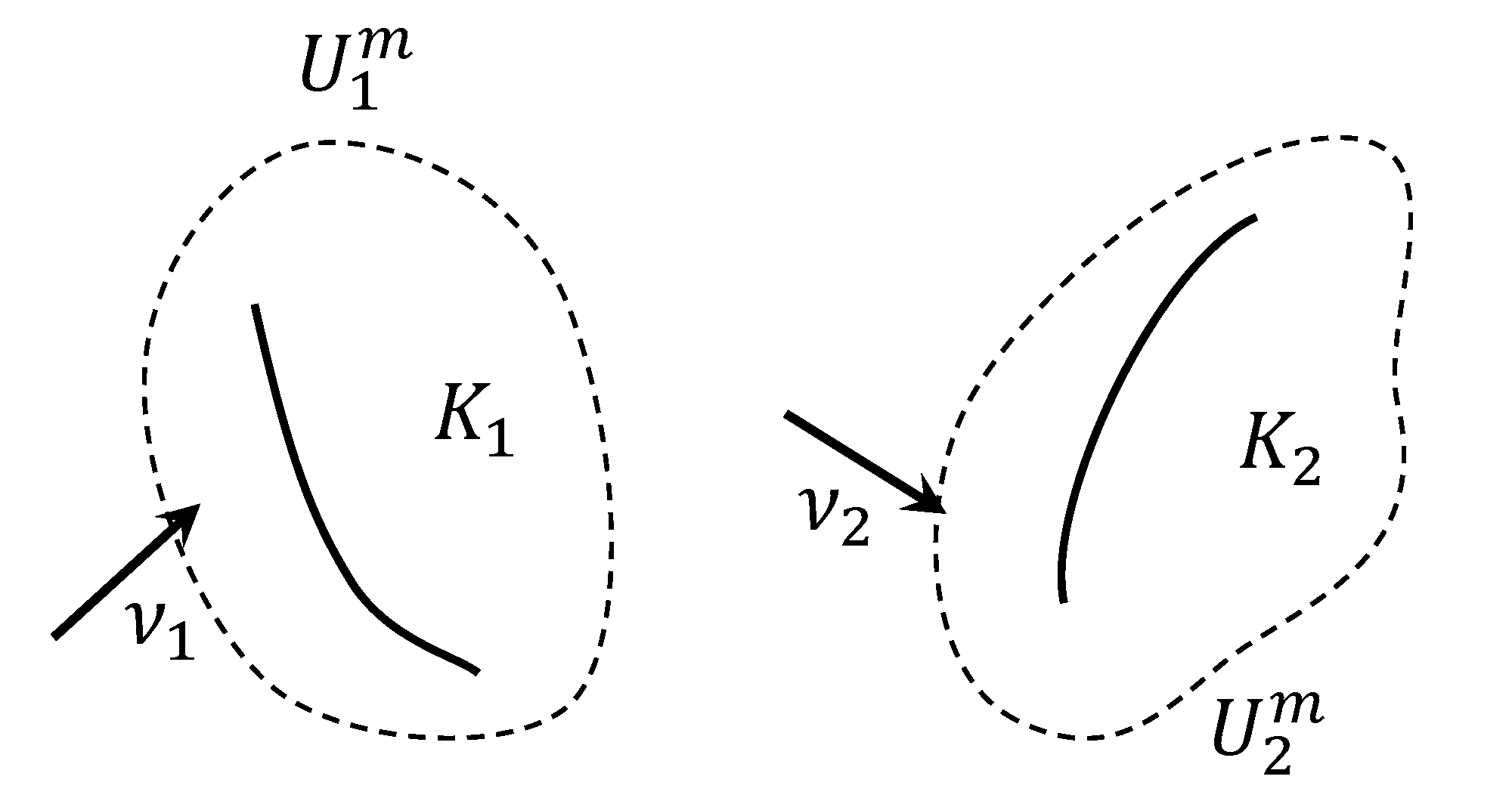}
\caption{The set $\Sigma_2$} \label{FRK}
\end{figure}
We decompose
\[
	E^\varepsilon_\mathrm{sMM}(v_\varepsilon) \geq \sum^m_{i=1} \int_{U^m_i} \left\{ \frac{\varepsilon}{2}|\nabla v_\varepsilon|^2 + \frac{1}{2\varepsilon} F(v_\varepsilon) \right\} \id\mathcal{L}^N.
\]
By slicing, we observe that the right-hand side is
\begin{align*}
	\int_{U^m_i} &{\left\{ \frac{\varepsilon}{2}|\nabla v_\varepsilon|^2 + \frac{1}{2\varepsilon} F(v_\varepsilon) \right\}} \id\mathcal{L}^N\\
	&= \int_{(U^m_i)_{\nu^i}} \left( \int_{(U^m_i)^1_{x,\nu^i}} \left\{ \frac{\varepsilon}{2}|\nabla v_\varepsilon|^2_{x,\nu^i} + \frac{1}{2\varepsilon} F(v_{\varepsilon,x,\nu^i}) \right\} \id t \right) \id\mathcal{L}^{N-1}(x) \\
	&\geq \int_{(U^m_i)_{\nu^i}} \left( \int_{(U^m_i)^1_{x,\nu^i}} \left\{ \frac{\varepsilon}{2}\left|\partial_t(v_{\varepsilon,x,\nu^i}) \right|^2 + \frac{1}{2\varepsilon} F(v_{\varepsilon,x,\nu^i}) \right\} \id t \right) \id\mathcal{L}^{N-1}(x).
\end{align*}
Since $v_\varepsilon\xrightarrow{sg}\Xi$, we see that {$\overline{v_{\varepsilon,x,\nu}}$ converges to $\overline{\Xi_{x,\nu^i}}$ as $\varepsilon \to 0$ in the sense of the graph convergence in a one dimensional setting for $\mathcal{L}^{N-1}$-a.e. $x$.}
Applying the one-dimensional result \cite[Theorem 2.1 (i)]{GOU}, we have
\begin{equation} \label{ONE}
	\begin{split}
		\liminf_{\varepsilon\to 0} \int_{(U^m_i)^1_{x,\nu^i}} &\left\{ \frac{\varepsilon}{2}\left|\partial_t(v_{\varepsilon,x,\nu^i}) \right|^2 + \frac{1}{2\varepsilon} F(v_{\varepsilon,x,\nu^i}) \right\} \id t\\
		&\geq \sum^\infty_{k=1} 2 \left\{ G \left(\xi^+_{x,\nu^i}(t_k)\right) + G \left(\xi^-_{x,\nu^i}(t_k)\right) \right\}
	\end{split}
\end{equation}
for $\{t_k\}^\infty_{k=1}$, where $\Xi_{x,\nu^i}(t)$ is not a singleton in $(U^m_i)^1_{x,\nu^i}$.
 This set $\{t_k\}^\infty_{k=1}$ contains a unique point $t_x$ such as
\[
	(K_i)^1_{x,\nu^i} \cap (U^m)^1_{x,\nu^i} = \{t_x\},
\]
so the right-hand side of \eqref{ONE} is estimated from below by
\[
	2 \left\{ G \left(\xi^+_{x,\nu^i}(t_x)\right) + G \left(\xi^-_{x,\nu^i}(t_x)\right) \right\}.
\]
Since integration is lower semicontinuous by Fatou's lemma, we now observe that
\[
	\liminf_{\varepsilon\to 0} E^\varepsilon_\mathrm{sMM}(v_\varepsilon) \geq \sum^m_{i=1} \int_{(U^m_i)_{\nu^i}} \widetilde{G} \left(x + t_x \nu^i \right) \id\mathcal{L}^{N-1}(x),
\]
where $\widetilde{G}(x)=2\left\{G \left(\xi^+(x)\right) + G \left(\xi^-(x)\right)\right\}$ ($x\in\Sigma$).
 By the area formula, we see
\begin{align*}
	\int_{K_i} \widetilde{G}(y) {\id\mathcal{H}^{N-1}}(y)
	&= \int_{V_i} \widetilde{G}\left(\left(x,\psi_i(x)\right) A_i \right) \sqrt{1+\left|\nabla\psi_i(x)\right|^2} \id \mathcal{L}^{N-1}(x) \\
	&\leq \sqrt{1+(2\delta)^2} \int_{(U^m_i)_{\nu^i}} \widetilde{G}(x + t_x \nu^i) \id \mathcal{L}^{N-1}(x).
\end{align*}
Thus
\begin{align*}
	\liminf_{\varepsilon\to 0} E^\varepsilon_\mathrm{sMM}(v_\varepsilon)
	&\geq \left( 1+(2\delta)^2 \right)^{-1/2} \sum^m_{i=1} \int_{K_i} \widetilde{G}(x) {\id\mathcal{H}^{N-1}}(x) \\
	&= \left( 1+(2\delta)^2 \right)^{-1/2} \int_{\Sigma_m} \widetilde{G}(x) {\id\mathcal{H}^{N-1}}(x).
\end{align*}
%
Sending $m\to\infty$ and then $\delta\to 0$, we conclude 
\[
	\liminf_{\varepsilon\to 0} E^\varepsilon_\mathrm{sMM}(v_\varepsilon) \geq \int_\Sigma \widetilde{G}(x) {\id\mathcal{H}^{N-1}}(x).
\]

It remains to prove
\[
	\liminf_{\varepsilon\to 0} \int_J \alpha(v_\varepsilon)j {\id\mathcal{H}^{N-1}}
	\geq \int_{J\cap\Sigma} \left( \min_{\xi^-\leq\xi\leq\xi^+} \alpha(\xi) \right)j {\id\mathcal{H}^{N-1}}
\]
when $v_\varepsilon\xrightarrow{sg}\Xi$.
 It suffices to prove that
\[
	\liminf_{\varepsilon\to 0} \int_{J\cap K_i} \alpha(v_\varepsilon) j{\id\mathcal{H}^{N-1}}
	\geq \int_{J\cap K_i} \left( \min_{\xi^-\leq\xi\leq\xi^+} \alpha(\xi) \right)j {\id\mathcal{H}^{N-1}}.
\]
By slicing, we may reduce the problem in a one-dimensional setting.
 If the dimension equals one, this follows directly from the definition of graph convergence.

The proof is now complete.
\end{proof}
%

\section{Construction of recovery sequences} \label{SCRS} 

Our goal in this section is to construct what is called a recovery sequence $\{w_\varepsilon\}$ to establish limsup inequality.
\begin{theorem} \label{SUP}
Let $\Omega$ be a bounded domain in $\mathbf{R}^N$.
 Assume that $F$ satisfies (F1) and (F2').
For $\mathcal{J}=(J,j,\alpha)$, assume that $J$ is countably $N-1$ rectifiable in $\Omega$ with a non-negative $\mathcal{H}^{N-1}$-integrable function $j$ on $J$ and that $\alpha\in C(\mathbf{R})$ is non-negative.
 For any $\Xi\in\mathcal{A}_0$ with $E^{0,{\mathcal{J}}}_\mathrm{sMM}(\Xi,\Omega)<\infty$, there exists a sequence $\{w_\varepsilon\}\subset H^1(\Omega)$ such that
\begin{align*}
	& E^{0,{\mathcal{J}}}_\mathrm{sMM}(\Xi,\Omega) \geq \limsup_{\varepsilon\to 0} E^{\varepsilon,{\mathcal{J}}}_\mathrm{sMM}(w_\varepsilon),\\
	& \lim_{\varepsilon\to 0} d_\nu (\Gamma_{w_\varepsilon},\Xi) = 0\quad \text{for all}\quad \nu \in S^{N-1}.
\end{align*}
In particular, $w_\varepsilon\xrightarrow{sg}\Xi$ in $\mathcal{B}_D$ for any $D\subset S^{{N}-1}$ with $\overline{D}=S^{N-1}$.
 By Theorem \ref{INF}, 
\[
	E^{0,{\mathcal{J}}}_\mathrm{sMM}(\Xi,\Omega) = \lim_{\varepsilon\to 0} E^{\varepsilon,{\mathcal{J}}}_\mathrm{sMM}(w_\varepsilon).
\]
\end{theorem}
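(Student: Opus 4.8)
The plan is to build the recovery sequence $w_\varepsilon$ in two stages: first reduce to the case where $\xi^{\pm}$ are piecewise constant on the $\delta$-flat pieces $K_i$ supplied by Lemma~\ref{CR}, and then, on each $K_i$, paste together one-dimensional optimal profiles transported along a fixed slicing direction $\nu^i \in D$. I would first fix $\delta \in (0,1)$ and use Lemma~\ref{CR} to write $\Sigma = N_0 \cup \bigcup_i K_i$ with the $K_i$ disjoint compact $\delta$-flat sets, $K_i = \{(x,\psi_i(x))A_i \mid x \in V_i\}$, $\|\nabla\psi_i\|_\infty \le \delta$, and $\mathcal{H}^{N-1}(N_0)=0$; truncate to $\Sigma_m = \bigcup_{i\le m} K_i$ and separate the $K_i$ by disjoint open tubes $U_i^m$. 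On $V_i$ the functions $\xi^\pm$ pull back to $\mathcal{H}^{N-1}$-measurable (hence $\mathcal{L}^{N-1}$-measurable after the area-formula change of variables) functions on $\mathbf{R}^{N-1}$; by Lusin/Vitali I approximate them in $L^1(V_i)$ (and a.e.) by step functions constant on a finite partition of $V_i$ into small Lipschitz pieces, arranging the approximating values to satisfy $\xi^- \le 1 \le \xi^+$ and controlling $\int |G(\xi^\pm_{\text{approx}}) - G(\xi^\pm)|$ since $G$ is continuous and the relevant values stay bounded (using $E^{0,\mathcal J}_{\mathrm{sMM}}(\Xi,\Omega)<\infty$). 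A diagonal argument over this approximation, over $m\to\infty$, and over $\delta\to 0$ then reduces everything to the model situation: $\Sigma$ a single $\delta$-flat graph piece, $\xi^\pm$ literally constant on it.

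For the model situation I would choose $\nu = \nu^i \in D$ nearly normal to $P_i$ (as in the proof of Theorem~\ref{INF}), and for $\mathcal{L}^{N-1}$-a.e.\ $x$ in the projection, let $t_x$ be the unique parameter with $x + t_x\nu \in K_i$. On the slice line I put the standard single-well Modica--Mortola optimal profile: the function is identically $1$ away from $t_x$ and, in an $O(\varepsilon|\log\varepsilon|)$-neighborhood of $t_x$, it dips down to $\xi^-$ and back up through $\xi^+$ following the ODE $\varphi' = \pm\sqrt{F(\varphi)}$ that makes the equipartition $\tfrac{\varepsilon}{2}|\varphi'|^2 = \tfrac{1}{2\varepsilon}F(\varphi)$ hold, truncated so that $\varphi$ reattains the value $1$ in finite length; this is exactly the one-dimensional recovery construction of \cite{GOU} (cf.\ \cite{AT,FL}), whose slice energy converges to $2\{G(\xi^-)+G(\xi^+)\}$. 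Assembling these profiles as $w_\varepsilon(z) := \varphi_\varepsilon(\langle z - (x+t_x\nu),\nu\rangle)$ for $z$ with projection $x$ gives an $H^1$ function (one checks the transverse gradient contributes a lower-order term because the profile varies only on an $O(\varepsilon|\log\varepsilon|)$ transverse scale and $K_i$ is $C^1$ with bounded slope), and by Fubini plus the area formula
\[
	\limsup_{\varepsilon\to 0} E^\varepsilon_{\mathrm{sMM}}(w_\varepsilon)
	\le \bigl(1+(2\delta)^2\bigr)^{1/2} \int_{\Sigma_m} 2\bigl\{G(\xi^-)+G(\xi^+)\bigr\}\,\mathrm{d}\mathcal{H}^{N-1}
	+ o(1),
\]
matching $E^0_{\mathrm{sMM}}$ after $m\to\infty$, $\delta\to 0$. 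For the extra term $\int_J \alpha(w_\varepsilon) j\,\mathrm{d}\mathcal{H}^{N-1}$: away from $\Sigma$ we have $w_\varepsilon \to 1$ so $\alpha(w_\varepsilon)j \to \alpha(1)j$ there, and on $J\cap K_i$ the slice of $w_\varepsilon$ sweeps through exactly the interval $[\xi^-,\xi^+]$, so choosing the precise point of $J\cap K_i$ at which $w_\varepsilon$ attains the minimizer of $\alpha$ over $[\xi^-,\xi^+]$ (a measure-zero adjustment on each slice, absorbed because $j$ is integrable and $\mathcal{H}^{N-1}(J\cap K_i \cap \{\text{boundary layer}\}) \to 0$) yields $\limsup \int_{J\cap K_i}\alpha(w_\varepsilon)j \le \int_{J\cap K_i}(\min_{\xi^-\le\xi\le\xi^+}\alpha(\xi))j\,\mathrm{d}\mathcal{H}^{N-1}$; summing and passing to the limit gives the stated bound on $E^{0,\mathcal J}_{\mathrm{sMM}}$.

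Finally, for the topological convergence $d_\nu(\Gamma_{w_\varepsilon},\Xi)\to 0$ for \emph{every} $\nu \in S^{N-1}$ — not just the construction direction — I would argue slicewise: for $\mathcal{L}^{N-1}$-a.e.\ $x\in\Omega_\nu$ the line $\{x+t\nu\}$ meets $\Sigma$ in a $\mathcal{H}^0$-finite set of points (since $\Sigma$ is $(N-1)$-rectifiable and the slicing is transversal for a.e.\ $x$ by the coarea/Fubini structure), near each such point $w_\varepsilon$ on the slice runs through the full interval $[\xi^-,\xi^+]$ over a vanishing $t$-length, and away from those points $w_\varepsilon\to 1$ uniformly on compacts of the slice; hence $\operatorname{graph}\overline{(\Gamma_{w_\varepsilon})_{x,\nu}} \to \operatorname{graph}\overline{\Xi_{x,\nu}}$ in Hausdorff distance, and dominated convergence gives $d_\nu \to 0$. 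The main obstacle I anticipate is precisely the simultaneous control of energy and sliced-graph topology under the piecewise-constant approximation of $\xi^\pm$: one must approximate $\xi^\pm$ in $L^1$ (for the energy) while also ensuring that on a.e.\ slice in every direction the approximate interval $[\xi^-_{\text{approx}},\xi^+_{\text{approx}}]$ Hausdorff-approximates $[\xi^-,\xi^+]$, which forces the partitioning sets to be chosen with some care (e.g.\ with boundaries that are negligible for all the relevant slicing measures), and the bookkeeping of the triple diagonal limit $(\text{approx}) \to 0$, $m\to\infty$, $\delta\to 0$ must be arranged so that the error in each is controlled uniformly in the others.
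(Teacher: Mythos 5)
Your proposal follows the same architecture as the paper's proof: reduce via Lemma~\ref{CR} to a finite union of compact $\delta$-flat pieces with $\xi^\pm$ piecewise constant, build one-dimensional profiles solving $\psi'=\sqrt{F(\psi)}$, pin the value of $w_\varepsilon$ on each $K_i$ at a minimizer of $\alpha$ over $[\xi^-,\xi^+]$, and verify sliced-graph convergence in every direction. However, the steps you leave open are precisely where the work lies, and two of them are genuine gaps as written. On the approximation of $\xi^\pm$: you propose a Lusin/$L^1$ approximation by step functions and then correctly flag that this does not control $d_\nu$ --- but you do not resolve the difficulty. The paper's Lemma~\ref{APP} resolves it by approximating $\xi^+$ \emph{from below by its own level sets} at scale $1/n$ (and $\xi^-$ from above), which yields the uniform bound $\sup|\xi^\pm_n-\xi^\pm|\le 1/n$ and hence Hausdorff convergence of every slice for free, while monotonicity $\Xi_n\subset\Xi$ gives $E^0_{\mathrm{sMM}}(\Xi_n)\le E^0_{\mathrm{sMM}}(\Xi)$ with no limiting argument. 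You also do not address why discarding the tail $\bigcup_{i>m}K_i$ is harmless for the sliced-graph distance; this is not automatic, and the paper proves it via the coarea formula and Proposition~\ref{SEQ}, using the summability of $\sum_i\int_{K_i}\widetilde{G}\,\mathrm{d}\mathcal{H}^{N-1}$ and the strict monotonicity of $G$ to force $\sup_{i\ge m}\sup_{K_i}|\xi^\pm-1|\to 0$ along almost every slice.

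The second gap is in the model construction. Your $w_\varepsilon(z)=\varphi_\varepsilon\bigl(\langle z,\nu\rangle-t_{\pi_\nu(z)}\bigr)$ is a cylinder construction over $V_i$, and it breaks at the lateral boundary: if $\psi_i$ is used on all of $\mathbf{R}^{N-1}$, the transition layer lives over the entire graph of $\psi_i$ rather than over $K_i$, contributing an extra energy proportional to the area of the graph outside $K_i$ that does not vanish as $\varepsilon\to 0$; if instead you set $w_\varepsilon=1$ outside $\pi_\nu^{-1}(V_i)$, the function jumps across the lateral boundary of the cylinder and fails to be in $H^1(\Omega)$; and a transverse cutoff layer of width $r_\varepsilon$ costs roughly $\varepsilon^{-1/2}\mathcal{L}^{N-1}\bigl(\{0<d(\cdot,V_i)<r_\varepsilon\}\bigr)$ in the potential term, which need not vanish for a general compact $V_i$. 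The paper avoids all of this by composing the profile with the signed distance to $K_i$, so that $\{w_\varepsilon\neq 1\}$ is a tube around the compact set $K_i$ whose volume is governed by the Minkowski-content theorem of \cite{Fe}; the price is the more delicate energy computation via the coarea formula, integration by parts against $A(t)$, and Lemma~\ref{ELPF}. Relatedly, you never use the hypothesis (F2'), but it is essential: the ODE profile reaches $1$ only asymptotically, so it must be truncated at $s=1/\sqrt{\varepsilon}$ and glued back to $1$, and the cost of that gluing (as well as the sign $f_\varepsilon'\le 0$ used in the integration by parts) is controlled exactly by the monotonicity of $F$. Your phrase ``truncated so that $\varphi$ reattains the value $1$ in finite length'' conceals this estimate.
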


\subsection{Approximation} \label{SSAP} 

We begin with various approximations.
\begin{lemma} \label{APP}
Assume ther same hypotheses concerning $\Omega$ and $S=(J,j,\alpha)$ as in Theorem \ref{SUP}. Assume that $F$ satisfies (F1).
 Assume $\Xi\in\mathcal{A}_0$ so that its singular set $\Sigma=\left\{ y\in\Omega \bigm| \Xi(y)\neq\{1\} \right\}$ is countably $N-1$ rectifiable.
 Let $\delta$ be an arbitrarily fixed positive number.
 Then, there exists a sequence $\{\Xi_m\}^\infty_{m=1}\subset\mathcal{A}_0$ such that the following properties hold:
\begin{enumerate}
\item[(i)] $E^{0,{\mathcal{J}}}_\mathrm{sMM}(\Xi,\Omega) \geq \limsup_{m\to\infty}E^{0,{\mathcal{J}}}_\mathrm{sMM}(\Xi_m,\Omega)$,
\item[(i\hspace{-1pt}i)] $\lim_{m\to\infty}d_\nu(\Xi_m,\Xi)=0$ for all $\nu\in S^{N-1}$,
\item[(i\hspace{-1pt}i\hspace{-1pt}i)] $\Xi_m(y)\subset\Xi(y)$ for all $y\in\Omega$,
\item[(i\hspace{-1pt}v)] the singular set $\Sigma_m=\left\{ y\in\Omega \bigm| \Xi_m(y)\neq\{1\} \right\}$ consists of a disjoint finite union of compact $\delta$-flat sets $\{K_j\}^k_{j=1}$,
\item[{(v)}] $\xi^+_m$, $\xi^-_m$ are constant functions on each $K_j$ ($j=1,\ldots,k$), where $\Xi_m(y)=\left[\xi^-_m(y), \xi^+_m(y)\right]\ni1$ on $\Sigma_m$.
 Here $k$ may depend on $m$.
\end{enumerate}
\end{lemma}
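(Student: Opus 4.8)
The plan is to construct $\Xi_m$ in three nested stages, peeling off one source of complexity at a time. First I would apply Lemma \ref{CR} with the given $\delta$ to write $\Sigma = N_0 \cup \bigcup_{i=1}^\infty K_i$ as a disjoint union of compact $\delta$-flat sets (plus an $\mathcal{H}^{N-1}$-null set). Truncating this union at level $m$ already gives a \emph{finite} disjoint family of compact $\delta$-flat sets, which will eventually supply property (iv); the tail $\bigcup_{i>m} K_i$ is discarded, which is harmless because $E^{0,\mathcal{J}}_\mathrm{sMM}(\Xi,\Omega)<\infty$ forces $\sum_i \int_{K_i}\{G(\xi^-)+G(\xi^+)\}\,d\mathcal{H}^{N-1}$ to converge, so the tail energy goes to zero. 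Discarding the tail also makes $\Xi_m(y)\subset\Xi(y)$, part of (iii).

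Second, on each of the finitely many retained $K_i$ I would replace the $\mathcal{H}^{N-1}$-measurable functions $\xi^\pm$ by simple functions. By Lusin/standard approximation of measurable functions by step functions (measurable functions being approximable in measure, and $G$ being continuous with $G(1)=0$), one partitions each $K_i$ into finitely many measurable pieces on which $\xi^\pm$ are nearly constant, discards a piece of small $\mathcal{H}^{N-1}$-measure to make the remaining pieces compact (using Borel regularity of $\mathcal{H}^{N-1}$, exactly as in the proof of Lemma \ref{CR}), and sets $\xi^\pm_m$ equal to a constant — chosen between $\xi^-$ and $1$, and between $1$ and $\xi^+$ respectively, so that $1\in[\xi^-_m,\xi^+_m]\subset[\xi^-,\xi^+]$, preserving (iii) and (v). The energy changes by an amount controlled by the uniform continuity of $G$ on a compact interval plus the measure of the discarded pieces, giving the $\limsup$ inequality (i); here one also needs to check the $\int_{J\cap\Sigma}\min_{\xi^-\le\xi\le\xi^+}\alpha(\xi)\,d\mathcal{H}^{N-1}$ term behaves, which follows since shrinking the interval $[\xi^-,\xi^+]$ only increases the minimum of $\alpha$ by a controlled amount by continuity of $\alpha$, and again the discarded set is small. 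After this stage $\xi^\pm_m$ are constant on each of finitely many compact $\delta$-flat pieces, so (iv) and (v) hold; relabel the pieces as $\{K_j\}_{j=1}^k$.

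Third, and this is where the main work lies, I must verify the sliced graph convergence (ii): $d_\nu(\Xi_m,\Xi)\to 0$ for \emph{every} $\nu\in S^{N-1}$, not just those in a countable dense set. The point is that $\Xi_m$ and $\Xi$ differ only on $\Sigma\setminus\Sigma_m$, a set of $\mathcal{H}^{N-1}$-measure tending to zero, together with the places where the interval was shrunk. For a fixed $\nu$, on a slice line through $x$ the set-valued functions $\Xi_{x,\nu}$ and $(\Xi_m)_{x,\nu}$ are singletons $\{1\}$ except at the finitely (or countably) many points where the line meets $\Sigma$; at such a point the graphs differ by a vertical segment of length at most $|\xi^+-\xi^-|$, so $d_g(\overline{(\Xi_m)_{x,\nu}},\overline{\Xi_{x,\nu}})$ is bounded and, crucially, equals zero whenever the line through $x$ avoids $\Sigma\setminus\Sigma_m$ and avoids the shrink set. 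The set of $x\in\Pi_\nu$ whose slice line meets $\Sigma\setminus\Sigma_m$ has $\mathcal{L}^{N-1}$-measure at most (a constant times) $\mathcal{H}^{N-1}(\Sigma\setminus\Sigma_m)$ by the projection/coarea estimate for rectifiable sets (the projection $\pi_\nu$ is $1$-Lipschitz on each flat piece, so $\mathcal{L}^{N-1}(\pi_\nu(\Sigma\setminus\Sigma_m)) \le \mathcal{H}^{N-1}(\Sigma\setminus\Sigma_m)$). Since the integrand defining $d_\nu$ is bounded by $1$ and supported on a set of $\mathcal{L}^{N-1}$-measure going to zero, $d_\nu(\Xi_m,\Xi)\to 0$ uniformly in $\nu$. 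One subtlety to handle carefully: I should check that each $\Xi_m$ genuinely lies in $\mathcal{A}_0 \subset \mathcal{B}_D$, i.e.\ that $\operatorname{graph}\overline{(\Xi_m)_{x,\nu}}$ is compact for a.e.\ $x$ and every $\nu\in D$ — but since $\Sigma_m$ is now a \emph{finite} union of compact flat pieces and $\xi^\pm_m$ are bounded constants on each, a slice line meets $\Sigma_m$ in only finitely many points and the closed graph is automatically compact, so $\Xi_m\in\mathcal{A}_0$. The main obstacle is organizing the second stage so that all of (i), (iii), (v) are simultaneously respected while keeping each piece compact and $\delta$-flat; the convergence (ii) is then a comparatively soft consequence of the smallness of the symmetric difference.
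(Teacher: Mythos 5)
Your overall architecture matches the paper's: decompose $\Sigma$ via Lemma \ref{CR}, truncate to finitely many compact $\delta$-flat pieces, then make $\xi^\pm$ piecewise constant by a level-set/infimum approximation, and finish with a diagonal argument; properties (i), (iii)--(v) come out essentially as you describe. The gap is in your verification of (ii) for the truncation step. You argue that $\Xi_m$ and $\Xi$ agree off $\Sigma\setminus\Sigma_m$, that this set has ``$\mathcal{H}^{N-1}$-measure tending to zero,'' and hence that the integrand of $d_\nu$ is supported on a set of $\mathcal{L}^{N-1}$-measure tending to zero. But finiteness of $E^{0,\mathcal{J}}_\mathrm{sMM}(\Xi,\Omega)$ does \emph{not} imply $\mathcal{H}^{N-1}(\Sigma)<\infty$: the density $2\{G(\xi^-)+G(\xi^+)\}$ vanishes as $\xi^\pm\to1$, so $\Sigma$ may well have infinite $\mathcal{H}^{N-1}$-measure (take countably many flat pieces $K_j$ with $\mathcal{H}^{N-1}(K_j)=1$ and constant values $\xi^\pm_j\to1$ fast enough that $\sum_j\{G(\xi^-_j)+G(\xi^+_j)\}<\infty$). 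Then $\mathcal{H}^{N-1}(\Sigma\setminus\Sigma_m)=\infty$ for every $m$, a slice line can meet the tail for every $m$ on a set of $x$ of positive measure, and your projection bound gives nothing. This is precisely the regime the paper is built for, in contrast to the Ambrosio--Tortorelli case $[\xi^-,\xi^+]\equiv[0,1]$, where the energy really is a constant multiple of $\mathcal{H}^{N-1}(\Sigma)$.

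The paper's remedy is to control not the size of the tail but the size of the intervals over it: pushing the energy onto a.e.\ slice by the co-area formula (Lemma \ref{CAR}, using $J^*\pi_\nu\le1$) gives $\sum_j\int_{(K_j)^1_{x,\nu}}\widetilde{G}_{x,\nu}\,\mathrm{d}\mathcal{H}^0<\infty$ for a.e.\ $x$, and the elementary Proposition \ref{SEQ} then yields $\sup_{j\ge m}\sup_{t\in(K_j)^1_{x,\nu}}|\xi^\pm_{x,\nu}(t)-1|\to0$; since $d_H\left((\Xi_m)_{x,\nu},\Xi_{x,\nu}\right)$ is exactly this supremum over $j\ge m+1$, it tends to zero for a.e.\ $x$, and dominated convergence (the integrand of $d_\nu$ being bounded by $1$) gives (ii). Your argument can be repaired in the same spirit by splitting $\Sigma\setminus\Sigma_m$ into $\{\max(\xi^+-1,\,1-\xi^-)>\eta\}$, which does have finite $\mathcal{H}^{N-1}$-measure by a Chebyshev estimate against the energy density (using that $G(1\pm\eta)>0$ by (F1)), and its complement, on which the sliced Hausdorff distance is at most $\eta$ anyway; but as written the step is false. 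The remainder of your proposal (the infimum approximation with error $\le1/n$, the projection bound for the genuinely small sets discarded in the compact exhaustion, and the compactness of the sliced graphs of $\Xi_m$) is sound and agrees with the paper.
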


We recall an elementary fact.
\begin{proposition} \label{SEQ}
Let $h\in C(\mathbf{R})$ be a non-negative function that satisfies $h(1)=0$ and is strictly monotonically increasing for $\sigma\geq 1$.
 Let $\{a_j\}^\infty_{j=1}$ be a sequence such that $a_j\geq 1$ ($j=1,2,\ldots$) and
\[
	\sum^\infty_{j=1} h(a_j) < \infty.
\]
Then
\[
	\lim_{m\to\infty} \sup_{j\geq m} (a_j-1) = 0.
\]
\end{proposition}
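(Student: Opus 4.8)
The plan is to argue by contradiction. Suppose the conclusion fails; then $\limsup_{m\to\infty}\sup_{j\geq m}(a_j-1)=:L>0$, so there is a subsequence $\{a_{j_k}\}$ with $a_{j_k}-1\geq L/2$ for all $k$, and in particular $a_{j_k}\geq 1+L/2$ for all $k$. Since $h$ is non-negative and strictly monotonically increasing on $[1,\infty)$, we have $h(a_{j_k})\geq h(1+L/2)=:c>0$ for every $k$ (strict monotonicity and $h(1)=0$ force $h(1+L/2)>0$). But then the series $\sum_j h(a_j)$ contains infinitely many terms each bounded below by the positive constant $c$, so $\sum_{j=1}^\infty h(a_j)=\infty$, contradicting the hypothesis.

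The key steps, in order, are: (1) negate the conclusion and extract a subsequence bounded away from $1$; (2) use continuity together with strict monotonicity of $h$ past $1$ and $h(1)=0$ to conclude $h$ is bounded below by a strictly positive constant on $[1+L/2,\infty)$ — here one only needs $h(1+L/2)>0$, which follows since $h(1+L/2)>h(1)=0$; (3) invoke divergence of a series with infinitely many terms bounded below by a positive constant to reach the contradiction with summability.

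There is essentially no obstacle here; the statement is an elementary real-analysis fact. The only point requiring a moment's care is step (2): one must make sure that strict monotonicity of $h$ on $[1,\infty)$ genuinely yields $h(\sigma)\geq h(1+L/2)>0$ for all $\sigma\geq 1+L/2$, which is immediate since $\sigma\geq 1+L/2>1$ gives $h(\sigma)\geq h(1+L/2)$ and $h(1+L/2)>h(1)=0$. Continuity of $h$ is not even strictly needed for the argument as stated, but it is harmless to assume it. Summability of $\{h(a_j)\}$ then forces $h(a_j)\to 0$, which would be incompatible with a subsequence staying $\geq c>0$.
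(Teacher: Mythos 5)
Your proof is correct. The argument is the standard contrapositive: a subsequence bounded away from $1$ forces infinitely many terms of the series to exceed the fixed positive constant $h(1+L/2)>h(1)=0$, contradicting summability. The paper instead argues directly, writing $h\bigl(\sup_{j\geq m}a_j\bigr)=\sup_{j\geq m}h(a_j)\leq\sum_{j\geq m}h(a_j)\to 0$ and then inverting via strict monotonicity. The two routes use the same ingredients (strict monotonicity of $h$ past $1$ and vanishing of the tail of a convergent series), but yours has a small advantage: the paper's identity $h(\sup a_j)=\sup h(a_j)$ tacitly needs continuity of $h$ when the supremum is not attained, and requires the supremum to be finite, whereas your subsequence argument needs neither. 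The only cosmetic point in your write-up is the case $L=+\infty$ (the quantity $\sup_{j\geq m}(a_j-1)$ is nonincreasing in $m$, so its limit exists but could be infinite); there you should replace $L/2$ by, say, $\min(L,1)/2$, which changes nothing of substance.
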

\begin{proof}
By monotonicity of $h$ for $\sigma\geq 1$, we observe that
\[
	h \left( \sup_{j\geq m} a_j \right) 
	= \sup_{j\geq m} h(a_j) 
	\leq \sum_{j\geq m} h(a_j) \to 0 
\]
as $m\to\infty$.
 This yields the desired result since $h(\sigma)$ is strictly monotone for $\sigma \geq 1$.
\end{proof}

We next recall a special case of co-area formula \cite[12.7]{Sim} for a countably rectifiable set.
\begin{lemma} \label{CAR}
Let $\Sigma$ be a countably $N-1$ rectifiable set on $\Omega$, and let $g$ be an $\mathcal{H}^{N-1}$-measurable function on $\Sigma$.
 For $\nu\in S^{N-1}$, let $\pi_\nu$ denote the restriction on $\Sigma$ of the orthogonal projection from $\mathbf{R}^N$ to $\Pi_\nu$.
 Then
\[
	\int_\Sigma gJ^*\pi_\nu {\id\mathcal{H}^{N-1}}
	= \int_{\Omega_\nu} \left( \int_{\Sigma^1_{x,\nu}} g_{x,\nu} (t)\id\mathcal{H}^0(t) \right) \id \mathcal{L}^{N-1}(x).
\]
Here $J^*f$ denotes the Jacobian of a mapping $f$ from $\Sigma$ to $\Pi_\nu$.
\end{lemma}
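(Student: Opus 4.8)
The plan is to read off the identity from the coarea formula on rectifiable sets in the equidimensional case, i.e.\ from \cite[12.7]{Sim} with source and target dimension both equal to $N-1$. Recall that if $M\subset\mathbf{R}^N$ is a countably $m$-rectifiable, $\mathcal{H}^m$-$\sigma$-finite set and $f\colon M\to\mathbf{R}^n$ is Lipschitz with $n\le m$, then
\[
	\int_M g\,J^* f\id\mathcal{H}^m=\int_{\mathbf{R}^n}\Bigl(\int_{M\cap f^{-1}(y)}g\id\mathcal{H}^{m-n}\Bigr)\id\mathcal{L}^n(y)
\]
for every nonnegative $\mathcal{H}^m$-measurable $g$, and hence, by splitting into positive and negative parts, for every $\mathcal{H}^m$-integrable $g$. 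Since $\Sigma$ is countably $N-1$ rectifiable it is contained, up to an $\mathcal{H}^{N-1}$-null set, in a countable union of Lipschitz images of $\mathbf{R}^{N-1}$ (equivalently, one may use the decomposition of Lemma~\ref{CR} into compact $\delta$-flat pieces, which also exhibits $\Sigma$ as $\mathcal{H}^{N-1}$-measurable and $\sigma$-finite); so the hypotheses are met with $M=\Sigma$, and it then remains only to rewrite the right-hand side in the slicing notation of Section~\ref{SSGC}.

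First I would fix a linear isometry $L\colon\Pi_\nu\to\mathbf{R}^{N-1}$ and apply the formula to $f=L\circ\pi_\nu$ with $m=n=N-1$. Since $L$ is an isometry, $J^*(L\circ\pi_\nu)=J^*\pi_\nu$, and $L$ pushes $\mathcal{L}^{N-1}$ on $\Pi_\nu$ forward to $\mathcal{L}^{N-1}$ on $\mathbf{R}^{N-1}$, so the outer integral becomes $\int_{\Omega_\nu}(\cdots)\id\mathcal{L}^{N-1}(x)$ once one observes that the fiber over $x$ is empty unless $x\in\Sigma_\nu\subset\Omega_\nu$. For such $x$, the fiber is
\[
	\Sigma\cap\pi_\nu^{-1}(x)=\{\,x+t\nu\mid x+t\nu\in\Sigma\,\}=\Sigma_{x,\nu},
\]
and $t\mapsto x+t\nu$ is an isometry from $\mathbf{R}$ onto the line through $x$ with direction $\nu$; it therefore identifies $\Sigma^1_{x,\nu}$ with $\Sigma_{x,\nu}$, transforms $\mathcal{H}^0$ into $\mathcal{H}^0$, and transforms $g|_{\Sigma_{x,\nu}}$ into $g_{x,\nu}$. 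Because $m-n=0$, the inner integral is with respect to $\mathcal{H}^0$, the counting measure, so it equals $\int_{\Sigma^1_{x,\nu}}g_{x,\nu}(t)\id\mathcal{H}^0(t)$. Collecting these identifications yields the asserted equality.

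The one point that will need care is purely bookkeeping: one must check that the Jacobian $J^*\pi_\nu$ occurring in \cite[12.7]{Sim} is exactly the factor in the statement — when $m=n$ the coarea and area Jacobians coincide and equal the absolute value of the determinant of the tangential differential $d^\Sigma\pi_\nu$ at $\mathcal{H}^{N-1}$-a.e.\ point of $\Sigma$, which is the meaning of $J^*\pi_\nu$ adopted here — and that the fiber measure $\mathcal{H}^0$ on $\Sigma_{x,\nu}$ does correspond to the counting measure on $\Sigma^1_{x,\nu}$ under the isometric parametrization. No genuine obstacle arises: the statement is a transcription of the classical coarea formula into the slicing notation, and in the applications of this paper it is used only with $g=\widetilde G\ge0$, so even the splitting into positive and negative parts is unnecessary.
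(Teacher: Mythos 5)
Your proposal is correct and follows exactly the route the paper intends: the lemma is stated there as "a special case of the co-area formula \cite[12.7]{Sim}" with no further proof, and your argument simply carries out the specialization $m=n=N-1$ together with the identification of the fibers $\Sigma\cap\pi_\nu^{-1}(x)$ with the slices $\Sigma^1_{x,\nu}$ and of $\mathcal{H}^0$ with counting measure. The bookkeeping points you flag (the equality of area and coarea Jacobians in the equidimensional case, and the $\sigma$-finiteness/measurability of $\Sigma$ via the decomposition of Lemma~\ref{CR}) are exactly the right ones to check.
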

%
%
\begin{proof}[Proof of {Theorem} \ref{APP}] We divide the proof into two steps.

\noindent
\textit{Step 1.}
 We shall construct $\Xi_m$ satisfying (i)--(i\hspace{-1pt}v).

By Lemma \ref{CR}, we found a disjoint family of compact $\delta$-flat sets $\{K_j\}^\infty_{j=1}$ such that $\Sigma=\bigcup^\infty_{j=1}K_j$ up to $\mathcal{H}^{N-1}$-measure zero set for $\Sigma$ associated with $\Xi$.
 By the co-area formula (Lemma \ref{CAR}) and $J^*\pi_\nu\leq 1$, we observe
\begin{equation} \label{ACA}
	\begin{split}
		\int_{K_j} \widetilde{G}(y) {\id\mathcal{H}^{N-1}}(y)
		&\geq \int_{K_j} \widetilde{G} J^*\pi_\nu {\id\mathcal{H}^{N-1}}\\
		&= \int_{\Omega_\nu} \left(\int_{(K_j)^1_{x,\nu}} \widetilde{G}_{x,\nu}(t)\id\mathcal{H}^0(t) \right) \id \mathcal{L}^{N-1}(x),
	\end{split}
\end{equation}
where $\widetilde{G}(y)=2\bigl(G\left(\xi^+(y)\right)+G\left(\xi^-(y)\right)\bigr)$.
 Since $E^{0,{\mathcal{J}}}_\mathrm{sMM}(\Xi,\Omega)<\infty$, we see that
\begin{equation} \label{BU}
	\sum^\infty_{j=1} \int_{K_j} \widetilde{G} {\id\mathcal{H}^{N-1}}(y) < \infty.
\end{equation}
We then take
\begin{equation*}
\Xi_m(y)= \left \{
\begin{array}{cl}
\left[\xi^-(y), \xi^+(y)\right] &,\ y\in\Sigma_m = \bigcup^m_{j=1} K_j \\
\{1\} &,\ \text{otherwise.}
\end{array}
\right.
\end{equation*}
By definition, (i), (i\hspace{-1pt}i\hspace{-1pt}i), and (i\hspace{-1pt}v) are trivially fulfilled.

It remains to prove (i\hspace{-1pt}i).
 By \eqref{ACA} and \eqref{BU}, we observe that
\[
	\sum^\infty_{j=1}\int_{\Omega_\nu} \left(\int_{(K_j)^1_{x,\nu}} \widetilde{G}_{x,\nu}(t)\id\mathcal{H}^0(t) \right) \id \mathcal{L}^{N-1}(x) < \infty
\]
for $\Xi$.
 Since all integrands are non-negative, the monotone convergence theorem implies that
\[
	\sum^\infty_{j=1}\int_{\Omega_\nu} \left(\int_{(K_j)^1_{x,\nu}} \widetilde{G}_{x,\nu}\id\mathcal{H}^0 \right) \id \mathcal{L}^{N-1}(x)
	= \int_{\Omega_\nu} \left( \sum^\infty_{j=1} \int_{(K_j)^1_{x,\nu}} \widetilde{G}_{x,\nu}\id\mathcal{H}^0 \right) \id \mathcal{L}^{N-1}(x).
\]
Thus
\[
	 \sum^\infty_{j=1} \int_{(K_j)^1_{x,\nu}}\widetilde{G}_{x,\nu}\id\mathcal{H}^0 < \infty
\]
for $\mathcal{L}^{N-1}$-a.e.\ $x\in\Omega_\nu$.
 Proposition \ref{SEQ} yields 
\[
	\lim_{m\to 0} \sup_{j\geq m} \sup_{t\in(K_j)^1_{x,\nu}} \left(\xi^+_{x,\nu}(t)-1\right) = 0
\]
and, similarly,
\[
	\lim_{m\to 0} \sup_{j\geq m} \sup_{t\in(K_j)^1_{x,\nu}} \left(1-\xi^-_{x,\nu}(t)\right) = 0.
\]
Since
\[
	d_H\left(\left(\Xi_m\right)_{x,\nu}, \Xi_{x,\nu}\right) 
	= \sup_{j\geq m+1} \sup_{t\in(K_j)^1_{x,\nu}} \max\left\{ \left|\xi^+_{x,\nu}(t)-1\right|, \left|\xi^-_{x,\nu}(t)-1\right| \right\},
\]
we conclude that
\[
	d_H\left(\left(\Xi_m\right)_{x,\nu}, \Xi_{x,\nu}\right) \to 0
\]
as $m\to\infty$ for a.e.\ $x\in\Omega_\nu$.
 Since the integrand of
\[
	d_\nu\left(\Xi_m,\Xi\right) = \int_{\Omega_\nu}
	\frac{d_H\left(\left(\Xi_m\right)_{x,\nu}, \Xi_{x,\nu}\right)}{1+d_H\left(\left(\Xi_m\right)_{x,\nu}, \Xi_{x,\nu}\right)} \id \mathcal{L}^{N-1}(x)
\]
is bounded by $1$, the Lebesgue dominated convergence theorem implies (i\hspace{-1pt}i).

\noindent
\textit{Step 2.}
 We next approximate $\Xi_m$ constructed by Step 1 and construct a sequence $\{\Xi_{m_k}\}^\infty_{k=1}$ satisfying (i)--{(v)} by replacing $\Xi$ with $\Xi_m$.
 If such a sequence exists, a diagonal argument yields the desired sequence.

We may assume that
\begin{equation*}
\Xi(y)= \left \{
\begin{array}{cl}
\left[\xi^-(y), \xi^+(y)\right], & y\in\Sigma_m = \bigcup^m_{j=1} K_j \\
\{1\}  &,\ \text{otherwise.}
\end{array}
\right.
\end{equation*}
We approximate $\xi^+$ from below.
 For a given integer $n$, we set
\[
	\xi^+_n(y) := \inf \left\{ \xi^+(z) \Bigm| z \in I^k_n \right\}, \quad
	I^k_n := \left\{ y \in\Sigma_m \biggm| \frac{k-1}{n} \leq \xi^+(y)-1 < \frac{k}{n} \right\}
\]
for $k=1,2,\ldots$.
 Since $I^k_n$ is $\mathcal{H}^{N-1}$-measurable set, as in the proof of Lemma \ref{CR}, $I^k_n$ is decomposed as a countable disjoint family of compact sets up to $\mathcal{H}^{N-1}$-measure zero set.
 We approximate $\xi^-$ from above similarly, and we set
\begin{equation*}
\Xi_{m,n}(y)= \left \{
\begin{array}{cl}
\left[\xi^-_n(y), \xi^+_n(y)\right] & ,\ y\in\Sigma_m \\
\{1\}  & ,\ \text{otherwise.}
\end{array}
\right.
\end{equation*}
It is easy to see that $\Xi_{m,n}$ satisfies (i\hspace{-1pt}i\hspace{-1pt}i) and (i\hspace{-1pt}v) by replacing $m$ with $n$.
 Since $E^0_\mathrm{sMM}(\Xi,\Omega)\geq E^0_\mathrm{sMM}(\Xi_{m,n},\Omega)$ and
\[
	\min_{\xi^-_n(y)\leq\xi\leq\xi^+_{{n}}(y)} j(y)\alpha(\xi)
	\to \min_{\xi^-(y)\leq\xi\leq\xi^+(y)} j(y)\alpha(\xi) \ 
	\text{as}\ n \to \infty \ \text{for}\ \mathcal{H}^{N-1}\text{-a.e.}\ y
\]
with bound $j(y)\alpha(1)$, the property (i) follows from the Lebesgue dominated convergence theorem.
 Since
\[
	d_H\left(\left(\Xi_{m,n}\right)_{x,\nu}, \Xi_{x,\nu}\right) 
	= \sup_{t\in(\Sigma_m)^1_{x,\nu}} \max\left\{ \left|\xi^+_{x,\nu}-\xi^+_{n,x,\nu}\right|, \left|\xi^-_{x,\nu}-\xi^-_{n,x,\nu}\right| \right\} \leq 1/n,
\]
we now conclude (i\hspace{-1pt}i) as discussed at the end of Step 1.
\end{proof}

\subsection{Recovery sequences} \label{SSRC} 

In this subsection, we shall prove Theorem \ref{SUP}. 
 An essential step is constructing a recovery sequence $\{w_\varepsilon\}$ when $\Xi$ has a simple structure, and the basic idea is similar to that of \cite{AT,FL}.
 Besides generalization to general $F$ satisfying (F1) and (F2') from $F(z)=(z-1)^2$, our situation is more involved because $\Xi(y)=[0,1]$ for $y\in\Sigma$ in their case, while in our case, $\Xi(y)=\left[\xi^-(y),\xi^+(y)\right]$ for a general $\xi^-\leq1\leq\xi^+$.
 Moreover, we must show the convergence in $d_\nu$ and handle the $\alpha$-term.
\begin{lemma} \label{REC} 
Assume the same hypotheses concerning $\Omega$, $F$, and $\mathcal{J}=(J,j,\alpha)$ as {in} Theorem \ref{SUP}.
 For $\Xi\in\mathcal{A}_0$, assume that its singular set $\Sigma=\left\{x\in\Omega \mid \Xi(x)\neq\{1\} \right\}$ consists of a disjoint finite union of compact $\delta$-flat sets $\{K_j\}^k_{j=1}$, and $\xi^-$ and $\xi^+$ are constant functions in each $K_j$ ($j=1,\ldots,k$), where $\Xi(x)=[\xi^-,\xi^+]$ on $\Sigma$.
 Then there exists a sequence $\{w_\varepsilon\}\subset H^1(\Omega)$ such that
\begin{align*}
	& E^{0,{\mathcal{J}}}_\mathrm{sMM}(\Xi,\Omega) \geq \limsup_{\varepsilon\to 0} E^{\varepsilon,{\mathcal{J}}}_\mathrm{sMM}(w_\varepsilon), \\
	& \lim d_\nu(\Gamma_{w_\varepsilon}, \Xi) = 0 
	\quad\text{for all}\quad \nu \in S^{N-1}.
\end{align*}
\end{lemma}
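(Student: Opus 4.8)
\textbf{Proof strategy for Lemma \ref{REC}.}
The plan is to localize near each $\delta$-flat piece $K_j$ and paste together one-dimensional profiles along the direction $\nu_j\in S^{N-1}$ normal to the hyperplane $P_j$ supporting $K_j$. Since the $K_j$ are disjoint compact sets, I can choose disjoint open neighborhoods $U_j\supset K_j$; outside $\bigcup_j U_j$ I simply set $w_\varepsilon\equiv1$, which costs no energy because $F(1)=0$ and, by (F1)--(F2'), $\alpha(1)$ is the value already appearing in $E^{0,\mathcal{J}}_\mathrm{sMM}$ through $\min_{\xi^-\le\xi\le\xi^+}\alpha$ at jump points not in $\Sigma$. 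Inside $U_j$ I use a signed-distance-type coordinate $s=s_j(y)$ to $P_j$ (equivalently the $\nu_j$-component after the rotation $A_j$), and write $w_\varepsilon(y)=\varphi^j_\varepsilon(s_j(y))$, where $\varphi^j_\varepsilon\colon\mathbf{R}\to\mathbf{R}$ is the standard Modica--Mortola optimal-profile transition that goes from $1$ down (or up) to the constant values $\xi^-_j,\xi^+_j$ and back to $1$, glued from the two one-sided profiles solving $\dot\varphi=\pm\sqrt{F(\varphi)}$, rescaled at speed $\varepsilon$ and truncated at width $\sim\varepsilon\log(1/\varepsilon)$ so that $w_\varepsilon\in H^1$ and equals $1$ near $\partial U_j$. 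This is exactly the construction of \cite{AT,FL}, adapted to general $F$ via (F2') which guarantees monotonicity of the profile and finiteness of $G(\xi^\pm_j)$.

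The energy estimate then splits as $E^{\varepsilon,\mathcal{J}}_\mathrm{sMM}(w_\varepsilon)=\sum_j\int_{U_j}\{\tfrac\varepsilon2|\nabla w_\varepsilon|^2+\tfrac1{2\varepsilon}F(w_\varepsilon)\}\,\mathrm{d}\mathcal{L}^N+\int_J\alpha(w_\varepsilon)j\,\mathrm{d}\mathcal{H}^{N-1}$. For the bulk term I slice along $\nu_j$ using the disintegration; on each line the inner integral converges to $2\{G(\xi^-_j)+G(\xi^+_j)\}$ by the classical one-dimensional computation (this is where the $1+\frac{|\nabla w_\varepsilon|^2}{\cdots}$ bound from $2ab\le a^2+b^2$ is used, and where $\delta$-flatness of $K_j$ enters: the Jacobian factor $\sqrt{1+|\nabla\psi_j|^2}$ relating $\mathrm{d}\mathcal{H}^{N-1}$ on $K_j$ to $\mathrm{d}\mathcal{L}^{N-1}$ on $(U_j)_{\nu_j}$ is $\le\sqrt{1+\delta^2}$, which, in the limsup direction, can actually be arranged to converge to $1$ because we are free to make $U_j$ shrink toward $K_j$ while keeping $w_\varepsilon$'s transition localized, so the overcounting is negligible; more carefully one picks $U_j$ with $\mathcal{H}^{N-1}(P_j$-projection$)$ close to $\mathcal{H}^{N-1}(K_j)$). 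Integrating over $(U_j)_{\nu_j}$ and summing over the finitely many $j$ gives $\limsup_\varepsilon\le 2\int_\Sigma\{G(\xi^-)+G(\xi^+)\}\,\mathrm{d}\mathcal{H}^{N-1}$. For the $\alpha$-term, on $J\cap K_i$ the profile $w_\varepsilon$ sweeps the whole interval $[\xi^-_i,\xi^+_i]$, so by continuity of $\alpha$ and dominated convergence $\int_{J\cap K_i}\alpha(w_\varepsilon)j\,\mathrm{d}\mathcal{H}^{N-1}\to\int_{J\cap K_i}(\min_{\xi^-\le\xi\le\xi^+}\alpha(\xi))j\,\mathrm{d}\mathcal{H}^{N-1}$ once I choose the profile so that it spends asymptotically vanishing $\mathcal{H}^{N-1}$-measure near values where $\alpha$ is not near its minimum — again a standard design of the transition, exploiting (F2').

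For the topological conclusion $d_\nu(\Gamma_{w_\varepsilon},\Xi)\to0$ for \emph{every} $\nu\in S^{N-1}$, I argue slicewise: fix $\nu$ and $\mathcal{L}^{N-1}$-a.e.\ $x\in\Omega_\nu$; the line $x+\mathbf{R}\nu$ meets $\bigcup_j K_j$ in finitely many points (for a.e.\ $x$, by the coarea/area formula and rectifiability it meets each $K_j$ in finitely many points, typically one). Near each such point the restricted function $(w_\varepsilon)_{x,\nu}$ is a one-dimensional transition of width $O(\varepsilon\log\tfrac1\varepsilon)\to0$ with range converging to $[\xi^-_j,\xi^+_j]$, hence $\operatorname{graph}\overline{(w_\varepsilon)_{x,\nu}}$ converges in Hausdorff distance to $\operatorname{graph}\overline{\Xi_{x,\nu}}$, i.e.\ $d_g\to0$ pointwise a.e.\ in $x$; since the integrand $d_g/(1+d_g)\le1$, dominated convergence gives $d_\nu(\Gamma_{w_\varepsilon},\Xi)\to0$. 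Taking $D$ countable dense then yields $w_\varepsilon\xrightarrow{sg}\Xi$. \textbf{The main obstacle} I anticipate is not any single estimate but the simultaneous bookkeeping: one must choose the truncation width, the neighborhoods $U_j$, and the shape of the profile so that (a) the Jacobian overcounting from $\delta$-flatness does not spoil the \emph{upper} bound (handled by shrinking $U_j$, so it is really a limit $U_j\downarrow K_j$ taken \emph{after} $\varepsilon\to0$, or by a diagonal argument), (b) the $\alpha$-term picks out exactly the minimum and not an average, and (c) the sliced-graph convergence holds for \emph{all} directions $\nu$, including those far from the $\nu_j$, where the slice cuts $K_j$ obliquely — there the transition is still thin because $w_\varepsilon$ only depends on the single coordinate $s_j$ and $|\nabla s_j|=1$, so obliqueness only stretches the profile by a bounded factor $1/|\langle\nu,\nu_j\rangle|$ which is still $o(1)$-wide; the degenerate directions $\nu\perp\nu_j$ form a $\mathcal{L}^{N-1}$-null set of $x$'s on the relevant hyperplane and are discarded.
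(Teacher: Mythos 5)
Your overall architecture (localize in disjoint neighborhoods $U_j$, compose a one-dimensional optimal profile solving $\dot\varphi=\pm\sqrt{F(\varphi)}$ with a normal coordinate, slice, and conclude $d_\nu\to0$ by dominated convergence) is the same as the paper's, but the choice of normal coordinate contains a genuine error. You take $s_j(y)$ to be the signed distance to the supporting hyperplane $P_j$ (``the $\nu_j$-component after the rotation $A_j$''). Since $K_j$ is only $\delta$-flat, it is the graph of a genuinely non-constant $C^1$ function $\psi_j$, so $s_j$ does not vanish on $K_j$: it takes values up to $\sup_{V_j}|\psi_j|$, a fixed positive number independent of $\varepsilon$. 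Hence for small $\varepsilon$ the transition layer $\{|s_j|\lesssim\sqrt{\varepsilon}\}$ concentrates on $P_j\cap U_j$ and misses most of $K_j$, where your $w_\varepsilon\equiv1$. Two of the required conclusions then fail: the sliced graphs $(w_\varepsilon)_{x,\nu}$ converge to a set-valued function whose singular set lies on $P_j$, not on $K_j$, so $d_\nu(\Gamma_{w_\varepsilon},\Xi)\not\to0$; and $w_\varepsilon$ is not constant on $K_j$, so $\int_{J\cap K_j}\alpha(w_\varepsilon)j\,\mathrm{d}\mathcal{H}^{N-1}$ need not tend to $\int_{J\cap K_j}\bigl(\min_{\xi^-\le\xi\le\xi^+}\alpha(\xi)\bigr)j\,\mathrm{d}\mathcal{H}^{N-1}$. (Your heuristic for the $\alpha$-term --- the profile ``spends asymptotically vanishing measure near values where $\alpha$ is not near its minimum'' --- also misidentifies what must be controlled: it is the value of $w_\varepsilon$ \emph{at} the points of the fixed $(N-1)$-dimensional set $J\cap K_j$, not the measure of a sublevel set of the profile.)

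The fix, and the paper's route, is to set $w_\varepsilon=\varphi_\varepsilon(\mathrm{sd}(\cdot))$ where $\mathrm{sd}$ is the signed distance to the graph of $\psi_j$ itself, with the (asymmetric) profile shifted by $s_0=O(\varepsilon^{3/2})$ so that $w_\varepsilon\equiv\eta:=\operatorname{argmin}_{[\xi^-,\xi^+]}\alpha$ exactly on $K_j$; this makes the $\alpha$-term exact and places the transition on $K_j$, after which Lemma~\ref{HAUS} gives the slicewise graph convergence for every $\nu$. The energy is then computed not by flat slicing plus a Jacobian bound but by the coarea formula for $\mathrm{sd}$ combined with the Minkowski-content identity $\mathcal{L}^N(\{|\mathrm{sd}|<t\})/(2t)\to\mathcal{H}^{N-1}(K_j)$ and an integration by parts that uses (F2') (monotonicity of $t\mapsto F(\psi(t/\varepsilon,c))$) and Lemma~\ref{ELPF} to kill the boundary term; this yields exactly $2\{G(\xi^-)+G(\xi^+)\}\mathcal{H}^{N-1}(K_j)$ with no $\delta$-dependent loss. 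In particular, your anticipated obstacle (a) --- the $\sqrt{1+\delta^2}$ overcounting and the order of the limits $U_j\downarrow K_j$ versus $\varepsilon\to0$ --- does not actually arise in the correct construction, whereas the real difficulty (keeping the transition centered on the curved set $K_j$ while still controlling the level-set areas of the distance function) is the one your proposal glosses over.
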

This lemma follows from the explicit construction of functions $\{w_\varepsilon\}$ similarly to the standard double-well Modica--Mortola functional.
\begin{proof}
We take a disjoint family of open sets $\{U_j\}^k_{j=1}$ with the property $K_j\subset U_j$.
 It suffices to construct a desired sequence $\{w_\varepsilon\}$ so that the support of $w_\varepsilon-1$ is contained in $\bigcup^k_{j=1}U_j$, so we shall construct such $w_\varepsilon$ in each $U_j$.
 We may assume $k=1$ and write $K_1, U_1$ by $K, U$, and $\xi_-,\xi_+$ by $a,b$ ($a\leq1\leq b$) so that
\begin{equation*}
\Xi(y)= \left \{
\begin{array}{cl}
[a, b] & ,\ y\in K, \\
\{1\} & ,\ y \in U \backslash K.
\end{array}
\right.
\end{equation*}
%

For $c<1$ and $s>0$, let $\psi(s,c)$ be a function determined by
\[
	\int^\psi_c \frac{1}{\sqrt{F(z)}} \id z = s.
\]
By (F1), this equation is uniquely solvable for all $s\in[0,s_*)$ with
\[
	s_* := \int^1_c \frac{1}{\sqrt{F(z)}}\id z.
\]
This $\psi(s,c)$ solves the initial value problem 
\begin{equation} \label{SR}
\left \{
\begin{array}{l}
\displaystyle{\frac{\mathrm{d}\psi}{\mathrm{d}s}} = \sqrt{F(\psi)}, \quad s \in (0,s_*) \\
\psi(0,c)=c,
\end{array}
\right.
\end{equation}
although this ODE may admit many solutions.
 For $c>1$, we parallelly define $\psi$ by 
\[
	\int^c_\psi \frac{1}{\sqrt{F(z)}}\id z = s
\]
for $s\in(0,s_*)$ with
\[
	s_* := \int^c_1 \frac{1}{\sqrt{F(z)}} \id z.
\]
In this case, $\psi$ also solves \eqref{SR}.
 We consider the even extension of $\psi$ (still denoted by $\psi$) for $s<0$ so that $\psi(s,c)=\psi(-s,c)$. 
 For the case $c=1$, we set $\psi(s,c)\equiv 1$.
 For $a,b$ with $[a,b]\ni 1$, we consider a rescaled function $\psi_\varepsilon(s,\cdot)=\psi(s/\varepsilon,\cdot)$ and then define
	\begin{equation*}
		\Psi_\varepsilon(s,a,b)= \left \{
		\begin{array}{ll}
			1 & ,\quad  s \leq -2\sqrt{\varepsilon} \\ 
			\alpha_1 s + \beta_1 & , \quad -2\sqrt{\varepsilon} \leq s \leq -\sqrt{\varepsilon}\\
			\psi_\varepsilon(-s,a) & ,\quad -\sqrt{\varepsilon} \leq s \leq 0\\
			\psi_\varepsilon(s,a) & ,\quad 0 \leq s \leq \sqrt{\varepsilon}\\
			\alpha_2 s + \beta_2 & ,\quad \sqrt{\varepsilon} \leq s \leq 2\sqrt{\varepsilon}\\
			\psi_\varepsilon(s-3\sqrt{\varepsilon},b) & ,\quad 2\sqrt{\varepsilon} \leq s \leq 4\sqrt{\varepsilon}\\
			\alpha_3 s + \beta_3 & ,\quad 4\sqrt{\varepsilon} \leq s \leq 5\sqrt{\varepsilon}\\
			1 & ,\quad 5\sqrt{\varepsilon} \leq s
		\end{array}
		\right.
\end{equation*}
with $\alpha_i,\beta_i\in\mathbf{R}$ ($i=1,2,3$)  so that $\Psi_\varepsilon$ is Lipschitz continuous.
 \begin{figure}[thbp]
\centering
\includegraphics[width=0.5\linewidth]{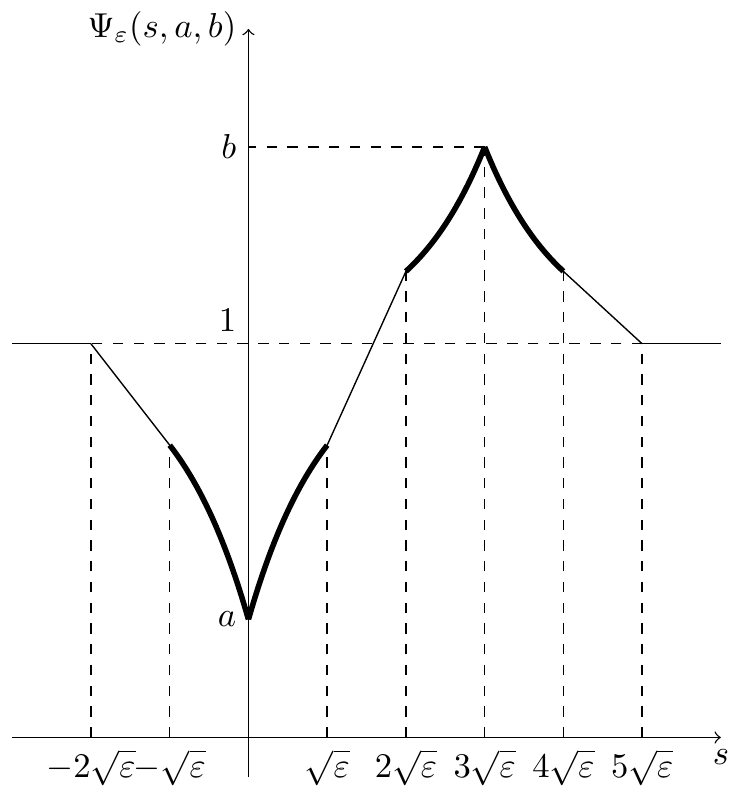}
\label{fig:psi-graph}
\caption{The graph of $\Psi_\varepsilon(s,a,b)$. Thick lines are the part of the graph of $\psi_\varepsilon(s,a)$ or $\psi_\varepsilon(s,b)$, and other parts are linear.}
\end{figure}

 Let $\eta$ be a minimizer of $\alpha$ in $[a,b]$.
 We first consider the case when $\eta<1$ so that $a\leq\eta<1$.
 In this case, by definition of $\Psi_\varepsilon$, there is a unique $s_0>0$ such that $\Psi_\varepsilon(s_0,a,b)=\eta$.
 We then set
\[
	\varphi_\varepsilon(s,a,b) = \Psi_\varepsilon(s+s_0,a,b).
\]
For the case $\eta\geq1$, we take the smallest positive $s_0>0$ such that $\Psi_\varepsilon(s_0,a,b)=\eta$.
 This $s_0=s_0(\varepsilon)$ is of order $\varepsilon^{3/2}$ as $\varepsilon\to 0$. Since $K$ is a $\delta$-flat surface, it is on the graph of a $C^1$ function $p$. So we can write 
 	\[K = \{(x',p(x'))  \mid x' \in V\}.\]  We set $A := \{(x,z) \mid p(x) \geq z \}$ and $B := \{(x,z) \mid p(x) < z \}.$
 	Let $\mathrm{sd}(z)$ be the signed distance of $z$ from $K$, i.e.
 	\[\mathrm{sd}(z) := d({z},A) - d({z},B).\]
 	If $\mathrm{sd}(z)$ is non-negative then we simply write it by $d(z).$  We then take
\[
	w_\varepsilon(z) = \varphi_\varepsilon \left(\mathrm{sd}(z),a,b\right),
\]
 This is the desired sequence such that the support of $w_\varepsilon-1$ is contained in $U$ for sufficiently small $\varepsilon>0$.
 Since $w_\varepsilon$ is Lipschitz continuous, it is clear that  $w_\varepsilon\in H^1(\Omega)$.
 Since
\[
	\nabla w_\varepsilon = (\partial_s \Psi_\varepsilon) \left( \mathrm{sd}(z)+s_0,a,b \right) \nabla \mathrm{sd}(z),
\] 
we have {for $|\mathrm{sd}(z)| <\sqrt{\varepsilon} - s_0$,}
\begin{align*}
	\nabla w_\varepsilon(z) & = (\partial_s \psi_\varepsilon) \left( \mathrm{sd}(z)+s_0,a \right) \nabla \mathrm{sd}(z) \\
	& = \frac{1}{\varepsilon} (\partial_s \psi) \left(\left( \mathrm{sd}(z)+\delta_0\right)/\varepsilon,a \right) \nabla \mathrm{sd}(z).
\end{align*}
Thus, for $z$ with $ -\sqrt{\varepsilon}+s_0 < \mathrm{sd}(z) <\sqrt{\varepsilon} - s_0 $, we see that
\[
	\left| \nabla w_\varepsilon(z) \right|^2
	 = \frac{1}{\varepsilon^2} \bigl| (\partial_s \psi) \left(\left( \mathrm{sd}(z)+s_0\right)/\varepsilon,a \right)\bigr|^2.
\]
Let $U_\varepsilon$ denote set
\[
U_\varepsilon = \left\{ z \in \Omega \bigm| -\sqrt{\varepsilon} + s_0 < \mathrm{sd}(z) < \sqrt{\varepsilon} - s_0 \right\}.
\]
Since $s_0$ is of order $\varepsilon^{3/2}$, the closure $\overline{U}_\varepsilon$ converges to $K$ in the sense of Hausdorff distance.
 We proceed
\begin{align*}
	E^0_\mathrm{sMM}&(w_\varepsilon,U_\varepsilon) 
	= \int_{U_\varepsilon} \left\{ \frac{\varepsilon}{2} |\nabla w_\varepsilon|^2 + \frac{1}{2\varepsilon} F(w_\varepsilon) \right\} \id\mathcal{L}^N \\
	& = \frac{1}{2\varepsilon} \int_{U_\varepsilon} \bigl| (\partial_s \psi) \left(\left( \mathrm{sd}(z)+s_0\right)/\varepsilon,a \right)\bigr|^2
	+ F \bigl(\psi \left(\left( \mathrm{sd}(z)+s_0\right)/\varepsilon,a \right)\bigr) \id\mathcal{L}^N(z) \\
	& = \frac{1}{\varepsilon} \int_{U_\varepsilon} F \bigl(\psi \left(\left( \mathrm{sd}(z)+s_0\right)/\varepsilon,a \right)\bigr) \id\mathcal{L}^N(z)
\end{align*}
by \eqref{SR}.
 To simplify the notation, we set
\[
	f_\varepsilon(t) = \frac{1}{\varepsilon} F \bigl(\psi \left((t+s_0)/\varepsilon,a \right)\bigr) 
\]
and observe that
\[
E^0_\mathrm{sMM} (w_\varepsilon,U_\varepsilon)
	= \int_{U_\varepsilon} f_\varepsilon \left(\mathrm{sd}(z)\right) \id\mathcal{L}^N(z)
	= \int^{\beta(\varepsilon)}_{-\beta(\varepsilon)} f_\varepsilon(t) H(t) \id t,  \quad \beta(\varepsilon) := \sqrt{\varepsilon}-s_0(\varepsilon)
\]
with $H(t):=\mathcal{H}^{N-1}\left(\left\{ z\in U_\varepsilon \bigm| d(z)=t \right\}\right)$ by the co-area formula.
 We set $A(t):=\mathcal{L}^N\left(\left\{ z\in U_\varepsilon \bigm| |\mathrm{sd}(z)| < t \right\}\right)$  and observe that $A(t)=\int^t_{-t} H(s)ds$ by the co-area formula.
 Integrating by parts, we observe that
	\begin{align*}
		\int^{\beta(\varepsilon)}_{-\beta(\varepsilon)} f_\varepsilon(t) H(t) \id t &= \int^{\beta(\varepsilon)}_0 f_\varepsilon(t) H(t) \id t + \int^0_{-\beta(\varepsilon)} f_\varepsilon(t) H(t) \id t\\
		&=\int^{\beta(\varepsilon)}_0 f_\varepsilon(t) \left( H(t) + H(-t) \right) \id t \\
		&= \int^{\beta(\varepsilon)}_0 f_\varepsilon(t) A'(t) \id t \\
		&= f_\varepsilon \left( \beta(\varepsilon) \right) A\left(\beta(\varepsilon) \right) - \int^{\beta(\varepsilon)}_0 f'_\varepsilon(t) A(t) \id t.
\end{align*}
By the relation of Minkowski contents and area \cite[Theorem 3.2.39]{Fe}, we know that
\[
	\lim_{t\downarrow 0} A(t)/2t = \mathcal{H}^{N-1}(K).
\]
In other words,
\[
	A(t) = 2 \left( \mathcal{H}^{N-1}(K) + \rho(t) \right)t
\]
with $\rho$ such that $\rho(t)\to 0$ as $t\to 0$.
 Thus,
\[
	- \int^{\beta(\varepsilon)}_0 f'_\varepsilon(t) A(t) \id t
	\leq - \int^{\beta(\varepsilon)}_0 f'_\varepsilon(t) 2t\id t \left( \mathcal{H}^{N-1}(K) + \max_{0\leq t\leq\beta(\varepsilon)} \rho(t)_+ \right)
\]
since $f'_\varepsilon(t)\leq 0$.
 Here we invoke (F2') so that $F'(\sigma)\leq0$ for $\sigma<1$.
 We thus observe that
\[
	E^\varepsilon_\mathrm{sMM}(w_\varepsilon,U_\varepsilon)
	\leq f_\varepsilon \left(\beta(\varepsilon)\right) A\left(\beta(\varepsilon)\right)
	- \int^{\beta(\varepsilon)}_0 f'_\varepsilon(t) 2t\id t \left( \mathcal{H}^{N-1}(K) + \max_{0\leq t\leq\beta(\varepsilon)} \rho(t)_+ \right).
\]
Integrating by parts yields
\[
	- \int^{\beta(\varepsilon)}_0 f'_\varepsilon(t) 2t\id t
	= 2 \int^{\beta(\varepsilon)}_0 f_\varepsilon(t) \id t
	-2 f_\varepsilon \left(\beta(\varepsilon)\right) \beta(\varepsilon).
\]
Since $\psi(s)=\psi(s,a)$ solves \eqref{SR}, we see
\begin{align*}
	f_\varepsilon(t-s_0) &= \frac{1}{\varepsilon} F \left(\psi(t/\varepsilon) \right) \\
	&= \frac{1}{\varepsilon} (\partial_s \psi) (t/\varepsilon)\sqrt{F\left(\psi(t/\varepsilon) \right)} \\
	&= -{\frac{\mathrm{d}}{\mathrm{d}t}} \bigl(G\left(\psi(t/\varepsilon) \right)\bigr).
\end{align*}
Thus
\[
	\int^{\beta(\varepsilon)}_0 f_\varepsilon(t) \id t
	= G \left(\psi(s_0/\varepsilon) \right) - G \left(\psi(1/\sqrt{\varepsilon}) \right).
\]
Since $s_0/\varepsilon\to 0$, $\psi(1/\sqrt{\varepsilon},a)\to 1$ as $\varepsilon\to 0$, we obtain
\[
	\lim_{\varepsilon\to 0} \int^{\beta(\varepsilon)}_0 f_\varepsilon(t) dt = G(a).
\]
Combing these manipulations, we obtain that
\begin{align*}
	\limsup_{\varepsilon\to 0}& E^\varepsilon_\mathrm{sMM} (w_\varepsilon,U_\varepsilon) \\
	&\leq \limsup_{\varepsilon\to 0} f_\varepsilon \left(\beta(s)\right) 
	\left\{ A\left(\beta(\varepsilon)\right)
	- 2\left(\mathcal{H}^{N-1} (K)-\max_{0\leq t\leq\beta(\varepsilon)} \left|\rho(t)\right|\right) \beta(\varepsilon) \right\} \\
	&\qquad+ 2\mathcal{H}^{N-1} (K) G(a)
\end{align*}
We thus conclude that
\[
	\limsup_{\varepsilon\to 0} E^\varepsilon_\mathrm{sMM} (w_\varepsilon,U_\varepsilon) \leq 2 \mathcal{H}^{N-1} (K) G(a)
\]
provided that
\[
	\lim_{\varepsilon\to0} f_\varepsilon \left(\beta(\varepsilon)\right) \beta(\varepsilon) < \infty
\]
since $\left(A(t)-2\mathcal{H}^{N-1}(K)t\right)\bigm/t=\rho(t)\to0$ as $t\to0$.
 This condition follows from the following lemma by setting $\varepsilon^{1/2}=\delta$.
 Indeed, we obtain a stronger result
\[
	\limsup_{\varepsilon\to0} f_\varepsilon\left(\beta(\varepsilon)\right) \beta(\varepsilon) \bigm/ \varepsilon^{1/2} < \infty.
\]
\begin{lemma} \label{ELPF}
Assume that $F$ satisfies (F1), (F2').
 Then, for $c\in\mathbf{R}$,
\[
	F \left(\psi(1/\delta,c)\right) \bigm/ \delta^2 \leq (1-c)^2 \
	\quad\text{for}\quad \delta > 0.
\]
\end{lemma}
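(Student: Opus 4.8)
The plan is to rewrite the asserted bound in a more tractable form. Putting $s:=1/\delta>0$, the inequality $F\bigl(\psi(1/\delta,c)\bigr)/\delta^{2}\le(1-c)^{2}$ is equivalent to
\[
	s\,\sqrt{F\bigl(\psi(s,c)\bigr)}\le|1-c|,
\]
so I would estimate the function $h(s):=s\sqrt{F\bigl(\psi(s,c)\bigr)}$ from above, using that $\psi(\cdot,c)$ satisfies \eqref{SR} together with the monotonicity condition (F2').

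If $c=1$ then $\psi(\cdot,c)\equiv1$ and there is nothing to prove, so assume $c\ne1$. Recall that $\psi(\cdot,c)$ is the $C^{1}$ function defined implicitly on $[0,s_{*})$ by the integral relations accompanying \eqref{SR}; it is strictly monotone, moving from $c$ toward $1$. Hence for every $s\in[0,s_{*})$ one has $|\psi'(s)|=\sqrt{F(\psi(s,c))}$, the sign of $\psi'(s)$ equals the sign of $1-c$, and equivalently of $1-\psi(s,c)$, and $\psi(s,c)$ lies between $c$ and $1$, so that $|\psi(s,c)-c|\le|1-c|$. When $s_{*}<\infty$ one continues by $\psi(\cdot,c)\equiv1$ past $s_{*}$, where the inequality is trivial; thus it suffices to treat $s<s_{*}$.

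Next I would differentiate. One has $h(0)=0$ and, for $s\in(0,s_{*})$ (where $F(\psi)>0$),
\[
	h'(s)=\sqrt{F(\psi)}+\frac{s}{2}\,\frac{F'(\psi)\,\psi'(s)}{\sqrt{F(\psi)}}=|\psi'(s)|+\frac{s}{2}\,F'\bigl(\psi(s,c)\bigr)\,\frac{\psi'(s)}{|\psi'(s)|}.
\]
Since $\operatorname{sign}\psi'(s)=\operatorname{sign}\bigl(1-\psi(s,c)\bigr)$, the correction term equals $-\tfrac{s}{2}F'(\psi)(\psi-1)/|\psi-1|$, which is $\le0$ by (F2'). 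Therefore $h'(s)\le|\psi'(s)|$, and integrating from $0$ to $s$ gives
\[
	h(s)\le\int_{0}^{s}|\psi'(u)|\id u=|\psi(s,c)-c|\le|1-c|,
\]
using the monotonicity of $\psi(\cdot,c)$. Substituting back $s=1/\delta$ and squaring yields the claim.

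The computation is routine once the set-up is fixed; the only points requiring attention are the smoothness of $\psi(\cdot,c)$, immediate from its implicit definition and from $F\in C^{1}$ (needed merely to write down $h'$), and the sign bookkeeping in the case $c>1$, where $\psi'=-\sqrt{F(\psi)}$ so the correction term in $h'$ reads $-\tfrac{s}{2}F'(\psi)$, still nonpositive because now $F'(\psi)\ge0$ for $\psi>1$. I expect no essential obstacle: the monotonicity assumption (F2') enters exactly once, precisely to force the correction term in $h'(s)$ to have the favorable sign.
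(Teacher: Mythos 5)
Your proof is correct. The sign bookkeeping checks out: with $|\psi'|=\sqrt{F(\psi)}$ and $\operatorname{sign}\psi'=\operatorname{sign}(1-\psi)$, the correction term in $h'(s)$ is $-\tfrac{s}{2}F'(\psi)(\psi-1)/|\psi-1|\le0$ precisely by (F2'), so $h'\le|\psi'|$ and integration (from $\epsilon$ to $s$ and letting $\epsilon\downarrow0$, since $h$ is only $C^1$ where $F(\psi)>0$) gives $h(s)\le|\psi(s,c)-c|\le|1-c|$, which is the claim after squaring.

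The paper reaches the same intermediate inequality $s\sqrt{F(\psi(s,c))}\le|\psi(s,c)-c|$ by a shorter route: it goes back to the defining relation $1/\delta=\int_c^{\psi(1/\delta)}F(z)^{-1/2}\,\id z$ (for $c<1$) and observes that, by (F2'), $F$ is non-increasing on $(-\infty,1]$, so the integrand is bounded above by $F\bigl(\psi(1/\delta)\bigr)^{-1/2}$ on the whole interval of integration; this gives $1/\delta\le\bigl(\psi(1/\delta)-c\bigr)/\sqrt{F(\psi(1/\delta))}$ in one line, and squaring finishes. So the paper estimates the defining integral directly, whereas you convert the problem into a differential inequality for $h(s)=s\sqrt{F(\psi(s,c))}$ and integrate it back up. Both arguments use (F2') at exactly one point and for the same purpose. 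The paper's version avoids differentiating $\sqrt{F(\psi)}$ altogether, so it needs no discussion of the regularity of $\psi$ or of the degenerate endpoint $s=0$; your version is slightly longer but makes transparent that the quantity being controlled is monotone in the relevant sense. No gap in either case.
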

\begin{proof}[Proof of Lemma \ref{ELPF}]
We may assume $c<1$ since the argument for $c>1$ is symmetric and the case $c=1$ is trivial.
 We write $\psi(s,a)$ by $\psi(s)$.
 By definition and monotonicity (F2') of $F$, we see
\[
	\frac{1}{\delta} = \int^{\psi(1/\delta)}_c \frac{1}{\sqrt{F(z)}} \id z
	\leq \frac{\psi(1/\delta)-c}{\sqrt{F\left(\psi(1/\delta)\right)}}.
\]
Taking the square of both sides, we end up with
\[
	F\left(\psi(1/\delta)\right) \bigm/ \delta^2
	\leq \left(\psi(1/\delta)-c\right)^2 \leq (1-c)^2.
\]
\end{proof}
 Let $V_{\varepsilon}$ denote the set 
	\begin{align*}
		V_{\varepsilon} := \left\{z \in \Omega \mid 2\sqrt{\varepsilon} < d(z) + s_0 < 4\sqrt{\varepsilon} \right\}.
\end{align*}
{We} observe that
	\begin{align*}
		E^0_\mathrm{sMM} (w_\varepsilon,V_\varepsilon) &= \frac{1}{\varepsilon}  \int_{V_\varepsilon} F \left(\psi \left(\frac{d(z) + s_0 -3\sqrt{\varepsilon}}{\varepsilon}, b \right) \right) \id\mathcal{L}^N(z) \\
		&= \frac{1}{\varepsilon}  \int_{2\sqrt{\varepsilon}-s_0}^{4\sqrt{\varepsilon}-s_0} F \left(\psi \left(\frac{t + s_0 -3\sqrt{\varepsilon}}{\varepsilon}, b \right) \right) H(t) \id t \\
		&= \int_{2\sqrt{\varepsilon}-s_0}^{4\sqrt{\varepsilon}-s_0} \tilde{f}_{\varepsilon}(t - 3\sqrt{\varepsilon}) H(t) \id t,
\end{align*}
 where $\tilde{f}_\varepsilon(t) := \frac{1}{\varepsilon} F \bigl(\psi \left((t+s_0)/\varepsilon,b \right)\bigr) .$ We set $\tilde{A}(t):=\mathcal{L}^N\left(\left\{ z\in V_\varepsilon \bigm| 0\leq d(z) < t \right\}\right)$  and observe that $ \tilde{A}(t)=\int^t_0 H(s)ds$ by the co-area formula. As before, we see
	\[
	\tilde{A}(t) =  \left( \mathcal{H}^{N-1}(K) + \rho(t) \right)t
	\]
	with $\rho$ such that $\rho(t)\to 0$ as $t\to 0$. We set 
	\[b(\varepsilon) :=  \tilde{f}_{\varepsilon}(\sqrt{\varepsilon} - s_0) \tilde{A}(4\sqrt{\varepsilon} - s_0) - \tilde{f}_{\varepsilon}(-\sqrt{\varepsilon} - s_0) \tilde{A}(2\sqrt{\varepsilon} - s_0),\]
	and observe that
	\begin{align*}
		E^0_\mathrm{sMM}(w_\varepsilon,V_\varepsilon) &\leq  b(\varepsilon) - \int_{2\sqrt{\varepsilon} - s_0}^{4\sqrt{\varepsilon} - s_0} \tilde{f}'_{\varepsilon}(t - 3\sqrt{\varepsilon}) t \id t \\
		&\qquad\times\left(\mathcal{H}^{N-1}(K) + \max_{2\sqrt{\varepsilon} - s_0 \leq t \leq 4\sqrt{\varepsilon} - s_0} \rho(t)_{+} \right).
\end{align*}

Integration by parts yields
	\[-\int_{2\sqrt{\varepsilon} - s_0}^{4\sqrt{\varepsilon} - s_0} \tilde{f}'_{\varepsilon}(t - 3\sqrt{\varepsilon}) t \id t = \int_{2\sqrt{\varepsilon} - s_0}^{4\sqrt{\varepsilon} - s_0} \tilde{f}_{\varepsilon} (t - 3\sqrt{\varepsilon}) \id t - 2 \sqrt{\varepsilon} \tilde{f}_{\varepsilon}(\beta(\varepsilon)), \]
and we see
	\[\int_{2\sqrt{\varepsilon} - s_0}^{4\sqrt{\varepsilon} - s_0} \tilde{f}_{\varepsilon} (t - 3\sqrt{\varepsilon}) \id t = 2 \int_{0}^{\beta(\varepsilon)} \tilde{f}_{\varepsilon}(t) \id t. \]
As before, we thus conclude that
\[
\limsup_{\varepsilon\to 0} E^\varepsilon_\mathrm{sMM} (w_\varepsilon,V_\varepsilon) \leq 2 \mathcal{H}^{N-1} (K) G(b).
\]

The part corresponding to $\psi(s,b)$ is similar, and the part where $\Psi_\varepsilon$ is linear will vanish as $\varepsilon\to 0$.
 So, we conclude
\[
	\lim_{\varepsilon\to 0} E^\varepsilon_\mathrm{sMM} (w_\varepsilon,\Omega) \leq E^0_\mathrm{sMM} (\Xi,\Omega).
\]
The term related to $\alpha$ is independent of $\varepsilon$ because of the choice of $s_0$ so that $w_\varepsilon(x)=\eta$ for $x\in K$.

Since $\mathcal{H}^{N-1}(K)<\infty$, by the co-area formula (Lemma \ref{CAR}), $K^1_{x,\nu}$ is a finite set for $\mathcal{L}^{N-1}$-a.e.\ $x\in\Omega_\nu$.
In the Hausdorff sense, $(S_\varepsilon)^1_{x,\nu}\to K^1_{x,\nu}$ holds, as observed in the following lemma for
\[
	S_\varepsilon = \left\{ y\in\mathbf{R}^N \bigm|
	d(y,K) = \varepsilon \right\}.
\]
Therefore, we observe that for $\mathcal{L}^{N-1}$-a.e.\ $x\in\Omega_\nu$,
\[
	\textstyle \limsup^* w_{\varepsilon,x,\nu}=b, \quad
	\liminf_* w_{\varepsilon,x,\nu}=a \ \text{on}\ K^1_{x,\nu}
\]
and outside $K^1_{x,\nu}$, $\limsup^* w_{\varepsilon,x,\nu}=\liminf_* w_{\varepsilon,x,\nu}=1$.
 We conclude that $w_{\varepsilon,x,\nu}$ converges to $\Xi_{x,\nu}$ in the graph sense on $\Omega^1_{x,\nu}$, which proves (i\hspace{-1pt}i).
\end{proof}
\begin{lemma} \label{HAUS}
Let $K$ be a compact set in a bounded open subset $\Omega$ of $\mathbf{R}^N$ and set
\[
	S_\varepsilon = \left\{ y \in \Omega \bigm| d(y,K) = \varepsilon \right\}.
\]
For $\nu\in S^{N-1}$, let $x\in\Omega_\nu$ be such that $K^1_{x,\nu}$ is a non-empty finite set.
 Then, $(S_\varepsilon)^1_{x,\nu}\to K^1_{x,\nu}$ in Hausdorff distance in $\mathbf{R}$ as $\varepsilon\to0$.
\end{lemma}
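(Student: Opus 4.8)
The plan is to verify the two one-sided bounds whose conjunction is Hausdorff convergence. Write $A_0:=K^1_{x,\nu}=\{s_1,\dots,s_m\}$, which is a finite set by hypothesis, and $A_\varepsilon:=(S_\varepsilon)^1_{x,\nu}$. The key object is $g(t):=d(x+t\nu,K)$, which is $1$-Lipschitz on $\mathbf{R}$; since $K$ is closed, $\{t\in\mathbf{R}\mid g(t)=0\}=\{t\mid x+t\nu\in K\}=A_0$, and by definition $t\in A_\varepsilon$ iff $g(t)=\varepsilon$ and $x+t\nu\in\Omega$. I will show that $\sup_{t\in A_\varepsilon}\operatorname{dist}(t,A_0)\to0$ and $\max_{1\le i\le m}\operatorname{dist}(s_i,A_\varepsilon)\to0$ as $\varepsilon\to0$; together these give $d_H(A_\varepsilon,A_0)\to0$.

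For the first bound I argue by contradiction. If it fails, there are $\eta>0$, $\varepsilon_n\downarrow0$ and $t_n\in A_{\varepsilon_n}$ with $\operatorname{dist}(t_n,A_0)\ge\eta$. Because $x+t_n\nu\in\Omega$ and $\Omega$ is bounded, $\{t_n\}$ lies in the bounded set $\Omega^1_{x,\nu}$, so along a subsequence $t_n\to t_*$. The Lipschitz bound gives $g(t_*)=\lim_n g(t_n)=\lim_n\varepsilon_n=0$, hence $t_*\in A_0$; but $\operatorname{dist}(t_n,A_0)\ge\eta$ and $t_n\to t_*$ force $\operatorname{dist}(t_*,A_0)\ge\eta>0$, a contradiction. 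This step uses only compactness of $K$ and boundedness of $\Omega$, not finiteness of $A_0$.

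For the second bound, fix $s_i\in A_0$. Since $A_0$ is finite, $s_i$ is isolated, and since $x+s_i\nu\in\Omega$ with $\Omega$ open, I may pick $r>0$ with $[s_i-r,s_i+r]\cap A_0=\{s_i\}$ and $\{x+t\nu\mid|t-s_i|\le r\}\subset\Omega$. For $0<\eta<r$ the set $\{t\mid\eta\le|t-s_i|\le r\}$ is compact and disjoint from $\{g=0\}=A_0$, so $m_\eta:=\min\{g(t)\mid\eta\le|t-s_i|\le r\}>0$. For any $\varepsilon\in(0,m_\eta)$ we have $g(s_i)=0<\varepsilon<m_\eta\le g(s_i+\eta)$, so by the intermediate value theorem there is $t_\varepsilon\in(s_i,s_i+\eta)$ with $g(t_\varepsilon)=\varepsilon$; since $x+t_\varepsilon\nu\in\Omega$, this gives $t_\varepsilon\in A_\varepsilon$ with $|t_\varepsilon-s_i|<\eta$. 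Hence $\limsup_{\varepsilon\to0}\operatorname{dist}(s_i,A_\varepsilon)\le\eta$ for every $\eta\in(0,r)$, so $\operatorname{dist}(s_i,A_\varepsilon)\to0$; in particular $A_\varepsilon\neq\emptyset$ for all small $\varepsilon$. Taking the maximum over the finitely many indices $i$ yields the second bound.

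I do not expect a serious obstacle: the only points needing care are that $\{g=0\}$ coincides exactly with $A_0$ (which is where closedness of $K$ enters), and that the intermediate-value construction stays inside $\Omega$ (arranged by shrinking $r$). Finiteness of $K^1_{x,\nu}$ is used precisely to make each $s_i$ isolated and to pass from the pointwise estimate $\operatorname{dist}(s_i,A_\varepsilon)\to0$ to the uniform one; without it one would have to worry about accumulation points of $A_0$ at which $g$ could vanish on a nondegenerate interval.
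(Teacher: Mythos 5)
Your proof is correct. The second half (density of $(S_\varepsilon)^1_{x,\nu}$ near each point of $K^1_{x,\nu}$) is essentially identical to the paper's argument: both of you exploit finiteness only to make each $s_i$ isolated, observe that $t\mapsto d(x+t\nu,K)$ is continuous, vanishes at $s_i$, is positive and bounded below off a small punctured neighbourhood, and apply the intermediate value theorem to produce a nearby solution of $g=\varepsilon$. Where you genuinely diverge is in the first inclusion: the paper disposes of it with the one-line claim $\sup_{y\in(S_\varepsilon)^1_{x,\nu}}d\bigl(y,K^1_{x,\nu}\bigr)\leq\varepsilon$, whereas you argue by compactness and the $1$-Lipschitz property of $g$. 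Your version is the safer one: the paper's inequality is not literally valid for a fixed $\varepsilon$, since a point $x+t\nu\in S_\varepsilon$ realizes its distance $\varepsilon$ to $K$ at a point of $K$ that need not lie on the line, so $t$ can be far from $K^1_{x,\nu}$ even though $d(x+t\nu,K)=\varepsilon$; what is true is only the asymptotic statement, and your subsequence argument (bounded $t_n$ by boundedness of $\Omega$, limit $t_*$ with $g(t_*)=0$, hence $t_*\in K^1_{x,\nu}$ by closedness of $K$) is exactly the clean way to prove it. So your route costs a compactness argument where the paper spends one line, but it buys a proof of the first inclusion that actually closes the gap left by the paper's too-quick estimate.
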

\begin{proof}[Proof of Lemma \ref{HAUS}]
If $(S_\varepsilon)^1_{x,\nu}$ is not empty, it is clear that
\[
	\sup_{y\in(S_\varepsilon)^1_{x,\nu}}
	d(y, K^1_{x,\nu}) \leq \varepsilon \to 0
\]
as $\varepsilon\to0$.
 It remains to prove that for any $t_0\in K^1_{x,\nu}$, there is a sequence $t_\varepsilon\in(S_\varepsilon)^1_{x,\nu}$ such that $t_\varepsilon\to t_0$ in $\mathbf{R}$.
 We set
\[
	f(\delta) = d \left(x+\nu(t_0 + \delta), K\right)
	\quad\text{for}\quad \delta>0.
\]
Since $t_0$ is isolated and $K$ is compact, we see that $f(\delta)>0$ for sufficiently small $\delta$, say $\delta<\delta_0$.
 Moreover, $f(\delta)$ is continuous on $(0,\delta_0)$ since $K$ is compact.
 Since $f(\delta)\leq\delta$, $f$ satisfies $f(\delta)\to0$ as $\delta\to0$.
 By the intermediate value theorem, for sufficiently small $\varepsilon$, say $\varepsilon\in(0,\varepsilon_0)$, there always exists $\delta(\varepsilon)$ such that $f\left(\delta(\varepsilon)\right)=\varepsilon$, which implies that
\[
	t_\varepsilon = t_0 + \delta(\varepsilon) \in (S_\varepsilon)^1_{x,\nu}.
\]
Since $\delta(\varepsilon)\to0$ as $\varepsilon\to0$, this implies $t_\varepsilon\to t_0$.
The proof is now complete.
\end{proof}
\begin{proof}[Proof of Theorem \ref{SUP}]
This follows from Lemma \ref{APP} and Lemma \ref{REC} by a diagonal argument.
\end{proof}
%

\section{Singular limit of the Kobayashi--Warren--Carter energy} \label{SLKWC} 

We first recall the Kobayashi--Warren--Carter energy.
 For a given $\alpha\in C(\mathbf{R})$ with $\alpha\geq 0$, we consider the Kobayashi--Warren--Carter energy of the form
\[
	E^\varepsilon_\mathrm{KWC}(u,v)
	= \int_\Omega \alpha(v) |Du| + E^\varepsilon_\mathrm{sMM}(v)
\]
for $u\in BV(\Omega)$ and $v\in H^1(\Omega)$.
 The first term is the weighted total variation of $u$ with weight $w=\alpha(v)$, defined by
\[
	\int_\Omega w|Du|
	:= \sup \left\{ -\int_\Omega u \operatorname{div}\varphi\, \id\mathcal{L}^N \Bigm|
	\left|\varphi(z)\right| \leq w(z)\ \text{a.e.}\ x,\ 
	\varphi\in C^1_c(\Omega) \right\}
\]
for any non-negative Lebesgue measurable function $w$ on $\Omega$.

We next define the functional, which turns out to be a singular limit of the Kobayashi--Warren--Carter energy.
For $\Xi\in\mathcal{A}_0(\Omega)$, let $\Sigma$ be its singular set in the sense that
\[
	\Sigma = \left\{ z\in\Omega \bigm|
	\Xi(z) \neq \{1\} \right\}.
\]
For $u\in BV(\Omega)$, let $J_u$ denote the set of its jump discontinuities.
 In other words, 
\[
	J_u = \left\{ z\in\Omega \backslash \Sigma_0 \bigm|
	j(z) := \left| u(z+0\nu) - u(z-0\nu) \right| > 0 \right\}.
\]
Here $\nu$ denotes the approximate normal of $J_u$, and $u(z\pm0\nu)$ denotes the trace of $u$ in the direction of $\pm\nu$.
 We consider a triplet $\mathcal{J}(u)={(J_u,j,\alpha)}$ and consider $E^{0,{\mathcal{J}}}_\mathrm{sMM}(\Xi,\Omega)$, whose explicit form is
\[
	E^{0,{\mathcal{J}}}_\mathrm{sMM}(\Xi,\Omega) 
	= E^0_\mathrm{sMM}(\Xi,\Omega)
	+ \int_{{J\cap\Sigma}} j \min_{\xi^-\leq\xi\leq\xi^+} \alpha(\xi)\, {\id\mathcal{H}^{N-1}},
\] 
where $\Xi(z)=\left[\xi^-(z),\xi^+(z)\right]$ for $z\in\Sigma$.
 We then define the limit Kobayashi--Warren--Carter energy:
\[
	E^0_\mathrm{KWC}(u,\Xi,\Omega)
	= \int_{\Omega\backslash J_u} \alpha(1) |Du|
	+ E^{0,\mathcal{J}(u)}_\mathrm{sMM}(\Xi,\Omega),
\]
in which the explicit representation of the second term is
\[
	E^{0,\mathcal{J}(u)}_\mathrm{sMM}(\Xi,\Omega)
	= E^0_\mathrm{sMM}(\Xi,\Omega)
	+ \int_{{J_u\cap\Sigma}} |u^+-u^-|\alpha_0(z)\, {\id\mathcal{H}^{N-1}}(z)
\]
with $u^\pm=u(z\pm0\nu)$ and
\[
	\alpha_0(z) := \min\left\{ \alpha(\xi) \bigm|
	\xi^-(z) \leq \xi \leq \xi^+(z) \right\}.
\]
Here $u^\pm$ are defined by
\begin{align*}
	u^+(x) &:= \inf \left\{ t \in \mathbf{R} \biggm|
	\lim_{r\to0} \frac{\mathcal{L}^{{N}}\left(B_r(x)\cap\{u>t\}\right)}{r^N}=0 \right\}, \\
	u^-(x) &:= \sup \left\{ t \in \mathbf{R} \biggm|
	\lim_{r\to0} \frac{\mathcal{L}^{{N}}\left(B_r(x)\cap\{u<t\}\right)}{r^N}=0 \right\},
\end{align*}
where $B_r(x)$ is the closed ball of radius $r$ centered at $x$ in $\mathbf{R}^N$.
 This is a measure-theoretic upper and lower limit of $u$ at $x$.
 If $u^+(x)=u^-(x)$, we say that $u$ is approximately continuous.
 For more detail, see \cite{Fe}.
 We are now in a position to state our main results rigorously.
\begin{theorem} \label{GKWC1}
Let $\Omega$ be a bounded domain in $\mathbf{R}^N$.
 Assume that $F$ satisfies (F1) and (F2) and that $\alpha\in C(\mathbf{R})$ is non-negative.
\begin{enumerate}
\item[(i)] (liminf inequality) Assume that $\{u_\varepsilon\}_{0<\varepsilon<1}\subset BV(\Omega)$ converges to $u\in BV(\Omega)$ in $L^1$, i.e., $\|u_\varepsilon-u\|_{L^1}\to0$.
 Assume that $\{u_\varepsilon\}_{0<\varepsilon<1}\subset H^1(\Omega)$.
 If $v_\varepsilon\xrightarrow{sg}\Xi$ and $\Xi\in\mathcal{A}_0$, then
\[
	E^0_\mathrm{KMC} (u,\Xi\ \Omega)
	\leq \liminf_{\varepsilon\to0} E^\varepsilon_\mathrm{KMC} (u_\varepsilon,v_\varepsilon).
\]
\item[(i\hspace{-1pt}i)] (limsup inequality) For any $\Xi\in\mathcal{A}_0$ and $u\in {BV(\Omega)}$, there exists a family of Lipschitz functions $\{w_\varepsilon\}_{0<\varepsilon<1}$ such that
\[
	E^0_\mathrm{KMC} (u,\Xi,\Omega)
	= \lim_{\varepsilon\to0} E^\varepsilon_\mathrm{KMC} (u,w_\varepsilon).
\]
\end{enumerate}
\end{theorem}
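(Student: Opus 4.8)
The plan is to prove the two inequalities separately; the equality in (ii) then follows by combining the recovery sequence it produces with the liminf inequality (i) applied to the stationary sequence $u_\varepsilon\equiv u$. For (i) the strategy is to reduce to the one‑dimensional Kobayashi--Warren--Carter liminf inequality of \cite{GOU} by the slicing argument already used to prove Theorem \ref{INF}, while for (ii) the strategy is to invoke Theorem \ref{SUP} with an appropriate triplet $\mathcal{J}$ and add the diffuse part of $|Du|$ by hand.

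For the liminf inequality (i): one may assume $\liminf_\varepsilon E^\varepsilon_{\mathrm{KWC}}(u_\varepsilon,v_\varepsilon)<\infty$ and, along a subsequence realizing it, that $E^\varepsilon_{\mathrm{sMM}}(v_\varepsilon)$ is bounded, so that $v_\varepsilon\to 1$ in measure. The jump set $J_u$ of $u\in BV(\Omega)$ is countably $N-1$ rectifiable, and so is $\Sigma$; by Lemma \ref{CR} decompose $\Sigma\setminus J_u$, $J_u\setminus\Sigma$ and $J_u\cap\Sigma$ each into an $\mathcal{H}^{N-1}$‑null part and disjoint compact $\delta$‑flat pieces $\{K_i\}$, using on $J_u\cap\Sigma$ that two rectifiable sets share the same approximate tangent plane $\mathcal{H}^{N-1}$‑a.e.\ on their intersection. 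Choose disjoint open sets $U_i\supset K_i$ and $\nu^i\in D$ almost normal to the hyperplane containing $K_i$. On each $U_i$ bound $E^\varepsilon_{\mathrm{KWC}}(u_\varepsilon,v_\varepsilon)$ below by the localized energy and slice in direction $\nu^i$: using $|\partial_t v_\varepsilon|\le|\nabla v_\varepsilon|$ on slices and $\int_{U_i}\alpha(v_\varepsilon)|Du_\varepsilon|\ge\int_{(U_i)_{\nu^i}}\big(\int\alpha(v_{\varepsilon,x,\nu^i})\,\id|D(u_{\varepsilon,x,\nu^i})|\big)\id\mathcal{L}^{N-1}(x)$, together with $u_{\varepsilon,x,\nu^i}\to u_{x,\nu^i}$ in $L^1$ by Fubini and the sliced graph convergence of $v_\varepsilon$ in the direction $\nu^i\in D$, the one‑dimensional liminf inequality of \cite{GOU} applies for $\mathcal{L}^{N-1}$‑a.e.\ $x$. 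Fatou's lemma and the area formula (with Jacobian $J^*\pi_{\nu^i}\ge(1+(2\delta)^2)^{-1/2}$, exactly as in the proof of Theorem \ref{INF}) bound the total contribution of the tubes below by $(1+(2\delta)^2)^{-1/2}$ times the integral over $\bigcup_iK_i$ of the expected limit density $2\{G(\xi^-)+G(\xi^+)\}+|u^+-u^-|\min_{\xi^-\le\xi\le\xi^+}\alpha$, with $\min_{\{1\}}\alpha=\alpha(1)$ off $\Sigma$. The complementary region $\Omega\setminus\bigcup_iU_i$, where $v_\varepsilon\to 1$ and $\Xi=\{1\}$ up to an $\mathcal{H}^{N-1}$‑null set, contributes $\alpha(1)$ times the diffuse part of $|Du|$ there, again by slicing (or by lower semicontinuity of the weighted total variation), and disjointness of the open sets prevents double counting. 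Summing over the finitely many tubes, exhausting the $K_i$ by monotone convergence, and letting $\delta\to 0$ gives $\liminf_\varepsilon E^\varepsilon_{\mathrm{KWC}}(u_\varepsilon,v_\varepsilon)\ge E^0_{\mathrm{KWC}}(u,\Xi,\Omega)$.

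For the limsup inequality (ii): one may assume $E^0_{\mathrm{KWC}}(u,\Xi,\Omega)<\infty$, equivalently $E^{0,\mathcal{J}(u)}_{\mathrm{sMM}}(\Xi,\Omega)<\infty$, with $\mathcal{J}(u):=(J_u,|u^+-u^-|,\alpha)$; here $J_u$ is countably $N-1$ rectifiable and $\int_{J_u}|u^+-u^-|\,\id\mathcal{H}^{N-1}=|D^ju|(\Omega)<\infty$, so (under the monotonicity hypothesis (F2') required by Theorem \ref{SUP}) that theorem produces Lipschitz $\{w_\varepsilon\}\subset H^1(\Omega)$ with $\operatorname{supp}(w_\varepsilon-1)$ shrinking to $\Sigma$, with $E^{\varepsilon,\mathcal{J}(u)}_{\mathrm{sMM}}(w_\varepsilon)\to E^{0,\mathcal{J}(u)}_{\mathrm{sMM}}(\Xi,\Omega)$, and with $w_\varepsilon\xrightarrow{sg}\Xi$. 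Since $w_\varepsilon$ is continuous, $\int_\Omega\alpha(w_\varepsilon)|Du|=\int_\Omega\alpha(w_\varepsilon)\,\id|Du|$, and splitting $|Du|$ into its part on $J_u$ (which is $|u^+-u^-|\,\mathcal{H}^{N-1}$ on $J_u$) and its part on $\Omega\setminus J_u$ (the absolutely continuous part plus the Cantor part $D^cu$) gives
\[
E^\varepsilon_{\mathrm{KWC}}(u,w_\varepsilon)=E^{\varepsilon,\mathcal{J}(u)}_{\mathrm{sMM}}(w_\varepsilon)+\int_{\Omega\setminus J_u}\alpha(w_\varepsilon)\,\id|Du|.
\]
For the last integral, $w_\varepsilon\to 1$ pointwise off $\Sigma$, and since $\Sigma$ is $\sigma$‑finite with respect to $\mathcal{H}^{N-1}$ one has $|D^cu|(\Sigma)=0$ while $\mathcal{L}^N(\Sigma)=0$, so the measure $|Du|$ restricted to $\Omega\setminus J_u$ assigns no mass to $\Sigma$; as $\{\alpha(w_\varepsilon)\}$ is uniformly bounded, dominated convergence yields $\int_{\Omega\setminus J_u}\alpha(w_\varepsilon)\,\id|Du|\to\alpha(1)|Du|(\Omega\setminus J_u)$. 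Hence $\limsup_\varepsilon E^\varepsilon_{\mathrm{KWC}}(u,w_\varepsilon)\le E^{0,\mathcal{J}(u)}_{\mathrm{sMM}}(\Xi,\Omega)+\alpha(1)\int_{\Omega\setminus J_u}|Du|=E^0_{\mathrm{KWC}}(u,\Xi,\Omega)$, and the reverse inequality is (i) with $u_\varepsilon\equiv u$, so equality holds; the sliced graph convergence $w_\varepsilon\xrightarrow{sg}\Xi$ comes directly from Theorem \ref{SUP}.

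The main obstacle is the weighted total variation part of the liminf inequality. One must choose the slicing directions $\nu^i\in D$ so as to be simultaneously almost normal to the $\delta$‑flat pieces of $J_u$ and of $\Sigma$ — possible on their intersection only because of the shared tangent plane — and must handle the diffuse part of $|Du|$ over $\Omega\setminus\bigcup_iU_i$ without double counting. The genuinely new difficulty is that $v_\varepsilon$ is controlled only through sliced graph convergence (for $\nu$ in the countable set $D$, and only for $\mathcal{L}^{N-1}$‑a.e.\ slice), and one must verify this is enough to recover the effective jump weight $\min_{\xi^-\le\xi\le\xi^+}\alpha$: on a.e.\ slice the graph of $(v_\varepsilon)_{x,\nu^i}$ near a crossing of $J_u\cap\Sigma$ must approach $[\xi^-,\xi^+]$, so that $\alpha((v_\varepsilon)_{x,\nu^i})\ge\min_{[\xi^-,\xi^+]}\alpha-o(1)$ there, while the one‑dimensional $BV$ lower semicontinuity forces the variation of $u_{\varepsilon,x,\nu^i}$ concentrating at that crossing to be at least $|u^+-u^-|$; reconciling a possibly oscillating or wandering dip of $v_\varepsilon$ with the concentration of $|Du_\varepsilon|$ is exactly what the one‑dimensional result of \cite{GOU} encodes, once the reduction to slices has been effected.
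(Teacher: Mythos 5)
Your overall architecture matches the paper's: part (ii) via Theorem \ref{SUP} with $\mathcal{J}(u)=(J_u,|u^+-u^-|,\alpha)$ plus dominated convergence for $\int_{\Omega\setminus J_u}\alpha(w_\varepsilon)|Du|$ (using that $|Du|\llcorner(\Omega\setminus J_u)$ does not charge $\Sigma$), and part (i) by decomposing $\Sigma$ into $\delta$-flat compact pieces, slicing in near-normal directions $\nu^i\in D$, applying the one-dimensional result of \cite{GOU} on a.e.\ slice, and reassembling with the area formula and Fatou. Your observation that (F2') is needed to invoke Theorem \ref{SUP} in part (ii) is a fair point about the hypotheses.

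There is, however, a genuine gap in your treatment of the diffuse part of $|Du|$ in the liminf inequality. Slicing in a \emph{fixed} direction $\nu$ only produces the lower bound
\[
\liminf_{\varepsilon\to0}\int_{W}\alpha(v_\varepsilon)\,|Du_\varepsilon|\;\ge\;\alpha(1)\int_{W\setminus\Sigma}\bigl|\langle Du,\nu\rangle\bigr|,
\]
and $|\langle Du,\nu\rangle|$ is in general strictly smaller than $|Du|$; your phrase ``again by slicing'' therefore does not reach $\alpha(1)\int_{\Omega\setminus J_u}|Du|$. The paper closes this gap with Lemma \ref{5VT}: the total variation $|\mu|(A)$ is the supremum of $|\langle\mu,\nu_k\rangle|(A)$ over direction fields $\nu_k:\Omega\to D$ that are constant on the cells of $\tau_k$-rectangular divisions, and one must run the whole liminf argument on each cell $\Omega\cap R^{\tau_k}_J$ separately, choosing the divisions so that $\mathcal{H}^{N-1}(\partial R^{\tau_k}_J\cap\Sigma)=0$ to avoid losing mass on the cell boundaries, before taking the supremum. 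Your parenthetical alternative, ``lower semicontinuity of the weighted total variation,'' does not rescue the argument either: the only control on the weight is sliced graph convergence of $v_\varepsilon$ to $\{1\}$ off $\Sigma$, which is compatible with $v_\varepsilon$ dipping to $0$ on balls shrinking to a point $z_0\notin\Sigma$ (a.e.\ slice eventually misses such a ball), so $\alpha(v_\varepsilon)$ need not converge uniformly, or even in any sense strong enough for a Reshetnyak-type semicontinuity, on compact subsets of $\Omega\setminus\Sigma$. The slice-by-slice argument is precisely what neutralizes such dips, but it must then be combined with the supremum over piecewise-constant directions; without an argument of the type of Lemma \ref{5VT} your proof of (i) only yields the anisotropic bound and is incomplete.
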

\begin{corollary} \label{GKWC2}
Assume the same hypotheses of Theorem \ref{GKWC1}.
 Assume that $f\in L^2(\Omega)$ and $\lambda\geq0$.
 Then the results of Theorem \ref{GKWC1} with $E_{\mathrm{KWC}}^0(u,\Xi,\Omega)$ and $E_{\mathrm{KWC}}^\epsilon(u,\Xi,\Omega)$ being replaced with
 \[
	E^0_\mathrm{KMC} (u,\Xi,\Omega) + \frac{\lambda}{2} \int_\Omega|u-f|^2 \id\mathcal{L}^N
	\quad\text{and}\quad
	E^\varepsilon_\mathrm{KMC} (u,v) + \frac{\lambda}{2} \int_\Omega|u-f|^2 \id\mathcal{L}^N, 
\]
respectively, still hold, provided that $u\in L^2(\Omega)$.
\end{corollary}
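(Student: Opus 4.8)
The plan is to reduce everything to Theorem~\ref{GKWC1} by treating the fidelity term $\frac{\lambda}{2}\int_\Omega|u-f|^2\id\mathcal{L}^N$ as a perturbation that behaves well separately under the two relevant convergences: it is lower semicontinuous along weak $L^2$ limits, and it is a fixed constant when the $u$-slot is held fixed. Since the case $\lambda=0$ is precisely Theorem~\ref{GKWC1}, I would assume $\lambda>0$ throughout.

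For the liminf inequality I would first dispose of the trivial case in which $\liminf_{\varepsilon\to0}\bigl[E^\varepsilon_\mathrm{KWC}(u_\varepsilon,v_\varepsilon)+\frac{\lambda}{2}\int_\Omega|u_\varepsilon-f|^2\id\mathcal{L}^N\bigr]=+\infty$. Otherwise, fix a sequence $\varepsilon_k\to0$ realizing this liminf; because $\alpha\ge0$ and $F\ge0$ force $E^\varepsilon_\mathrm{KWC}\ge0$, both $\int_\Omega|u_{\varepsilon_k}-f|^2\id\mathcal{L}^N$ and $E^{\varepsilon_k}_\mathrm{KWC}(u_{\varepsilon_k},v_{\varepsilon_k})$ are bounded. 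In particular $\{u_{\varepsilon_k}\}$ is bounded in $L^2(\Omega)$, so I can pass to a further subsequence along which $u_{\varepsilon_k}\rightharpoonup u$ weakly in $L^2$ (the weak limit must equal $u$ since $u_{\varepsilon_k}\to u$ in $L^1$, which in passing shows $u\in L^2(\Omega)$) and $E^{\varepsilon_k}_\mathrm{KWC}(u_{\varepsilon_k},v_{\varepsilon_k})$ converges. Weak lower semicontinuity of the convex map $w\mapsto\int_\Omega|w-f|^2\id\mathcal{L}^N$ then bounds the limit of $\frac{\lambda}{2}\int_\Omega|u_{\varepsilon_k}-f|^2\id\mathcal{L}^N$ from below by $\frac{\lambda}{2}\int_\Omega|u-f|^2\id\mathcal{L}^N$, while Theorem~\ref{GKWC1}(i) bounds the limit of $E^{\varepsilon_k}_\mathrm{KWC}(u_{\varepsilon_k},v_{\varepsilon_k})$ from below by $E^0_\mathrm{KWC}(u,\Xi,\Omega)$. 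Adding the two estimates, and recalling that a subsequential liminf dominates the full liminf, closes the liminf inequality.

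For the limsup inequality I would simply take the recovery sequence $\{w_\varepsilon\}\subset H^1(\Omega)$ furnished by Theorem~\ref{GKWC1}(ii) for the pair $(u,\Xi)$, which already satisfies $E^\varepsilon_\mathrm{KWC}(u,w_\varepsilon)\to E^0_\mathrm{KWC}(u,\Xi,\Omega)$. Since the $u$-slot is occupied by the fixed function $u\in BV(\Omega)\cap L^2(\Omega)$, the fidelity term $\frac{\lambda}{2}\int_\Omega|u-f|^2\id\mathcal{L}^N$ is a finite, $\varepsilon$-independent constant; adding it to both sides yields the claimed convergence with the \emph{same} recovery sequence and no new construction. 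The only genuinely non-automatic point — hence the main obstacle, though a mild one — is the mismatch between the $L^1$ hypothesis on $u_\varepsilon$ and the $L^2$ nature of the fidelity term in the liminf part; this is resolved exactly as above via the a priori $L^2$ bound coming from non-negativity of $E^\varepsilon_\mathrm{KWC}$, weak $L^2$ compactness, and uniqueness of the limit, so that none of the constructions in Sections~\ref{SLSC}--\ref{SLKWC} need modification.
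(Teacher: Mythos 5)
Your proposal is correct and follows essentially the same route as the paper: reduce everything to Theorem~\ref{GKWC1}, treat the fidelity term by lower semicontinuity in $u$ for the liminf inequality and by its $\varepsilon$-independence for the limsup inequality. The only (minor) difference is that the paper obtains $\|u-f\|_{L^2}^2\le\liminf_{\varepsilon\to0}\|u_\varepsilon-f\|_{L^2}^2$ directly from Fatou's lemma applied to an a.e.-convergent subsequence of the $L^1$-convergent $u_\varepsilon$, which bypasses your detour through the a priori $L^2$ bound, weak compactness, and convexity.
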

\begin{remark} \label{GKWC3}
\begin{enumerate}
\item[(i)] In a one-dimensional case, the liminf inequality here is weaker than \cite[Theorem 2.3 (i)]{GOU} because we assume $u\in BV(\Omega)$, not $u\in BV(\Omega\backslash\Sigma_0)$ with
\[
	\Sigma_0 = \left\{ x \in \Sigma \bigm|
	\alpha_0(z) = 0 \right\}.
\]
It seems possible to extend our results to this situation, but we did not try to avoid technical complications. 
\item[(i\hspace{-1pt}i)] It is clear that Corollary \ref{GKWC2} immediately follows from Theorem \ref{GKWC1} once we admit that $u_\varepsilon\to u$ in $L^1(\Omega)$ implies
\[
	\| u-f \|^2_{L^2} 
	\leq \liminf_{\varepsilon\to0} \| u_\varepsilon-f \|^2_{L^2}.
\]
The last lower semicontinuity holds by Fatou's lemma since $u_{\varepsilon'}\to u$\ $\mathcal{L}^N$-a.e.\ by taking a suitable subsequence.
\end{enumerate}
\end{remark}
%
\begin{proof}[Proof of Theorem \ref{GKWC1}]
Part (i\hspace{-1pt}i) follows easily from Theorem \ref{SUP}.
 Indeed, taking $w_\varepsilon$ in Theorem \ref{SUP} for $\mathcal{J}=\mathcal{J}(u)$, we see that
\[
	E^{0,{\mathcal{J}}}_\mathrm{sMM} (\Xi,\Omega)
	= \lim_{\varepsilon\to0} E^{\varepsilon,{\mathcal{J}}}_\mathrm{sMM} (w_\varepsilon).
\] 
Since
\[
	\int_\Omega \alpha(w_\varepsilon)|Du|
	= \int_{\Omega\backslash J_u} \alpha(w_\varepsilon)|Du|
	+ \int_{J_u} |u^+ - u^-| \alpha(w_\varepsilon){\id\mathcal{H}^{N-1}},
\]
it suffices to prove that
\[
	\lim_{\varepsilon\to0} \int_{\Omega\backslash J_u} \alpha(w_\varepsilon)|Du|
	= \int_{\Omega\backslash J_u} \alpha(1)|Du|.
\]
Similarly in the proof of Theorem \ref{SUP}, by a diagonal argument, we may assume that $w_\varepsilon$ is bounded.
 Since, by construction, $w_\varepsilon(z)\to1$ for $z\in\Omega\backslash\Sigma$ with a uniform bound for $\alpha(w_\varepsilon)$ and since
\[
	|Du| \left(\Sigma\cap(\Omega\backslash J_u) \right) = 0,
\]
the Lebesgue dominated convergence theorem yields the desired convergence.

It remains to prove (i).
 For this purpose, we recall a few properties of the measure $\langle Du,\nu\rangle$ for $u\in BV(\Omega)$, where $Du$ denotes the distributional gradient of $u$ and $\nu\in S^{N-1}$.
 The following disintegration lemma is found in \cite[Theorem 3.107]{AFP}.
\begin{lemma} \label{5DI}
For $u\in BV(\Omega)$ and $\nu\in S^{N-1}$,
\[
	\left|\langle Du,\nu\rangle\right| = (\mathcal{H}^{N-1} \lfloor \Omega_\nu) \otimes \left|Du_{x,\nu}|\right.
\]
In other words,
\[
	\int_\Omega \varphi \left|\langle Du,\nu\rangle\right|
	= \int_{\Omega_\nu} \left\{ \int_{\Omega^1_{x,\nu}} \varphi_{x,\nu} \left|Du_{x,\nu}\right| \right\}{\id\mathcal{H}^{N-1}}(x)
\]
for any bounded Borel function $\varphi\colon\Omega\to\mathbf{R}$.

\end{lemma}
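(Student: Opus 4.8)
The plan is to recognize this as the classical one-dimensional slicing theorem for $BV$ functions and to reduce it to a one-variable computation via Fubini, then upgrade from the signed directional derivative to its total variation. First I would fix orthonormal coordinates adapted to $\nu$, writing each $z\in\mathbf{R}^N$ as $z=x+t\nu$ with $x\in\Pi_\nu\cong\mathbf{R}^{N-1}$ and $t\in\mathbf{R}$; under this splitting Lebesgue measure factors as $\id\mathcal{L}^N=\id t\,\id\mathcal{H}^{N-1}(x)$ and $\langle Du,\nu\rangle$ is the distributional directional derivative $\partial_\nu u$. The proof then proceeds in two stages: establish the disintegration for the \emph{signed} measure $\langle Du,\nu\rangle$, and then show that passing to the total variation commutes with this disintegration.

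For the first stage I would begin by showing that $u_{x,\nu}\in BV(\Omega^1_{x,\nu})$ for $\mathcal{H}^{N-1}$-a.e.\ $x\in\Omega_\nu$. The key tool is the difference-quotient bound: mollifying $u$ and integrating $\partial_\nu u$ along segments gives, for every sufficiently small $h$,
\[
	\int_{\Omega_\nu}\left(\frac{1}{|h|}\int|u_{x,\nu}(t+h)-u_{x,\nu}(t)|\id t\right)\id\mathcal{H}^{N-1}(x)\le|\langle Du,\nu\rangle|(\Omega).
\]
Applying Fatou's lemma along a sequence $h_n\to0$ forces the inner liminf to be finite for a.e.\ $x$, which by the one-dimensional characterization of $BV$ yields $u_{x,\nu}\in BV(\Omega^1_{x,\nu})$ together with $\int_{\Omega_\nu}|Du_{x,\nu}|(\Omega^1_{x,\nu})\id\mathcal{H}^{N-1}(x)<\infty$ (integrability of $u_{x,\nu}$ itself being immediate from Fubini). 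Testing the defining relation of $\langle Du,\nu\rangle$ against $\varphi\in C^\infty_c(\Omega)$, rewriting $-\int_\Omega u\,\partial_\nu\varphi\id\mathcal{L}^N$ by Fubini as an iterated integral over the slices, and integrating by parts on each line (the boundary terms vanishing since $\varphi_{x,\nu}$ is compactly supported in $\Omega^1_{x,\nu}$) gives
\[
	\int_\Omega\varphi\id\langle Du,\nu\rangle=\int_{\Omega_\nu}\left(\int_{\Omega^1_{x,\nu}}\varphi_{x,\nu}\id Du_{x,\nu}\right)\id\mathcal{H}^{N-1}(x).
\]
By the Riesz representation theorem both sides are finite Radon measures agreeing on $C^\infty_c(\Omega)$, hence equal, and a routine approximation extends the identity to bounded Borel $\varphi$.

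For the second stage I would pass to total variations using the polar decomposition $\langle Du,\nu\rangle=\tau\,|\langle Du,\nu\rangle|$, where $\tau$ is a Borel function with $|\tau|=1$. The crucial observation is that for $\mathcal{H}^{N-1}$-a.e.\ $x$ the slice $\tau_{x,\nu}$ is the polar of $Du_{x,\nu}$, i.e.\ $|Du_{x,\nu}|=\tau_{x,\nu}\,Du_{x,\nu}$; this is checked by applying the signed disintegration with test functions supported on the Borel sets $\{\tau=1\}$ and $\{\tau=-1\}$, which shows that the positive and negative parts of $\langle Du,\nu\rangle$ disintegrate into the corresponding parts of the slice measures. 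Granting this, for any non-negative bounded Borel $\varphi$, replacing $|Du_{x,\nu}|$ by $\tau_{x,\nu}Du_{x,\nu}$ on each slice and invoking the signed disintegration with the bounded Borel test function $\varphi\tau$ gives
\[
	\int_{\Omega_\nu}\left(\int_{\Omega^1_{x,\nu}}\varphi_{x,\nu}\id|Du_{x,\nu}|\right)\id\mathcal{H}^{N-1}(x)=\int_\Omega\varphi\,\tau\id\langle Du,\nu\rangle=\int_\Omega\varphi\id|\langle Du,\nu\rangle|,
\]
using $\tau^2=1$, which is exactly the asserted disintegration of $|\langle Du,\nu\rangle|$; the general bounded Borel case then follows by linearity. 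I expect the one genuinely delicate point to be this polar-slicing step: one must ensure that a single globally defined Borel polar restricts consistently to $\mathcal{H}^{N-1}$-a.e.\ fiber, so that no measurable-selection machinery is needed. Everything else reduces to Fubini, integration by parts on a line, and the elementary one-dimensional theory of $BV$ functions.
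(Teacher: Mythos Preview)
The paper does not give its own proof of this lemma; it simply records it as \cite[Theorem~3.107]{AFP} and uses it as a black box. Your proposal is a correct outline of the standard argument one finds in that reference: establish a.e.\ slice $BV$-regularity via difference quotients and Fatou, obtain the signed disintegration $\langle Du,\nu\rangle=(\mathcal{H}^{N-1}\lfloor\Omega_\nu)\otimes Du_{x,\nu}$ by Fubini and one-dimensional integration by parts, then pass to total variations. The polar-decomposition route you sketch for the last step is valid, though the more common textbook argument pairs the inequality $\int_{\Omega_\nu}|Du_{x,\nu}|(\Omega^1_{x,\nu})\,\mathrm{d}\mathcal{H}^{N-1}\le|\langle Du,\nu\rangle|(\Omega)$ (already obtained from the difference-quotient bound) with the reverse inequality coming from the signed disintegration and the duality formula for the total variation, and then localizes; once the total masses agree on open sets, the full identity of measures follows. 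Either way, there is no proof in the paper to compare against---your sketch is essentially the proof of the cited theorem.
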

We also need a representation of the total variation of a vector-valued measure and its component.
 Let $\tau > 0$ and monotone increasing sequence $(a_j)_{j \in \mathbf{Z}}$ such that $a_{j+1} < a_j + \tau$ be given. We consider a division of $\mathbf{R}^N$ into a family of rectangles of the form
\[
	R^\tau_{J,(a_j)} = \prod^N_{i=1}[a_{j_i},a_{j_i+1}), \quad 
	J = (j_1, \ldots, j_N) \in \mathbf{Z}^N 
\]
 We say that the division $\{R^\tau_{J,(a_j)}\}_{J \in \mathbf{Z}^N}$ is a $\tau$-rectangular division associated with $(a_j)$.
 {Hereafter, we may omit $(a_j)$ and write $\{R_J^\tau\}_{J\in\mathbb{Z}^N}$ in short.}
\begin{lemma} \label{5VT}
Let $\mu$ be an $\mathbf{R}^d$-valued finite Radon measure in a domain $\Omega$ in $\mathbf{R}^N$.
 Let $\{\tau_k\}$ be a decreasing sequence converging to zero as $k\to\infty$.
 Let $\{R^{\tau_k}_J\}_J$ be a fixed $\tau_k$-rectangular division of $\mathbf{R}^N$.
 Let $D$ be a dense subset of $S^{N-1}$.
 Then
\[
	|\mu|(A) = \sup \left\{ \left|\langle \mu,\nu_k \rangle\right|(A) \bigm|
	\nu_k : \Omega\to D\ \text{is constant on}\ {R^{\tau_k}_J \cap \Omega,\ J \in \mathbf{Z}^N,}\ k=1,2,\ldots \right\},
\]
where $A$ is a Borel set.
\end{lemma}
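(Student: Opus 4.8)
The plan is to prove the two inequalities separately. The inequality $|\mu|(A) \ge \left|\langle\mu,\nu\rangle\right|(A)$ for any single measurable unit vector field $\nu$ is immediate from the definition of total variation of a vector measure: for any Borel partition $\{A_i\}$ of $A$ one has $\sum_i |\langle\mu(A_i),\nu(A_i)\rangle| \le \sum_i |\mu(A_i)|$, and passing to the supremum over partitions gives the claim (the fact that $\nu$ is piecewise constant on the rectangles is irrelevant here, only measurability matters). Taking the supremum over all admissible $(\nu_k)_k$ yields ``$\ge$'' in the statement. So the real content is the reverse inequality
\[
	|\mu|(A) \le \sup_{(\nu_k)} \left|\langle\mu,\nu_k\rangle\right|(A).
\]

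For the reverse inequality, first recall the polar decomposition $\mu = \theta\,|\mu|$ with $\theta\colon\Omega\to S^{d-1}$ a Borel map, so that $|\mu|(A) = \int_A \langle\theta,\theta\rangle\id|\mu| = \int_A \langle \tfrac{\mathrm{d}\mu}{\mathrm{d}|\mu|},\theta\rangle\id|\mu|$. If we had an exact matching unit vector field we would be done, but we are only allowed vectors from the dense set $D$ that are constant on the rectangles $R^{\tau_k}_J$; so the strategy is to approximate $\theta$. Fix $\eta>0$. Since $|\mu|$ is a finite Radon measure, $\theta$ is $|\mu|$-measurable, so by Lusin's theorem there is a compact set $C\subset\Omega$ with $|\mu|(\Omega\setminus C)<\eta$ on which $\theta$ is continuous, hence uniformly continuous. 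Choose $k$ large enough that the mesh $\tau_k$ is smaller than the modulus of continuity scale, so that on each rectangle $R^{\tau_k}_J$ meeting $C$ the oscillation of $\theta$ is small; pick a point in that rectangle, take a vector in $D$ within distance $\eta$ of the value of $\theta$ there, and define $\nu_k$ to equal that vector on $R^{\tau_k}_J\cap\Omega$ (and anything admissible elsewhere). Then $|\theta(z)-\nu_k(z)|$ is small, say $\le c\eta$, for $|\mu|$-a.e.\ $z\in C$. Consequently
\[
	\left|\langle\mu,\nu_k\rangle\right|(A) \ge \left|\langle\mu,\nu_k\rangle\right|(A\cap C) = \int_{A\cap C}\langle\theta,\nu_k\rangle\id|\mu| \ge \int_{A\cap C}\bigl(1-c\eta\bigr)\id|\mu| \ge |\mu|(A) - \eta - c\eta\,|\mu|(\Omega),
\]
using $\langle\theta,\nu_k\rangle = 1 - \tfrac12|\theta-\nu_k|^2 \ge 1 - \tfrac12 c^2\eta^2 \ge 1 - c\eta$ for small $\eta$. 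Letting $\eta\to0$ gives the reverse inequality and hence the lemma.

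The main obstacle, and the point that needs care, is the bookkeeping of the approximating sequence: the statement quantifies over sequences $(\nu_k)_{k\ge1}$ of vector fields, one for each mesh scale $\tau_k$, rather than over a single field, so one must be slightly careful about what ``$\nu_k$ is constant on $R^{\tau_k}_J\cap\Omega$'' buys us. In fact it buys nothing beyond measurability for the ``$\ge$'' direction and, for the ``$\le$'' direction, it is exactly what lets us exploit uniform continuity of $\theta$ on the Lusin set at a fixed small scale $\tau_k$; the other $\nu_j$ for $j\ne k$ can be chosen arbitrarily (admissibly) since the supremum only needs one good member of the family. A secondary technical point is that $D$ is only dense in $S^{d-1}$, not all of it, which forces the extra $\eta$ in the approximation $|\theta(\text{point}) - \nu_k| \le \eta$; this is harmless. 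One should also note the trivial reduction that it suffices to prove the identity for $A=\Omega$ restricted to arbitrary Borel subsets, or equivalently to argue with $|\mu|\lfloor A$ in place of $\mu$ throughout, which streamlines the Lusin step.
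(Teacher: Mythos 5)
Your proposal is correct, but it proves the nontrivial inequality by a different mechanism than the paper. You work with the polar decomposition $\mu=\theta\,|\mu|$ and approximate the Borel density $\theta$ itself, invoking Lusin's theorem to get a compact set $C$ with $|\mu|(\Omega\setminus C)<\eta$ on which $\theta$ is uniformly continuous, and then exploiting the small mesh $\tau_k$ together with the density of $D$. The paper instead first reduces to $A$ open by Radon regularity, takes a near-optimal continuous test function $\varphi\in C_c(A)$ with $\|\varphi\|_\infty\le1$ from the duality representation of $|\mu|(A)$, and approximates $\varphi$ (not $\theta$) by $c_k\nu_k^\delta$ with $\nu_k^\delta$ piecewise constant with values in $D$, using only the uniform continuity of $\varphi$; no Lusin argument and no polar decomposition are needed, but the reduction to open $A$ is. Your route has the small advantage of working directly with an arbitrary Borel set $A$, at the cost of the extra measure-theoretic input. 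Two cosmetic points: the equality $\left|\langle\mu,\nu_k\rangle\right|(A\cap C)=\int_{A\cap C}\langle\theta,\nu_k\rangle\id|\mu|$ should be the inequality ``$\geq$'' (the variation of $\langle\mu,\nu_k\rangle$ has density $\left|\langle\theta,\nu_k\rangle\right|$, not $\langle\theta,\nu_k\rangle$), which is the direction you need anyway; and the reference point you choose in each rectangle $R^{\tau_k}_J$ must be taken in $R^{\tau_k}_J\cap C$ so that the uniform continuity of $\theta$ restricted to $C$ applies. Neither affects the validity of the argument.
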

We postpone its proof to the end of this section.

We shall prove (i).
 We recall the decomposition of $\Sigma$ into a countable disjoint union of $\delta$-flat compact sets $K_i$ up to $\mathcal{H}^{N-1}$-measure zero set, and take the corresponding ${\nu^i\in D}$ as in Theorem \ref{INF}.
 We use the notation in Theorem \ref{INF}.
 We may assume that $\bigcap^\infty_{m=1}U^m_i=K_i$.
 By Lemma \ref{5DI}, we proceed
\begin{align*}
	\int_{U^m_i} \alpha(v_\varepsilon)\left|Du_\varepsilon\right|
	&\geq \int_{U^m_i} \alpha(v_\varepsilon)\left|\langle Du_\varepsilon, \nu^i\rangle\right|\\
	&= \int_{(U^m_i)_{\nu^i}} \left\{ \int_{(U^m_i)^1_{x,\nu^i}} \alpha(v_{\varepsilon,x,\nu^i})\left| Du_{\varepsilon,x,\nu^i} \right| \right\}{\id\mathcal{H}^{N-1}}(x).
\end{align*}
By one dimensional result \cite[Lemma 5.1]{GOU}, we see that
\begin{align*}
	&\liminf_{\varepsilon\to0} \int_{(U^m_i)^1_{x,\nu^i}} \alpha(v_{\varepsilon,x,\nu^i})\left|Du_{\varepsilon,x,\nu^i}\right| \\
	&\geq \int_{(U^m_i\backslash\Sigma)^1_{x,\nu^i}} \alpha(1)\left|Du_{x,\nu^i}\right|
	+ \sum_{t\in(\Sigma\cap U^m_i)^1_{x,\nu^i}} \left( \min_{\xi^-_{x,\nu^i}\leq\xi\leq\xi^+_{x,\nu^i}} \alpha(\xi) \right)
	\left|u^+_{x,\nu^i}-u^-_{x,\nu^i}\right|(t).
\end{align*}
(In \cite[Lemma 5.1]{GOU}, $\alpha(v)$ is taken as $v^2$, but the proof works for general $\alpha$.
 In \cite[Lemma 5.1]{GOU}, $|\xi^-_i|^2$ should be $\left((\xi^-_i)_+\right)^2$.)
 The last term is bounded from below by
\[
	\alpha_0 \left(x + t^i_x \nu^i\right) \left|u^+ - u^-\right| \left(x + t^i_x \nu^i\right)
\]
since $(K^m_i)^1_{x,\nu^i}$ (${\subset \left(\Sigma\cap U^m_i \right)^1_{x,\nu^i} }$) is a singleton $\{t^i_x\}$. 
 By the area formula, we see
\begin{align}
 	\int_{K^m_i} &\alpha_0 \left|u^+ - u^-\right|{\id\mathcal{H}^{N-1}}\\
	&\leq \sqrt{1+(2\delta)^2} \int_{(K^m_i)_{\nu^i}} \alpha_0 \left(x + t^i_x \nu^i\right) \left|u^+ - u^-\right|  \left(x + t^i_x \nu^i \right){\id\mathcal{H}^{N-1}} (x).
\end{align}
Combining these observations, by Fatou's lemma, we conclude that 
\[
	\liminf_{\varepsilon\to0} \int_{U^m_i} \alpha\left(v_\varepsilon\right) \left|Du_\varepsilon\right|
	\geq \frac{1}{\sqrt{1+(2\delta)^{2}}} \int_{K^m_i} \alpha_0 \left|u^+ - u^-\right|{\id\mathcal{H}^{N-1}}. 
\]
Adding from $i=1$ to $m$, we conclude that
\[
	\liminf_{\varepsilon\to0} \int_{V^m} \alpha\left(v_\varepsilon\right) \left|Du_\varepsilon\right|
	\geq \frac{1}{\sqrt{1+(2\delta)^{2}}} \int_{\Sigma^m} \alpha_0 \left|u^+ - u^-\right|{\id\mathcal{H}^{N-1}} 
\]
for $V^m=\bigcup^m_{i=1}U^m_i$.

For $W^m=\Omega\backslash V^m$, we take $\nu\in D$ and argue in the same way to get
\begin{align*} 
	\liminf_{\varepsilon\to0}& \int_{W^m} \alpha(v_\varepsilon)\left|Du_\varepsilon\right|\\
	&\geq \int_{(W^m)_\nu} \Biggl\{ \int_{(W^m\backslash\Sigma)^1_{x,\nu}} \alpha(1)\left|Du_{x,\nu}\right| \\
	&\qquad+ \sum_{t\in(\Sigma\cap W^m)^1_{x,\nu}} \left( \min_{\xi^-_{x,\nu}\leq\xi\leq\xi^+_{x,\nu}} \alpha(\xi) \right)
	\left|u^+_{x,\nu}-u^-_{x,\nu}\right|(t) \Biggr\}{\id\mathcal{H}^{N-1}}(x) \\
	&\geq \alpha(1) \int_{(W^m)_\nu} \left\{ \int_{(W^m\backslash\Sigma)^1_{x,\nu}} \left|Du_{x,\nu}\right| \right\}{\id\mathcal{H}^{N-1}}(x) \\
	&= \alpha(1) \int_{W^m\backslash\Sigma} \left|\langle Du, \nu \rangle\right|.
\end{align*}
The last equality follows from Lemma \ref{5DI}.
 Since $W^m\cap\Sigma_m=\emptyset$, combining the estimate of the integral on $V^m$, we now observe that
\begin{align*}
	\liminf_{\varepsilon\to0}& \int_\Omega \alpha(v_\varepsilon)\left|Du_\varepsilon\right|\\
	&\geq \liminf_{\varepsilon\to0} \int_{W^m\backslash(\Sigma\backslash\Sigma_m)} \alpha(v_\varepsilon) \left|\langle Du,\nu \rangle\right| 
	+ \liminf_{\varepsilon\to0} \int_{V^m} \alpha(v_\varepsilon) \left| Du_\varepsilon \right| \\ 
	&\geq \alpha(1) \int_{W^m\backslash(\Sigma\backslash\Sigma_m)} \left|\langle Du,\nu \rangle\right|
	+ \frac{1}{\sqrt{1+(2\delta)^2}} \int_{\Sigma_m} \alpha_0 \left| u^+ - u^- \right|{\id\mathcal{H}^{N-1}}. 
\end{align*}
Passing $m$ to $\infty$ yields
\[
	\liminf_{\varepsilon\to0} \int_\Omega \alpha(v_\varepsilon)\left|Du_\varepsilon\right|
	\geq \alpha(1) \int_{\Omega\backslash\Sigma} \left|\langle Du,\nu \rangle\right|
	+ \frac{1}{\sqrt{1+(2\delta)^2}} \int_\Sigma \alpha_0 \left| u^+ - u^- \right|{\id\mathcal{H}^{N-1}}
\]
by Fatou's lemma.
 Since $\delta>0$ can be taken arbitrarily, we now conclude that
\[
	{\liminf_{\varepsilon\to0}} \int_\Omega \alpha(v_\varepsilon)\left|Du_\varepsilon\right|
	\geq \alpha(1) \int_{\Omega\backslash\Sigma} \left|\langle Du,\nu \rangle\right|
	+ \int_{\Omega\cap\Sigma} \alpha_0 \left| u^+ - u^- \right|{\id\mathcal{H}^{N-1}}.
\]
For any $\nu \in D$, we may replace $\Omega$ with an open set in $\Omega$, for example, $\Omega_0 \cap \Omega$ where $\Omega_0$ is an open rectangle.
 Applying the co-area formula (or Fubini's theorem) to the projection $(x_1,\ldots,x_N)\longmapsto x_i$, we have $\mathcal{H}^{N-1}\left(\Sigma\cap\{x_i=q\}\right)=0$ for $\mathcal{L}^1$-a.e.\ $q$, since otherwise, $\mathcal{L}^N(\Sigma)>0$. 
 Thus, for any $\tau>0$, there is a $\tau$-rectangular division $\{R^\tau_J\}_J$ with $\mathcal{H}^{N-1}({\partial R^\tau_J\cap\Sigma})=0$.
 Since $\mathcal{H}^{N-1}({\partial R^\tau_J\cap\Sigma})=0$, by dividing $\Omega$ into ${\{\Omega\cap R^\tau_J\}_J}$, we conclude that
\[
	{\liminf_{\varepsilon\to0}} \int_\Omega \alpha(v_\varepsilon)\left|Du_\varepsilon\right|
	\geq \alpha(1) \int_{\Omega\backslash\Sigma} \left|\left\langle Du,\nu(x) \right\rangle\right|
	+ \int_{\Omega\cap\Sigma} \alpha_0 \left| u^+ - u^- \right|{\id\mathcal{H}^{N-1}}
\]
where $\nu:\Omega\to D$ is a constant on each rectangle.
Applying Lemma \ref{5VT}, we now conclude that
\[
	{\liminf_{\varepsilon\to0}} \int_\Omega \alpha(v_\varepsilon)\left|Du_\varepsilon\right|
	\geq \alpha(1) \int_{\Omega\backslash\Sigma} |Du|
	+ \int_\Omega \alpha_0 \left| u^+ - u^- \right|{\id\mathcal{H}^{N-1}}.
\]
Since we already obtained
\[
	\liminf_{\varepsilon\to0} {E^\varepsilon_\mathrm{sMM}} (v_\varepsilon)
	\geq {E^0_\mathrm{sMM}} (\Xi,\Omega)
\]
by Theorem \ref{INF} and since
\[
	E^\varepsilon_\mathrm{KWC} (v) = {E^\varepsilon_\mathrm{sMM} (v)}
	+ \int_\Omega \alpha(v) |Du|,
\]
the desired liminf inequality follows. 
\end{proof} 

\begin{proof}[Proof of Lemma \ref{5VT}]
We may assume that $A$ is open since $\mu$ is a Radon measure.
 By duality representation,
\[
	|\mu|(A) = \sup \left\{ \sum^d_{i=1} \int_A \varphi_i \id\mu_i \biggm|
	\varphi = (\varphi_1, \ldots, \varphi_d) \in C_c(A),\ 
	\|\varphi\|_{L^\infty} \leq 1 \right\},
\]
where $C_c(A)$ denotes the space of ($\mathbf{R}^d$-valued) continuous functions compactly supported in $A$ and $\|\varphi\|_\infty:=\sup_{x\in\Omega} \left|\varphi(x)\right|$ with the Euclidean norm $|a|=\langle a,a\rangle^{1/2}$ for $a\in\mathbf{R}^d$.
 Since $\mu(A)<\infty$, by this representation, we see that for any $\delta>0$, there exists $\varphi\in C_c(A)$ with $\|\varphi\|_\infty\leq1$ satisfying
\[
	|\mu|(A) \leq \sum^d_{i=1} \int_A \varphi_i \id\mu_i + \delta.
\]
Since $\varphi$ is uniformly continuous in $A$ and $D$ is dense, for sufficiently large $k$, there is $\tau_k$-rectangular division $\{R^{\tau_k}_J\}$ and $\nu^\delta_k:\Omega\to D$, which is constant on $R^{\tau_k}_J\cap\Omega$ such that
\[
	\left| \varphi - \nu^\delta_k c_k \right| < \delta
	\quad\text{in}\quad R^{\tau_k}_J \cap \Omega
\]
with some constant $0\leq c_k\leq1$. 
This inequality implies that 
\begin{align*}
	\sum^d_{i=1} \int_A \varphi_i \id\mu_i &\leq {\sum_J} {\int_{R^{\tau_k}_J\cap A}} c_k  \langle \mu, \nu^\delta_k \rangle + \delta |\mu|(A) \\
	&\leq \left| \langle\mu,\nu^\delta_k \rangle\right| (A) +\delta |\mu|(A).
\end{align*}
Thus we obtain that
\[
	|\mu|(A) \leq \left| \langle\mu,\nu^\delta_k \rangle\right| (A) + \delta + \delta |\mu|(A).
\]
Hence, by $\mu(A)<\infty$ and the arbitrariness $\delta>0$, we have 
\begin{multline*}
	|\mu|(A)	\leq \sup \left\{ \left| \langle\mu,\nu_k \rangle\right| (A) \bigm|
	\nu_k : \Omega \to D,\ \nu_k\ \text{is constant on}\right.\\
	\left.\ {R^{\tau_k}_J \cap \Omega,\ J \in \mathbf{Z}^N,}\ k=1,2,\ldots \right\}. 
\end{multline*}
The reverse inequality is trivial, so the proof is now complete. 
\end{proof}

\section*{Acknowledgments}
The work of the second was supported by the Program for Leading Graduate Schools, MEXT, Japan.
The work of the first author was partly supported by the Japan Society for the Promotion of Science through the grants KAKENHI No.~19H00639, No.~18H05323, No.~17H01091, and by Arithmer Inc.~and Daikin Industries, Ltd.~through collaborative grants. 
The work of the third author was partly supported by the Japan Society for the Promotion of Science through the grants KAKENHI No.~18K13455 {and No.~22K03425}.

\end{document}